\PassOptionsToPackage{capitalize}{cleveref} 
\documentclass{article}

\usepackage{arxiv} 
\usepackage[utf8]{inputenc} 
\usepackage[T1]{fontenc}    
\usepackage{hyperref}       
\usepackage{url}            
\usepackage{booktabs}       
\usepackage{amsfonts}       
\usepackage{nicefrac}       
\usepackage{microtype}      
\usepackage{graphicx}       
\usepackage{doi}            

 \usepackage{etex}
\usepackage{lipsum}
\usepackage{amsfonts}
\usepackage{graphicx}
\usepackage{epstopdf} 
\usepackage{enumerate} 
 \usepackage{subfig,bbm,enumerate,multirow,framed,blkarray}

\usepackage{verbatim,tcolorbox}
  \usepackage{tikz}
\usetikzlibrary{shapes,arrows}
\usetikzlibrary{positioning}  
\usepackage{pstricks-add} 
 \usepackage{wrapfig}
 \usepackage{pgfplots}
  
\usepackage{color}
\usepackage{savesym}

\newcommand{\conv}{\text{conv}}

\newcommand{\xnot}{\x_0}

\newcommand{\ynot}{\mathbf{y}_0}

\newcommand{\coord}{\mathbf{e}} 
\newcommand{\E}{\mathbf{E}} 

\newcommand{\real}{\mathbb R}

\newcommand{\zz}{\mathbf{z}}
\newcommand{\w}{\mathbf{w}}

\newcommand{\uu}{\mathbf{u}}
\newcommand{\vv}{\mathbf{v}}

\newcommand{\f}{\mathbf{f}}

\newcommand{\F}{\mathbf{F}}

\newcommand{\g}{\mathbf{g}}
\newcommand{\dd}{\mathbf{d}}
\newcommand{\h}{\mathbf{h}}
\newcommand{\x}{\mathbf{x}}

\newcommand{\y}{\mathbf{y}}
\newcommand{\M}{\mathbf{M}}
\newcommand{\I}{\mathbf{I}}

\newcommand{\II}{\mathbf{I}}

\newcommand{\GG}{\mathbf{G}}
\newcommand{\A}{\mathbf{A}}

\newcommand{\m}{\mathbf{m}}
\newcommand{\N}{\mathbf{N}}

\newcommand{\rr}{\mathbf{r}}

\newcommand{\capu}{\mathbf{U}}
\newcommand{\capv}{\mathbf{V}}

\newcommand{\zero}{\mathbf{0}}

\newcommand{\Jf}{\mathbf{Jf}}
\newcommand{\J}{\mathbf{J}}
\newcommand{\JL}{\mathbf{J}^{\rm L}}

\newenvironment{bsmallmatrix}{\left[\begin{smallmatrix}}{\end{smallmatrix}\right]}

\usepackage{amsmath,amsthm,amssymb}

\usepackage{bm}
\usepackage{amsfonts}
\usepackage{enumitem}
\usepackage[mathscr]{euscript}
\usepackage{xcolor}
\usepackage{url}
\usepackage{soul}
\usepackage{subcaption}
\usepackage{graphicx}
\usepackage[skip=10pt plus 2pt minus 2pt]{parskip}

\usepackage{subfiles}
\usepackage{cite}
\usepackage{amsmath,amssymb,amsfonts}
\hypersetup{
    colorlinks=true,  
    linkcolor=black,  
    citecolor=blue,  
    urlcolor=black    
}
\usepackage{textcomp}
\usepackage{float}

\newcommand{\bgHat}{\hat{\mathbf{g}}}

\newtheoremstyle{spacedstyle}
  {12pt plus 3pt minus 3pt}
  {12pt plus 3pt minus 3pt}
  {\itshape}
  {}
  {\bfseries}
  {.}
  { }
  {}

\theoremstyle{spacedstyle}

\newtheorem{theorem}{Theorem}[section]
\newtheorem{lemma}[theorem]{Lemma}
\newtheorem{corollary}[theorem]{Corollary}
\newtheorem{proposition}[theorem]{Proposition}
\newtheorem{definition}[theorem]{Definition}
\newtheorem{remark}[theorem]{Remark}

\newtheorem{example}[theorem]{Example}

\usepackage{cleveref}
\usepackage[capitalize]{cleveref} 
\AddToHook{env/lemma/begin}{\crefalias{theorem}{lemma}}
\AddToHook{env/corollary/begin}{\crefalias{theorem}{corollary}}
\AddToHook{env/proposition/begin}{\crefalias{theorem}{proposition}}
\AddToHook{env/definition/begin}{\crefalias{theorem}{definition}}
\AddToHook{env/remark/begin}{\crefalias{theorem}{remark}}
\AddToHook{env/assumption/begin}{\crefalias{theorem}{assumption}}
\AddToHook{env/example/begin}{\crefalias{theorem}{example}}

\Crefname{theorem}{Theorem}{Theorems}
\Crefname{lemma}{Lemma}{Lemmas}
\Crefname{corollary}{Corollary}{Corollaries}
\Crefname{proposition}{Proposition}{Propositions}
\Crefname{definition}{Definition}{Definitions}
\Crefname{remark}{Remark}{Remarks}
\Crefname{assumption}{Assumption}{Assumptions}
\Crefname{example}{Example}{Examples}

\usepackage[toc,page]{appendix}

\makeatletter
\def\@maketitle{%
  \vbox to 2.5in{%
    \hsize\textwidth
    \linewidth\hsize
    \centering
    {\LARGE\bf \@title \par}
    \vskip 2em
    {\large
      \begin{tabular}[t]{c}%
        \@author
      \end{tabular}\par}
    \vfil
  }%
}
\makeatother

\begin{document}
\title{Nonsmooth High-Order Averaging Theory with Application to  Extremum Seeking Optimization and Control}

\author{Hesham Abdelfattah\thanks{Hesham Abdelfattah is with the Department of Aerospace
Engineering and Engineering Mechanics and the Department of
Mathematical Sciences, University of Cincinnati, Cincinnati, OH 45221
USA (e-mail: abdelfhm@mail.uc.edu).}
\and Sameh A. Eisa\thanks{Sameh A. Eisa is with the Department of Aerospace Engineering
and Engineering Mechanics, University of Cincinnati, OH 45221 USA
(e-mail: eisash@ucmail.uc.edu).}
\and Peter Stechlinski\thanks{Peter Stechlinski is with the Department of Mathematics
and Statistics, University of Maine, ME 04469 USA (e-mail:
peter.stechlinski@maine.edu).}}
\maketitle
\vskip -6em

\begin{abstract}
In this paper, we introduce a higher-order averaging theory and method for a wide range of nonsmooth systems that are generally characterized by the classical averaging canonical form. Utilizing tools from generalized derivatives theory, we provide a nonsmooth near-identity transformation analogous to the one in smooth averaging theory. Additionally,  we exploit sharp calculus rules from lexicographic differentiation theory to provide a closed formula for nonsmooth first-order averaging, and for the first time in the literature, nonsmooth second-order averaging. In fact, our approach recovers the smooth averaging results, without needing to check, if the system under consideration is smooth. Equipped with a nonsmooth second-order averaging theory, we generalize literature results and introduce a class of control-affine extremum seeking systems that tolerate nonsmoothness in the vector fields and/or the objective function by analyzing its stability based on a closed formula analogous to first-order Lie bracket approximations available in the smooth literature. We provide numerical simulation results involving complicated nonsmooth functions to demonstrate the effectiveness of our approach.
\end{abstract}

\section{Introduction}\label{sec:intro}
\subsection{Averaging of smooth and nonsmooth systems}\label{subsec:HigherOrderAvgIntro}
Averaging theory and methods
\cite{sanders2007averaging,maggia2020higher,pokhrel2023higher,eisa2025revisiting} 
are indispensable in studying and analyzing linear and nonlinear time-varying (especially periodic) dynamical systems. They are also crucial in advancing control theory, methods and designs for many classes of systems \cite{pokhrel2023higher,WeshengAveragingHighlyOscillatory1997,elgohary2025extremum,bullo2002averaging,ariyur2003real,abdelgalil2023multi,oliveira2022extremum}.
 In this paper, we focus on nonlinear time-varying systems that can be written in the averaging-canonical form:
\begin{align}\label{eq:average-canonical form_intro}
    \dot{\x}=\epsilon \f(\x,t,\epsilon),
\end{align} 
where $\x(t;\epsilon)\in\real^n$ is the state space vector, $t\in\real$ is time, and $0<\epsilon\ll1$ is a small parameter.
In particular, we consider \eqref{eq:average-canonical form_intro} for nonlinear time-periodic (NLTP) systems (i.e., $\f(\x,t,\epsilon)$ is $T-$periodic), which appear in many applications across the sciences and engineering fields (see \cite{maggia2020higher,pokhrel2023higher} and references therein for more details). 

In the aforementioned works 
(especially for higher-order averaging beyond first-order), it is often the case that systems in the form \eqref{eq:average-canonical form_intro} are considered with the right-hand side (RHS) function, $\f(\x,t,\epsilon)$, assumed to be smooth in $\x$ and Lebesgue integrable with respect to $t$. 
However, the smoothness assumption in $\x$ can be restrictive for many classes of systems and applications that possess nonsmooth RHS functions with respect to $\x$. Hence, multiple works have considered the challenging task of relaxing said smoothness condition.
The authors in \cite{scheinker2014non} considered a class of time-varying control-affine systems relevant to extremum seeking applications that are nondifferentiable (but only at the origin) and extended a Lie bracket averaging analysis to  provide stability results for the considered time-varying class. It is important to emphasize that the class of time-varying systems considered in \cite{scheinker2014non} can be written in the averaging canonical form \eqref{eq:average-canonical form_intro} and Lie bracket averaging is a form of higher-order averaging methods as shown in \cite{pokhrel2023higher}. The authors in \cite{iannelli2006averaging} attempted to address the effects of nonsmoothness in a class of time-varying control-affine systems via leveraging certain conditions on the dither signals (they did not address directly systems in the form \eqref{eq:average-canonical form_intro}).  
In the literature there are some relevant/notable works related to hybrid dynamical systems, where nonsmoothness with respect to $\x$ is naturally an important aspect of consideration. For instance, the authors in \cite{teel2010averaging,wang2012analysis} extended averaging methods analogous to smooth first-order averaging to analyze some classes of time-varying hybrid systems, including some stability results. Recently, the authors in \cite{abdelgalil2025lie} introduced a higher-order averaging method, extending some results from \cite{teel2010averaging} to analyze the stability of particular classes of high-frequency, periodic hybrid systems.

\subsection{Smooth and nonsmooth extremum seeking systems}\label{subsec:NonsmoothESCIntro}
Extremum seeking control (ESC) \cite{ariyur2003real,pokhrel2023higher} is a powerful model-free, real-time dynamic optimization and control method. ESC techniques aim to stabilize a dynamical system around the extremum of an
objective function, for which we have access to its measurements, but not necessarily its expression. Averaging theory and methods are instrumental in the development, analysis and design of ESC techniques -- see \cite{scheinker2024100} for a comprehensive review. In standard ESC methods (e.g.,  \cite{ariyur2003real}), first-order averaging is usually used to support the analysis/design. For control-affine ESC techniques (e.g., \cite{durr2013lie,VectorFieldGRUSHKOVSKAYA2018,labar2019newton,Suttner_ESC_Nonholonomic_2020}), Lie bracket approximations (referred to as Lie bracket averaging sometimes) are often used for stability analysis and design. In \cite{pokhrel2023higher}, it was shown that Lie bracket approximations themselves are forms of higher-order averaging, with first-order Lie bracket approximations \cite{durr2013lie,VectorFieldGRUSHKOVSKAYA2018} being equivalent to second-order averaging and second-order Lie bracket approximation \cite{labar2019newton} being equivalent to third-order averaging. Additionally, a fourth-order averaging formula (third-order Lie bracket approximation) was provided in \cite{pokhrel2023higher} with a procedure to generate any higher-order averaging term for a general class of control-affine systems (including ESCs). It is worth highlighting that using higher-order Lie bracket approximations motivated recent results of ESC that guarantee exponential convergence even when the objective function behaves locally as a higher-order polynomial function \cite{grushkovskaya2025extremum}. 

In the aforementioned works on ESC, it is usually the case that smoothness of the RHS function with respect to $\x$ is assumed (sometimes relaxed to a $C^2$ condition) for the objective function and/or the system's dynamics to enable ESC analysis via averaging methods. In model-free methods, such as ESC techniques, it cannot be overstated to assert that any relaxation of smoothness conditions of the RHS function with respect to $\x$ is of significant importance to enable wider applications of said model-free methods. Hence, some works have utilized nonsmooth system theory and tools to relax the smoothness assumptions typically assumed in ESC literature. For control-affine ESC systems, the authors in \cite{scheinker2014non} studied a non-$C^2$ controller that minimizes the value of the squared Euclidean norm (resulting in a closed-loop system that is smooth everywhere except at the origin). The authors in \cite{suttner2023nonsmooth} aimed at expanding the premise of \cite{scheinker2014non} and utilized a stochastic version of the Lie bracket approximation approach in \cite{durr2013lie} with locally Lipschitz continuous objective functions. For a class of  ESC systems, the authors in \cite{poveda2021nonsmooth} studied ESCs  involving discontinuous vector fields by using averaging and singular perturbation tools for nonsmooth and set-valued  systems. The authors in \cite{williams2024semiglobal} utilized nonsmooth analysis tools for stability guarantees for safe extremum seeking (which achieves the minimization of an unknown objective function subject to some unknown, yet measured, constraints). In addition, recently the authors in \cite{rodrigues2025event} introduced event-triggered ESC for which they utilized Lyapunov and averaging theories for discontinuous systems for analysis.

\subsection{Motivation and contributions}\label{subsec:Motivation_Contribution}
\textbf{Motivation 1. A lack of nonsmooth averaging theory for general dynamic systems}: Higher-order averaging analysis for general dynamic systems in the canonical form \eqref{eq:average-canonical form_intro} requires first- and higher-order differentiation of the RHS function with respect to $\x$. 
This is not only important for the theory to hold, but also for the computation of higher-order averaging explicit formulas that can be used by the user in analysis/applications, similar to the smooth case \cite{sanders2007averaging,maggia2020higher,pokhrel2023higher}. As discussed in section 1.1, some developments have been made in the literature to relax smoothness conditions of the RHS function with respect to $\x$. However, the available literature of nonsmooth averaging, to the best of our knowledge, is either related to first-order averaging or dealing with a restricted class of systems (e.g., affine), not a general dynamic system in the  canonical form \eqref{eq:average-canonical form_intro}. 

\textbf{Motivation 2. Limitations in the generalized derivatives methods used in the averaging literature}: The literature on nonsmooth averaging relies on generalized derivatives theory to compensate for the lack of smoothness during analysis. 
However, the kinds of generalized derivative methods used in the nonsmooth averaging literature 
are    
computationally challenging to implement due to, for example, the lack of sharp calculus rules (e.g., sum and chain rules) as illustrated in \cite{ref:OMS_generalizedderivatives}, which curtailed the development of easy-to-compute nonsmooth higher-order averaging explicit formulas analogous to the smooth computations.

\textbf{Motivation 3. A lack of general nonsmooth control-affine extremum seeking}:  
As aspired by \cite{suttner2023nonsmooth}, it will be important to
generalize control-affine ESC techniques to   control laws and objective functions that  can be locally Lipschitz continuous (rather than $C^2$ everywhere except the origin).

\textbf{Contributions}: In this work, we aim to overcome the above-mentioned limitations by bringing, for the first time in the literature,  lexicographic differentiation theory and tools \cite{nesterov2005lexicographic,khan2015vector} to averaging theory and methods. Lexicographic differentiation theory and tools are applicable to a broad range of nonsmooth functions referred to as L-smooth (more details are given in \cref{sec:Prem}), and are computationally relevant, enabling for example, sharp calculus rules and algorithmic computations, as can be seen in their recent success addressing properties such as identifiability and observability of nonsmooth systems \cite{stechlinski2025identifiability}. Equipped with lexicographic differentiation theory and tools, we introduce a novel nonsmooth higher-order averaging theory for general nonsmooth dynamic systems in the canonical form \eqref{eq:average-canonical form_intro} by addressing the nonsmoothness in the system directly (instead of using smoothing approximations, for example). Additionally, we use said novel theory to analyze nonsmooth control-affine ESC systems.

In this paper, we achieve the following results:

\begin{itemize}
    \item In \cref{sec:Prem}, we establish a new nonsmooth concept of Lie bracket, called the lexicographic Lie bracket (L-Lie bracket), and prove some of its properties. 
    \item In \cref{sec:HigherOrderAvg}, we introduce a nonsmooth near-identity (coordinate) transformation that admits a class of nonsmooth functions to enable nonsmooth averaging theory. Then, we use lexicographic differentiation tools to provide closed formulas for first- and second-order averaging terms of nonsmooth systems. We prove that trajectories generated by first- and second-order averaged systems maintain bounds, approximation errors, and stability results analogous to smooth (classical) averaging.
    \item 
    In \cref{sec:NonsmoothESC}, we use our nonsmooth second-order averaging results from \cref{sec:HigherOrderAvg} to provide an approximation system for design and stability analysis of a class of nonsmooth ESC systems (analogous to the smooth second-order averaging, or equivalently first-order Lie bracket approximation in  \cite{pokhrel2023higher}). We provide multiple numerical simulations involving complicated nonsmooth functions to demonstrate the effectiveness of our results.  
\end{itemize}
\section{Preliminaries}\label{sec:Prem} 

\subsection{Clarke's generalized derivative}\label{subsec:genderivs}
We begin by introducing Clarke's generalized derivative  \cite{clarke1990optimization}. Consider an open set $X\subseteq \real^n$ and a function $\f:X\to\real^m$ that is locally Lipschitz on $X$. First we define the Bouligand subdifferential (B-subdifferential) of $\f$ as 
\begin{equation*}  
\partial_{\text{B}} \f(\x)
:=\left\{\F: \exists \x_i \to \x \text{ s.t. } \x_i \in X \setminus Z_{\f}, \J \f(\x_{i}) \to \F \right\},
\end{equation*}  
where  $\J\f(\x)$ is the Jacobian of $\f$ at $\x$ and $Z_{\f}$ is the zero measure subset of points where $\f$ is not differentiable. Clarke's generalized derivative of $\f$ at $\x \in X$ is given by:
\begin{equation}\label{eq:Clarke}
\partial_{\rm C}\f(\x):=\conv \; \partial_{\rm B}\f(\x),
\end{equation}
i.e., the convex hull of the B-subdifferential of $\f$. We note that the set $\partial_{\rm C}\f(\x)$ is nonempty, compact and convex \cite{clarke1990optimization}.
For example, if $f(x)=|x|$, we get: 
\begin{equation*}
\partial_{\rm C} f(x)=
  \begin{cases}
  \{-1\},& x < 0, \\
  [-1,1],& x = 0, \\
  \{1\},& x > 0. 
  \end{cases}
\end{equation*}
If $\f$ is $C^1$ at $\x$, we recover the classical derivative, i.e.,  $\partial_{\rm B} \f(\x) = \partial_{\rm C} \f(\x)=\{\Jf(\x)\}$. If we consider an open set $W\subseteq \real^n\times\real^q$ and a 
 function $\g:W\to\real^m$ that is locally Lipschitz, the  Clarke  generalized derivative  projection of $\g$ with  respect  to $\y$ at  $(\x,\y)$  is defined as \cite{clarke1990optimization}:
\begin{align}\label{eq:Clarke_Projection}
    \pi_{\y}\partial_{\rm C} \g(\x,\y) &= \{\N \in \real^{m\times q}:\exists[\M \;\;\; \N] \in\partial_{\rm C} \g(\x,\y)\}.
\end{align}

\subsection{Overview of lexicographic differentiation}\label{subsec:LD}
Nonsmooth numerical methods for 
optimization 
\cite{Stechlinski_Khan_Barton_NLP_LDDs} 
can be supplied with elements from Clarke's generalized derivative  {(see \cite{ref:OMS_generalizedderivatives} for more details)}, however, there is a computational challenge associated with  Clarke's generalized derivative calculations since it does not satisfy sharp calculus rules. In order to overcome this limitation, a framework based on lexicographic differentiation  \cite{nesterov2005lexicographic} and lexicographic directional differentiation \cite{khan2015vector} was introduced. This framework is applicable  to the class of L-smooth functions that includes all $C^1$, convex (such as any p-norm function), and $PC^1$ functions (such as the absolute value function, min and max functions) \cite{Scholtes}, and compositions thereof. More specifically, we define an L-smooth function as follows \cite{nesterov2005lexicographic}:
\begin{definition}\label{def:L-smoothFunction} 
     Given some open set $X \subseteq \mathbb{R}^n$, a function $\f:X\rightarrow \mathbb{R}^m$ is said to be L-smooth at $\x\in X$ if $\f$ is Lipschitz continuous on a neighborhood of $\x$ and directionally differentiable to arbitrary order. That is, for any $k\in \mathbb{N}$ and directions matrix $\M=[
      \m_{1} \quad \dots \quad \m_{k} ]\in \mathbb{R}^{n\times k}$, the following homogenization sequence exists: 
  \begingroup
  \allowdisplaybreaks
  \begin{align*}
\f^{(0)}&:\mathbb{R}^n\rightarrow\mathbb{R}^m:\uu\rightarrow \f'(\x;\uu),\\
\f^{(1)}&:\mathbb{R}^n\rightarrow\mathbb{R}^m:\uu\rightarrow [\f^{(0)}]'(\m_{1};\uu),\\ \f^{(2)}&:\mathbb{R}^n\rightarrow\mathbb{R}^m:\uu\rightarrow [\f^{(1)}]'(\m_{2};\uu),\\
      &\vdots\\
\f^{(k)}&:\mathbb{R}^n\rightarrow\mathbb{R}^m:\uu\rightarrow [\f^{(k-1)}]'(\m_{k};\uu),
  \end{align*}
\endgroup
where  $\f'(\x;\uu)=\lim_{\alpha\downarrow 0}\tfrac{\f(\x+\alpha\uu)-\f(\x)}{\alpha}$.
\end{definition}

Intuitively, the homogenization process aims to probe along the directions in the columns of the directions matrix $\M$ (taken from left to right), which be thought of as if we are ``zooming in'' and ``flattening'' the function at each stage. If the columns of $\M$ span the domain space $\real^n$, it is guaranteed that the final mapping $\f^{(k)}$ is linear and we are able to calculate the so-called lexicographic derivative (L-derivative) \cite{nesterov2005lexicographic}, which is the derivative of this final mapping:
\begin{equation}\label{eq:Lderivative}
\J^{\rm L}\f(\x;\M):=\J \f^{(k)}(\zero_n)\in \real^{m \times n}.
\end{equation}
The L-derivative can be considered as a Jacobian-like object for L-smooth functions 
{as it} provides derivative information that can be used for nonsmooth numerical methods {(see \cite{ref:OMS_generalizedderivatives} for details)}. It is important to note that if $f$ is a scalar-valued function, then $\nabla_{\rm L} f (\x;\M)=(\J^{\rm L}f(\x;\M))^{\rm T} \in \partial_{\rm C}f(\x)$, i.e., the gradient-like object, referred to as an ``L-gradient", is a Clarke subgradient. 

We also introduce the following notation: in the case that the directions matrix is taken to be the identity matrix, i.e., $\M=\I$, then we drop the directions matrix argument from the L-gradient and L-derivative, i.e., 
$$\nabla_{\rm L} f (\x):=\nabla_{\rm L} f (\x;{\I}), \quad \J^{\rm L}\f(\x):=\J^{\rm L}\f(\x;\I).$$ 
For a function $\f:X \times Y\subseteq \real^n \times \real^m \to \real^q$, we denote 
\begin{equation}\label{eq.Lnotation}
\begin{aligned}
\J^{\rm L}_{\x}(\x,\y)
:=\J^{\rm L}\f(\x,\y;(\I,\zero))=\J^{\rm L}[\f(\cdot,\y)](\x;\I), \qquad
\J^{\rm L}_{\y}(\x,\y)
:=\J^{\rm L}\f(\x,\y;(\zero,\I))=\J^{\rm L}[\f(\x,\cdot)](\y;\I)
\end{aligned}
\end{equation}
i.e., the L-derivative of the vector-valued function $\f$ with respect to $\x$ (with $\y$ fixed) and $\y$ (with $\x$ fixed) in the directions $\I_{n}$, respectively. The set of L-derivatives at a point is called the lexicographic subdifferential (L-subdifferential) and defined as \cite{nesterov2005lexicographic}:
\begin{equation*}
\partial_{\rm L}\f(\x):=\{\J^{\rm L}\f(\x;\M): \M\in\real^{n \times k} \text{ has full row rank}\}.
\end{equation*}
The lexicographic directional derivative (LD-derivative) \cite{khan2015vector} of an L-smooth function, which is a computationally-relevant tool that  can be used to calculate L-derivatives, is defined as \cite{khan2015vector}:
\begin{equation}\label{eq:LDderivative}
\f'(\x;\M):=[\f^{(0)}(\m_1)
\;\;
\f^{(1)}(\m_2)
\;
\cdots
\;
\f^{(k-1)}(\m_k)].
\end{equation}  
In case $\M$ is full row rank (and is hence right-invertible), the L-derivative $\J^{\rm L}\f(\x;\M)$ of $\f$ at $\x$ in the directions $\M$ can be furnished using $\f'(\x;\M)$ by the following relation: 
\begin{equation}\label{eq.LD.L.derivative}
\underbrace{\f'(\x;\M)}_{m \times k}=\underbrace{\J^{\rm L}\f(\x;\M)}_{m \times n} \underbrace{\M}_{n \times k}.
\end{equation} 
One of the main reasons for using the LD-derivative  to calculate the L-derivative is that the LD-derivative satisfies sharp calculus rules; given a  directions matrix $\M$ and L-smooth functions $\f$ and $\g$, the component-wise LD-derivative rule, the sum rule, the product rule, and the chain rule, are presented below, respectively:
\begin{equation}\label{eq:sharp_rules}
      \begin{split}
           \f '(\x;\M)&=(f_1'(\x;\M),f_2'(\x;\M),\ldots,f_m'(\x;\M)),\\
           [\f+\g]'(\x;\M)&=\f'(\x;\M)+\g'(\x;\M),\\
            [fg]'(\x;\M)&=g(\x)f'(\x;\M)+f(\x)g'(\x;\M),\\
           [\f \circ \g]'(\x;\M)&=\f'(\g(\x);\g'(\x;\M)).
      \end{split}
  \end{equation}
In addition to the sharp calculus rules mentioned above, another advantage of LD-derivatives is that they provide closed-form expressions for commonly used nonsmooth functions. For example, given a directions matrix $\M=[m_1 \quad m_2 \quad \cdots \quad m_k] \in \mathbb{R}^{1\times k}$, we have 
\begin{equation}\label{eq.LDderivative.abs}
\mathrm{abs}'(x;\M)=\mathrm{fsign}(x,m_1,m_2,\dots,m_k)\M,
\end{equation}
where first-sign function, $\text{fsign}(\cdot)$, returns the sign of the first nonzero element, or zero if its input is zero. Also, given a directions matrix $\M=\begin{bsmallmatrix} \M_1 \\ \M_2 \end{bsmallmatrix}\in \mathbb{R}^{2\times k}$, we have
\begin{align}\label{eq:min_function_LD}
    &\min{'}(u_0,v_0;\bm{\mathrm{M}}) 
 = {\rm \bf slmin}([u_0 \quad \M_1],[v_0 \quad \M_2])
                :=  \begin{cases}
                          \M_1,& \text{if } \mathrm{fsign}\left( \left( u_0 , \M_1\right) - \left(
                          v_0, \M_2 \right)\right)\le 0
                          ,\\
                          \M_2,& \text{otherwise},
                      \end{cases}
\end{align}
i.e., the function ${\bf slmin}$, called the shifted-lexicographic-minimum function, returns the lexicographic minimum of the two vector arguments, left-shifted by one element. For $C^1$ functions, the LD-derivative recovers the classical directional derivative. For example, we have $\sin'(x;\M)=\cos(x)\M$. The chain rule in \eqref{eq:sharp_rules} can  be used to provide the LD-derivatives of compositions of L-smooth functions.
For instance, if we consider the composition ${h(x)=[f \circ g](x)}=\sin{(\mathrm{abs}(x))}$, then the LD-derivative of ${h}$ at $x$ along the directions matrix $\M\in\real^{1\times k}$ is:
\begin{align*}
    {h}'(x;\M) 
    {= f'(g(x);g'(x;\M))}
    &= [\sin]'(\mathrm{abs}(x);\mathrm{abs'}(x;\M)) \\
    &=\cos(\mathrm{abs}(x))\mathrm{fsign}(x,m_1,m_2,\dots,m_k)\M.
\end{align*}
Similarly, for ${h(x)}=\cos(\max(x,0))$ along  $\M \in \mathbb{R}^{1\times k}$,
\begin{align*}
   {h}'(x;\M)
    &= [\cos]'(\max(x,0);{\max{'}(x,0;(\M,\zero_{1 \times k}))}\\ 
    &=-\sin(\max(x,0)){\rm \bf slmax}([x \quad \M],[0 \quad \zero_{1\times k}]).
\end{align*}
Also, we see that for ${h(x)}=\mathrm{exp}(\mathrm{abs}(x))$ with $\M\in\real^{1\times k}$ we have
\begin{align*}
    {h}'(x;\M) &= [\mathrm{exp}]'(\mathrm{abs}(x);\mathrm{abs'}(x;\M)) 
    =\mathrm{exp}(\mathrm{abs}(x))\mathrm{fsign}(x,m_1,m_2,\dots,m_k)\M.
\end{align*}
More details can be found in \cite{ref:OMS_generalizedderivatives,khan2015vector}. 

Next, we provide a nonsmooth Taylor-like approximation (expansion) of a function with respect to a subset of its variables, based on the theory in \cite{stechlinski2025identifiability}, for use later in the averaging analysis.

\begin{proposition}\label{prop:partial_L_Taylor}
Let $\f:X_1\times X_2\to\real^p$ be L-smooth, with $X_1\subseteq \real^n$, $X_2\subseteq \real^m$ open sets. Then, given any $\xnot\in X_1$, $\ynot\in X_2$,
\begin{equation}\label{eq:Partial_L_Taylor_LD}
        \lim_{\x\to\xnot} \frac{\|\f(\x,\ynot)-\f(\xnot,\ynot)-\f'\left(\xnot,\ynot;\begin{bsmallmatrix}
            \Delta \x & \I_n & \zero_{n\times m}\\ \zero_{m}&\zero_{m\times n}&\I_{m}
        \end{bsmallmatrix}\right)\begin{bsmallmatrix}
            0\\\Delta \x\\\zero_{m}
        \end{bsmallmatrix}\|}{\|\Delta \x\|}=0,
    \end{equation}   
where $\Delta \x = \x-\xnot$, or, equivalently,  
\begin{equation}\label{eq:Partial_L_Taylor_L}
        \lim_{\x\to\xnot} \frac{\|\f(\x,\ynot)-\f(\xnot,\ynot)-\JL\f\left(\xnot,\ynot;\begin{bsmallmatrix}
            \Delta \x & \I_n & \zero_{n\times m}\\ \zero_{m}&\zero_{m\times n}&\I_{m}
        \end{bsmallmatrix}\right)\begin{bsmallmatrix}
           \Delta \x\\\zero_{m}
        \end{bsmallmatrix}\|}{\|\Delta \x\|}=0.
    \end{equation}   

\end{proposition}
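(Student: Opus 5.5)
Write $\z_0=(\xnot,\ynot)$ and $\M_{\Delta}=\begin{bsmallmatrix}\Delta \x & \I_n & \zero_{n\times m}\\ \zero_{m}&\zero_{m\times n}&\I_{m}\end{bsmallmatrix}$. The plan is to reduce the statement to two facts: directional differentiability of $\f$ at $\z_0$, and the definition of the LD-derivative \eqref{eq:LDderivative}.

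\textbf{Step 1 (reduction).} By \eqref{eq:LDderivative}, the first column of $\f'(\z_0;\M_{\Delta})$ is $\f^{(0)}(\m_1)=\f'(\z_0;(\Delta\x,\zero_m))$. Multiplying by $\begin{bsmallmatrix}0\\\Delta\x\\\zero_m\end{bsmallmatrix}$ picks out the remaining columns instead: it gives $\f^{(1)}$ evaluated at $(\Delta\x,\zero_m)$. Here $\f^{(1)}=[\f^{(0)}]'((\Delta\x,\zero);\cdot)$. The function $\f^{(0)}$ is positively homogeneous, so its directional derivative at $\uu$ in direction $\uu$ equals $\f^{(0)}(\uu)$. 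Hence
\[
\f'\!\left(\z_0;\M_{\Delta}\right)\begin{bsmallmatrix}0\\\Delta\x\\\zero_m\end{bsmallmatrix}=\f^{(1)}((\Delta\x,\zero_m))=\f'(\z_0;(\Delta\x,\zero_m)).
\]
The left-hand side is a linear combination of columns $2$ through $n+1$, which are themselves values of $\f^{(1)}$. I therefore also need the linearity of $\f^{(1)}$ along the span of the columns already probed. I will use the standard property of the homogenization sequence (Nesterov; Khan--Barton): $\f^{(j)}$ is linear on $\mathrm{span}\{\m_1,\dots,\m_j\}$, and $\f^{(j)}(\m_i)=\f^{(i-1)}(\m_i)$ for $i\le j$. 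By this property, combining columns $2,\dots,n+1$ with coefficients $\Delta\x$ gives $\f^{(n)}((\Delta\x,\zero))$. Since $(\Delta\x,\zero)=\m_1$, this equals $\f^{(0)}(\m_1)=\f'(\z_0;(\Delta\x,\zero))$.

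\textbf{Step 2 (Taylor estimate).} It then suffices to show
\[
\|\f(\xnot+\Delta\x,\ynot)-\f(\z_0)-\f'(\z_0;(\Delta\x,\zero))\|=o(\|\Delta\x\|).
\]
For a locally Lipschitz, directionally differentiable function, the directional derivative is in fact a Bouligand (Hadamard) derivative. The argument is the usual compactness one. Write $\Delta\x=t\dd$ with $\|\dd\|=1$. Suppose the estimate fails along some sequence. Pass to a subsequence with $\dd_k\to\dd$. Then use the Lipschitz bound $\|\f(\z_0+t_k(\dd_k,\zero))-\f(\z_0+t_k(\dd,\zero))\|\le Lt_k\|\dd_k-\dd\|$, together with Lipschitz continuity of $\f'(\z_0;\cdot)$ (constant $L$), to get a contradiction.

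\textbf{Step 3 (the L-derivative form).} The equivalent form \eqref{eq:Partial_L_Taylor_L} follows from \eqref{eq.LD.L.derivative}. The matrix $\M_{\Delta}$ has full row rank because its last $n+m$ columns form $\I_{n+m}$. Then
\[
\f'(\z_0;\M_{\Delta})\begin{bsmallmatrix}0\\\Delta\x\\\zero\end{bsmallmatrix}=\JL\f(\z_0;\M_{\Delta})\,\M_{\Delta}\begin{bsmallmatrix}0\\\Delta\x\\\zero\end{bsmallmatrix}=\JL\f(\z_0;\M_{\Delta})\begin{bsmallmatrix}\Delta\x\\\zero\end{bsmallmatrix}.
\]

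\textbf{Main obstacle.} The delicate point is Step 1, namely justifying that column $1$ and columns $2$ through $n+1$ are consistent. I expect this to follow from the linearity-on-span lemma. Failing that, I would argue directly: $\f^{(n)}$ is linear on $\mathbb{R}^n\times\{0\}$ because of the identity block, and $\m_1$ lies in that span.
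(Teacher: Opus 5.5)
Your proof is correct, but it takes a different route from the paper.

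\textbf{The paper's route.} It restricts to $\pmb{\varphi}(\x)=\f(\x,\ynot)$. By induction it shows that the homogenization sequence of $\pmb{\varphi}$ along $[\Delta\x\;\;\I_n]$ coincides with that of $\f$ along $\M_\Delta$ on vectors $(\dd,\zero_m)$. It then imports the first-order LD-Taylor expansion for $\pmb{\varphi}$ from Theorem 3.1 of the cited identifiability paper, and reads off the L-derivative form using a right inverse of $\M_\Delta$.

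\textbf{Your route.} You never pass to the restriction. You use the Nesterov/Khan--Barton facts that $\f^{(j)}$ is linear on $\mathrm{span}\{\m_1,\dots,\m_j\}$ and that $\f^{(j)}(\m_i)=\f^{(i-1)}(\m_i)$ for $i\le j$. With these you show that the LD-term is exactly the ordinary one-sided directional derivative $\f'(\xnot,\ynot;(\Delta\x,\zero_m))$. The proposition then reduces to Bouligand differentiability of a locally Lipschitz, directionally differentiable map, which you prove by the standard compactness argument. Your Step 3 is the paper's right-inverse computation done more directly via $\M_\Delta\begin{bsmallmatrix}0\\ \Delta\x\\ \zero_m\end{bsmallmatrix}=\begin{bsmallmatrix}\Delta\x\\ \zero_m\end{bsmallmatrix}$.

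\textbf{What each buys.} Your argument is self-contained apart from a standard lemma. It makes transparent that the approximating term is the B-derivative, positively homogeneous but not linear in $\Delta\x$, with an $o(\|\Delta\x\|)$ remainder and nothing stronger in general. The paper's argument stays inside the LD calculus of the cited reference. It also yields a reusable identification of partial versus full homogenization sequences.

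\textbf{Index fix in Step 1.} Apply the span lemma with $j=n+1$, not $j=n$. Since $\mathrm{span}\{\m_1,\dots,\m_{n+1}\}=\real^n\times\{\zero_m\}$ and columns $2,\dots,n+1$ equal $\f^{(n+1)}(\m_2),\dots,\f^{(n+1)}(\m_{n+1})$, you get directly
$\sum_{i=1}^n\Delta x_i\,\f^{(i)}(\m_{i+1})=\f^{(n+1)}(\m_1)=\f^{(0)}(\m_1)$.

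With $j=n$ the lemma only gives linearity on $\mathrm{span}\{\m_1,\dots,\m_n\}$. That span misses $\m_{n+1}$ when $\Delta x_n=0$. In that case your fallback claim that $\f^{(n)}$ is linear on $\real^n\times\{\zero_m\}$ is false. For example, take $f(\x,\y)=|x_2|$ with $n=2$, $\xnot=\zero$ and $\Delta\x=(1,0)$; then $\m_1=\m_2=(\coord_1,\zero_m)$ and $f^{(2)}(\uu,\vv)=|u_2|$.

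Two intermediate sentences in Step 1 are also inaccurate. The multiplication does not literally give $\f^{(1)}$ at $(\Delta\x,\zero_m)$. Columns $2,\dots,n+1$ are values of $\f^{(1)},\dots,\f^{(n)}$ respectively, not all of $\f^{(1)}$. The span lemma at level $n+1$ is what reconciles them.
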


\begin{proof}
    Consider the L-smooth function $\pmb{\varphi}(\x):=\f(\x,\ynot)$.
    Let $\M=[\Delta \x \quad \I_n]$.
    We aim to show, using induction, that 
    \begin{equation}\label{eq.inductiongoal}
        \pmb{\varphi}'(\x_0;\M)\begin{bsmallmatrix}
            0\\\Delta \x
\end{bsmallmatrix}=\f'\left(\xnot,\ynot;\begin{bsmallmatrix}
        \Delta \x & \I_n & \zero_{n\times m}\\ \zero_{m}&\zero_{m\times n}&\I_{m}
    \end{bsmallmatrix}\right)\begin{bsmallmatrix}
            0\\\Delta \x\\\zero_{m}
        \end{bsmallmatrix}.
    \end{equation}
    We note that, by definition,
    \begin{align*}
        \pmb{\varphi}'(\x_0;\M)=\begin{bmatrix}
            \pmb{\varphi}^{(0)}(\Delta \x)&\pmb{\varphi}^{(1)}(\coord_1)&\cdots&\pmb{\varphi}^{(n)}(\coord_n)
        \end{bmatrix},
    \end{align*}
    and
       \begin{align*}
        &\f'\left(\xnot,\ynot;\begin{bsmallmatrix}
        \Delta \x & \I_n & \zero_{n\times m}\\ \zero_{m}&\zero_{m\times n}&\I_{m}
    \end{bsmallmatrix}\right) \\
       &=[
            \f^{(0)}(\Delta \x,\zero_{m})
            \quad
            \f^{(1)}(\coord_1,\zero_{m})
            \quad
            \cdots
            \quad
            \f^{(n)}(\coord_n,\zero_{m})
            \quad
            \f^{(n+1)}(\zero_{n},\tilde{\coord}_{1})
           \quad
            \cdots
            \quad
            \f^{(n+m)}(\zero_{n},\tilde{\coord}_{m})
            ],
    \end{align*}
    where $\coord_i$, $i=1,\dots,n$, and $\tilde{\coord}_j$, $j=1,\dots,m$  are the standard basis vectors in $\real^n$ and $\real^m$, respectively. Hence, to show \eqref{eq.inductiongoal}, we only need show that
    \begin{align*}
     \pmb{\varphi}^{(0)}(\Delta \x)&=\f^{(0)}(\Delta \x,\zero_{m}),\quad
          \pmb{\varphi}^{(i)}(\coord_i)=\f^{(i)}(\coord_i,\zero_{m}), \quad i=1,\ldots,n.
    \end{align*}
    Let $\dd\in\real^n$. First, note that we have 
    \begin{align*}
        \pmb{\varphi}^{(0)}(\dd)&=\pmb{\varphi}'(\xnot;\dd)=\lim_{\alpha\downarrow0}\frac{\pmb{\varphi}(\xnot+\alpha\dd)-\pmb{\varphi}(\xnot)}{\alpha}=\lim_{\alpha\downarrow0}\frac{\f(\xnot+\alpha\dd,\ynot)-\f(\xnot,\ynot)}{\alpha}\\
        &=\lim_{\alpha\downarrow0}\frac{\f(\xnot+\alpha\dd,\ynot+\alpha\zero_{m})-\f(\xnot,\ynot)}{\alpha}
        =\f'(\xnot,\ynot;(\dd,\zero_{m}))
        =\f^{(0)}(\dd,\zero_{m}).
    \end{align*}
    Now we proceed by induction to prove $\pmb{\varphi}^{(i)}(\dd)=\f^{(i)}(\dd,\zero_{m})$, for $i=1,\dots,n$. For the base case, we have
        \begin{align*}
        \pmb{\varphi}^{(1)}(\dd)&=[\pmb{\varphi}^{(0)}]'(\m_1;\dd)=\lim_{\alpha\downarrow0}\frac{\pmb{\varphi}^{(0)}(\m_1+\alpha\dd)-\pmb{\varphi}^{(0)}(\m_1)}{\alpha}
        =\lim_{\alpha\downarrow0}\frac{\f^{(0)}(\coord_1+\alpha\dd,\zero_{m})-\f^{(0)}(\coord_1,\zero_{m})}{\alpha}\\
        &=\lim_{\alpha\downarrow0}\frac{\f^{(0)}(\coord_1+\alpha\dd,\zero_{m}+\alpha\zero_{m})-\f^{(0)}(\coord_1,\zero_{m})}{\alpha}
        =[\f^{(0)}]'(\coord_1,\zero_{m};(\dd,\zero_{m}))
        =\f^{(1)}(\dd,\zero_{m}).
    \end{align*}
    For the inductive step, suppose $\pmb{\varphi}^{(k)}(\dd)=\f^{(k)}(\dd,\zero_{m})$, then
    \begin{align*}
        \pmb{\varphi}^{(k+1)}(\dd)&=[\pmb{\varphi}^{(k)}]'(\m_k;\dd)=\lim_{\alpha\downarrow0}\frac{\pmb{\varphi}^{(k)}(\m_k+\alpha\dd)-\pmb{\varphi}^{(k)}(\m_k)}{\alpha}
        =\lim_{\alpha\downarrow0}\frac{\f^{(k)}(\coord_k+\alpha\dd,\zero_{m})-\f^{(k)}(\coord_k,\zero_{m})}{\alpha}\\
        &=\lim_{\alpha\downarrow0}\frac{\f^{(k)}(\coord_k+\alpha\dd,\zero_{m}+\alpha\zero_{m})-\f^{(k)}(\coord_k,\zero_{m})}{\alpha}
        =[\f^{(k)}]'(\coord_k,\zero_{m};(\dd,\zero_{m}))
        =\f^{(k+1)}(\dd,\zero_{m}).
    \end{align*}
Hence, we see that for any fixed $\dd\in \real^n$, $\pmb{\varphi}^{(i)}(\dd)=\f^{(i)}(\dd,\zero_{m})$ for $i=0,\dots,n$. Then, we get
\begin{align*}
    \pmb{\varphi}'(\x_0;\M)\begin{bsmallmatrix}
            0\\\Delta \x
\end{bsmallmatrix}
&= [
            \pmb{\varphi}^{(0)}(\Delta \x)\quad\pmb{\varphi}^{(1)}(\coord_1)\quad\dots\quad\pmb{\varphi}^{(n)}(\coord_n)
        ]\begin{bsmallmatrix}
            0\\\Delta \x
\end{bsmallmatrix}
        =[\pmb{\varphi}^{(1)}(\coord_1 )\quad\pmb{\varphi}^{(2)}(\coord_2)\quad\dots\quad\pmb{\varphi}^{(n)}(\coord_n)
        ]\Delta \x \\
        &=[\f^{(1)}(\coord_1 ,\zero_{m})\quad\f^{(2)}(\coord_2,\zero_{m})\quad\dots\quad\f^{(n)}(\coord_n,\zero_{m})
        ]\Delta \x \\
        &=[\f^{(0)}(\Delta \x ,\zero_{m}),\f^{(1)}(\coord_1 ,\zero_{m})\quad\dots\quad\f^{(n)}(\coord_n,\zero_{m})\quad\f^{(n+1)}(\zero_{n},\tilde{\coord}_1)\quad\dots\quad\f^{(n+m)}(\zero_{n},\tilde{\coord}_m)]\begin{bsmallmatrix}
            0\\\Delta \x\\\zero_{m}
        \end{bsmallmatrix}\\
        &=\f'\left(\xnot,\ynot;\begin{bsmallmatrix}
        \Delta \x & \I_n & \zero_{n\times m}\\ \zero_{m}&\zero_{m\times n}&\I_{m}
    \end{bsmallmatrix}\right)\begin{bsmallmatrix}
            0\\\Delta \x\\\zero_{m}
        \end{bsmallmatrix}.
\end{align*}
From \cite[Theorem 3.1]{stechlinski2025identifiability}, we have that
\begin{align*}
    \lim_{\x\to\x_0} \frac{\|\pmb{\varphi}(\x)-\pmb{\varphi}(\x_0)-\pmb{\varphi}'(\x_0;\M)\begin{bsmallmatrix}
            0 \\\Delta \x 
        \end{bsmallmatrix}\|}{\|\Delta \x\|}=0,
\end{align*}
and \eqref{eq:Partial_L_Taylor_LD} follows. For the L-derivative form, we observe
\begin{align*}
    \begin{bmatrix}
            \Delta \x & \I_n & \zero_{n\times m}\\ \zero_{m}&\zero_{m\times n}&\I_{m}
        \end{bmatrix}\begin{bmatrix}
            \zero_{1\times n} & \zero_{1\times m} \\ \I_{n} & \zero_{n\times m}\\ \zero_{m\times n}&\I_{m}
        \end{bmatrix} =   \begin{bmatrix}
            \I_n & \zero_{n\times m}\\ \zero_{m\times n}&\I_{m}
        \end{bmatrix},
\end{align*}
i.e., $\begin{bsmallmatrix}
            \zero_{1\times n} & \zero_{1\times m} \\ \I_{n} & \zero_{n\times m}\\ \zero_{m\times n}&\I_{m}
        \end{bsmallmatrix}$ is the right inverse of $\begin{bsmallmatrix}
            \Delta \x & \I_n & \zero_{n\times m}\\ \zero_{m}&\zero_{m\times n}&\I_{m}
        \end{bsmallmatrix}$. Hence, using \eqref{eq.LD.L.derivative}, we get
\begin{align*}
    &\f'(\xnot,\ynot;\begin{bsmallmatrix}
            \Delta \x & \I_n & \zero_{n\times m}\\ \zero_{m}&\zero_{m\times n}&\I_{m}
        \end{bsmallmatrix})\begin{bsmallmatrix}
            0\\\Delta \x\\\zero_{m}
        \end{bsmallmatrix}= \f'(\xnot,\ynot;\begin{bsmallmatrix}
            \Delta \x & \I_n & \zero_{n\times m}\\ \zero_{m}&\zero_{m\times n}&\I_{m}
        \end{bsmallmatrix})\begin{bsmallmatrix}
            \zero_{1\times n} & \zero_{1\times m} \\ \I_{n} & \zero_{n\times m}\\ \zero_{m\times n}&\I_{m}
        \end{bsmallmatrix}\begin{bsmallmatrix}
            \Delta \x\\\zero_{m}
        \end{bsmallmatrix}\\
        &= \f'(\xnot,\ynot;\begin{bsmallmatrix}
            \Delta \x & \I_n & \zero_{n\times m}\\ \zero_{m}&\zero_{m\times n}&\I_{m}
        \end{bsmallmatrix})\begin{bsmallmatrix}
            \Delta \x & \I_n & \zero_{n\times m}\\ \zero_{m}&\zero_{m\times n}&\I_{m}
        \end{bsmallmatrix}^{-1}\begin{bsmallmatrix}
            \Delta \x\\\zero_{m}
        \end{bsmallmatrix}
        =\JL\f\left(\xnot,\ynot;\begin{bsmallmatrix}
            \Delta \x & \I_n & \zero_{n\times m}\\ \zero_{m}&\zero_{m\times n}&\I_{m}
        \end{bsmallmatrix}\right) \begin{bsmallmatrix}
            \Delta \x\\\zero_{m}
        \end{bsmallmatrix}.
\end{align*}
Finally, we see from \eqref{eq:Partial_L_Taylor_LD} that 
\begin{align*}
    \f(\x,\ynot) = \f(\xnot,\ynot)+\f'\left(\xnot,\ynot;\begin{bsmallmatrix}
            \Delta \x & \I_n & \zero_{n\times m}\\ \zero_{m}&\zero_{m\times n}&\I_{m}
        \end{bsmallmatrix}\right)\begin{bsmallmatrix}
            0\\\Delta \x\\\zero_{m}
        \end{bsmallmatrix}+\mathcal{O}(\|\begin{bsmallmatrix}
            0\\\Delta \x\\\zero_{m}
        \end{bsmallmatrix}\|^2),
\end{align*}
and \eqref{eq:Partial_L_Taylor_L} follows.
\end{proof}

\begin{remark}
From \eqref{eq:Partial_L_Taylor_LD} and \eqref{eq:Partial_L_Taylor_L}, we have that  
    \begin{align}
        \f(\x,\ynot)
        &=\f(\xnot,\ynot)+\f'\left(\xnot,\ynot;\begin{bsmallmatrix}
            \Delta \x & \I_n & \zero_{n\times m}\\ \zero_{m}&\zero_{m\times n}&\I_{m}
        \end{bsmallmatrix}\right)\begin{bsmallmatrix}
            0 \\ \Delta \x\\\zero_{m}
        \end{bsmallmatrix} + \mathcal{O}(\|
            \Delta \x\|^2)\label{eq:Partial_LD_Taylor_Approx}\\
     &=\f(\xnot,\ynot)+\JL\f\left(\xnot,\ynot;\begin{bsmallmatrix}
            \Delta \x & \I_n & \zero_{n\times m}\\ \zero_{m}&\zero_{m\times n}&\I_{m}
        \end{bsmallmatrix}\right)\begin{bsmallmatrix}
            \Delta \x\\\zero_{m}
        \end{bsmallmatrix} + \mathcal{O}(\|
            \Delta \x\|^2)\label{eq:Partial_L_Taylor_Approx}
    \end{align}
Moreover, if $\f$ is $C^1$ at $(\xnot,\ynot)$, then $\JL\f=\J\f$ and this simplifies as
 \begin{align*}
        \f(\x,\ynot)
        &=\f(\xnot,\ynot)+\J\f(\xnot,\ynot)\begin{bsmallmatrix}
            \Delta \x\\\zero_{m}
        \end{bsmallmatrix} + \mathcal{O}(\|
            \Delta \x\|^2)
     =\f(\xnot,\ynot)+\frac{\partial \f}{\partial \x}(\xnot,\ynot) \Delta \x+ \mathcal{O}(\|
            \Delta \x\|^2),
                \end{align*}
                as expected.
\end{remark}

    
\begin{example}\label{ex:firstorder_TAC}
This example is adapted from \cite[Example 3.3]{stechlinski2025identifiability}. 
Consider the function $f(x,y)=|x^2-y^2|$ and let $(x_0,y_0)=(1,1)$. Consider $g(x,y)=x^2-y^2$, and take 
 $\M=\begin{bsmallmatrix}
            \Delta \x & \I_n & \zero_{n\times m}\\ \zero_{m}&\zero_{m\times n}&\I_{m}
        \end{bsmallmatrix}=\begin{bsmallmatrix} x-1 & 1 & 0 \\ 0 & 0 & 1 \end{bsmallmatrix}$, i.e., $n=m=1$.

Since $g$ is a $C^1$ function, then we have, 
\begin{align*}g'(x_0,y_0;\M)
=\J g(x_0,y_0) \M
&=\begin{bmatrix}
     2 & -2
\end{bmatrix}\begin{bmatrix}
   x-1 & 1 & 0 \\ 0 & 0 & 1
\end{bmatrix}=\begin{bmatrix}
    2(x-1) & 2 & -2
\end{bmatrix}.
\end{align*}
Then, from the chain rule in \eqref{eq:sharp_rules}, and the formula in \eqref{eq.LDderivative.abs}, we get
\begin{align*}
f'(x_0,y_0;\M)&=
{\rm abs}^{'}(g(x_0,y_0);g'(x_0,y_0;\M))=\mathrm{fsign}(0,2(x-1),2,-2) \times [2(x-1) \quad 2 \quad -2].
\end{align*}
Hence, we get the following from \eqref{eq:Partial_LD_Taylor_Approx}
\begin{align*}
f(x,y_0) 
\approx f(x_0,y_0)+f'(x_0,y_0;\begin{bsmallmatrix} x-1 & 1 & 0 \\ 0 & 0 & 1 \end{bsmallmatrix}) \begin{bsmallmatrix} 0  \\ x-1 \\ 0 \end{bsmallmatrix}
&=\mathrm{fsign}(0,2(x-1),2,-2)  \times (2(x-1))\\
&=\begin{cases} 
2(x-1) & \text{if } x \geq 1, \\ 
-2(x-1)& \text{if } x<1. \end{cases}
\end{align*}
 \end{example}

\subsection{Nonsmooth and Discontinuous ODE Systems}\label{subsec:filippov}
Consider the non-autonomous ODE system
\begin{equation}\label{eq.ODEsystem}
\dot{\x} = \f(\x,t), \quad \x(t_0) = \x_0,
\end{equation}
where $\f: X \times \real \to \real^n$, with $X \subseteq \real^n$ open and connected. In the case that $\f$ is continuous, it follows from classical theory that \eqref{eq.ODEsystem} admits a classical  solution (i.e., a $C^1$ solution) local to $t_0$. If, in addition, $\f$ is locally Lipschitz continuous, then uniqueness follows. In the case that $\f$ satisfies the so-called Carath\'{e}odory existence/uniqueness conditions (allowing for discontinuities with respect to $t$ --- see \cite{filippov,cortes2008discontinuous}), then \eqref{eq.ODEsystem} admits a unique Carath\'{e}odory solution (i.e., an absolutely continuous solution) local to $t_0$ satisfying \eqref{eq.ODEsystem} almost everywhere. Finally, in the case that $\f$ is piecewise continuous (e.g., $PWD^{1}$), then Filippov \cite{filippov} established the following: if $\f$ is measurable and locally essentially bounded, then \eqref{eq.ODEsystem} admits a Filippov solution, i.e., an absolutely continuous solution local to $t_0$ that satisfies the following differential inclusion almost everywhere
\begin{equation}\label{eq.filippovODE}
\dot{\x} \in \bm{\mathcal{F}}(\x,t), \quad \x(t_0)=\x_0,
\end{equation}
where the Filippov set-valued map $\bm{\mathcal{F}}(\x,t)$ is defined as
$$
\bm{\mathcal{F}}(\x,t)=\bm{\mathcal{F}}[\f](\x,t) := \bigcap_{\delta > 0} \bigcap_{\mu(N)=0} \overline{\text{co}} \, \f(B_\delta(x) \setminus N,t),
$$
where $\mu(\cdot)$ denotes Lebesgue measure of a set, $\overline{\text{co}}(\cdot)$ denotes convex closure, and $B_\delta(\x)=\{\y\in\real^n: \|\y-\x\|<\delta\}$. If, in addition, $\f$ is essentially one-sided Lipschitz or satisfies a transversality-like condition, then the Filippov solution is unique. (See \cite{cortes2008discontinuous} for details and examples.)

Let $\x(t)$ be a Filippov solution of $\dot{\x}=\f(\x,t)$. Let $V:\real^n\times\real\to\real$ be a Lipschitz and a regular  (in the sense of \cite[Definition 2.4]{shevitz1994lyapunov}) function. From \cite[Theorem 2.2]{shevitz1994lyapunov}), we have
 \begin{align}
     \frac{dV}{dt}(\x(t),t)\in \dot{\Tilde{V}}(\x(t),t),\; \text{a.e.},
 \end{align}
 where $\dot{\Tilde{V}}(\x(t),t)$ is defined as
     \begin{align}\label{eq:V_general_stability}
       \dot{\Tilde{V}}(\x(t),t) &= \bigcap_{\xi\in\partial_{\rm C} V(\x(t),t)} \xi^{\rm{T}}\begin{bmatrix}
           \mathcal{F}[\f(\x(t),t)](\x(t),t) \\1
       \end{bmatrix}.
     \end{align}
Before proceeding, we recall the notions of uniform stability and practical stability (relevant to the system in \eqref{eq:average-canonical form_intro} in averaging canonical form --- see, e.g., \cite{pokhrel2023higher,moreau2000practical}). Supposing that the ODE system in \eqref{eq.ODEsystem} admits solutions on $[t_0,\infty)$ (rescaling time if necessary), we have the following.

\begin{definition}\label{def:asymptotic_stability}
An equilibrium $\x^*$ of \eqref{eq.ODEsystem} is said to be locally uniformly asymptotically stable (LUAS) if the following hold: 
\begin{itemize}
\item Uniform stability: For  every $c_1>0$, there exists $c_2>0$ such that for all $t_0\in\real$, $\x_0\in B_{c_2}(\x^*)$ implies that $\x(t;\epsilon)\in B_{c_1}(\x^*)$ for all $t\in [t_0,\infty)$.
\item Uniform boundedness: For  every $c_2>0$, there exists $c_1>0$ such that for all $t_0\in\real$, $\x_0\in B_{c_2}(\x^*)$ implies that $\x(t;\epsilon)\in B_{c_1}(\x^*)$ for all $t\in [t_0,\infty)$.
\item  Uniform attractivity: For  every $c_2>0$ and  $c_1>0$, there exists $t_f\in [0,\infty)$ such that for all $t_0\in\real$, $\x(t_0)\in B_{c_2}(\x^*)$ implies that $\x(t;\epsilon)\in B_{c_1}(\x^*)$ for all $t\in [t_0+t_f,\infty)$. 
\end{itemize}
\end{definition}
Supposing the system in \eqref{eq:average-canonical form_intro} admits solutions  on $[t_0,\infty)$, for $\epsilon \in (0,\epsilon_0]$, for some $\epsilon_0>0$, and defining the set neighborhood $B_{\delta}(D)=\{\y\in\real^n: \inf_{\x \in D}\|\y-\x\|<\delta\}$ for $D \subset X \subseteq \real^n$,
we have the following (see, e.g., \cite{durr2013lie}).
\begin{definition}\label{def:practical_stability}
A compact subset $D \subset X \subseteq \real^n$ is said to be locally practically uniformly asymptotically stable (LPUAS) with respect to \eqref{eq:average-canonical form_intro}  
if the following conditions hold:
\begin{itemize}
\item Practical uniform stability: For  every $c_1>0$, there exists $c_2>0$ and $\hat{\epsilon}\in(0,\epsilon_0]$ such that for all $t_0\in\real$ and for all $\epsilon\in(0,\hat{\epsilon})$, $\x(t_0)\in B(c_2)$ implies that $\x(t;\epsilon)\in B(c_1)$ for all $t\in [t_0,\infty)$.
\item Practical uniform boundedness: For every $c_2>0$, there exists $c_1>0$ and $\hat{\epsilon}\in(0,\epsilon_0]$ such that for all $t_0\in\real$ and for all $\epsilon\in(0,\hat{\epsilon})$, $\x_0 \in B_{c_2}(D)$ implies that $\x(t;\epsilon)\in B_{c_1}(D)$ for all $t\in [t_0,\infty)$.
\item  Practical uniform attractivity: For  every $c_2>0$ {and}  $c_1>0$, there exists $t_f\in [0,\infty)$ and $\hat{\epsilon}\in(0,\epsilon_0]$ such that for all $t_0\in\real$ and for all $\epsilon\in(0,\hat{\epsilon})$, $\x_0 \in B_{c_2}(D)$ implies that $\x(t;\epsilon)\in B_{c_1}(D)$ for all $t\in [t_0+t_f,\infty)$. 
\end{itemize}
\end{definition}

\section{Nonsmooth analysis: introducing the lexicographic Lie bracket}\label{subsec:lexicolie}
In this part, we propose a new nonsmooth Lie bracket, called the lexicographic Lie bracket (L-Lie bracket), defined as follows for L-smooth functions.

\begin{definition}\label{def:lexicographicliebracket}
{Given some open set $X \subseteq \real^n$ and L-smooth functions $\f, \g:X\to\real^m$, the lexicographic Lie bracket (L-Lie bracket) of $\f$ and $\g$ at $\x \in X$ along $\M\in\real^{n \times k}$ is defined as
\begin{equation}\label{eq.llie}
[\f,\g]_{\rm L}(\x;\M):=\J^{\rm L}\g(\x;\M) \f(\x)-\J^{\rm L}\f(\x;\M)\g(\x).
\end{equation}\label{eq.llie_identity}
The natural L-Lie bracket of $\f$ and $\g$  is defined as 
\begin{equation}
[\f,\g]_{\rm L}(\x):=\J^{\rm L}\g(\x;\I_{n}) \f(\x)-\J^{\rm L}\f(\x;\I_{n})\g(\x).
\end{equation}\label{eq.llie_identity.natural}
}
\end{definition}
\begin{remark}
Note that if $\f$ and $\g$ are $C^1$ and $\M=\I_{n}$, then \eqref{eq.llie} simplifies as
\begin{equation}\label{eq.llie.smooth}
[\f,\g]_{\rm L}(\x;\M)=\J\g(\x) \f(\x)-\J\f(\x)\g(\x)=[\f,\g](\x),
\end{equation}
i.e., the classical Lie bracket of vector fields is recovered. 

\end{remark}

\begin{example}
Consider the functions $f(x,y)=\max(x,y)$ and $g(x,y)=|x-y|$. Then, with $\M=\I_2$,
$$f'(x,y;\I_2)=\J^{\rm L}f(x,y;\I_2)={\rm \bf slmax}([x \quad 1 \quad 0],[y \quad 0 \quad 1])$$
and
\begin{align*}
g'(x,y;\I_2)
=\J^{\rm L}g(x,y;\I_2)
&=\text{fsign}(x-y,[1 \quad -1]\I_2)[1 \quad -1]\I_2
=\text{fsign}(x-y,1,-1)[1 \quad -1].
\end{align*}
Thus,
\begin{align*}
[f,g]_{\rm L}(x,y)
&=|x-y|{\rm \bf slmax}([x \quad 1 \quad 0],[y \quad 0 \quad 1])
-\max(x,y) \text{fsign}(x-y,1,-1)[1 \quad -1]\max(x,y)\\
&=\begin{cases}
[-y \quad x], &\text{if } x \geq y,\\
[y \quad -x], &\text{if } x<y.
\end{cases}
\end{align*}
See \cref{fig.Llie} for an illustration.
\begin{figure}[h!]
    \centering
    \includegraphics[width=0.7\linewidth]{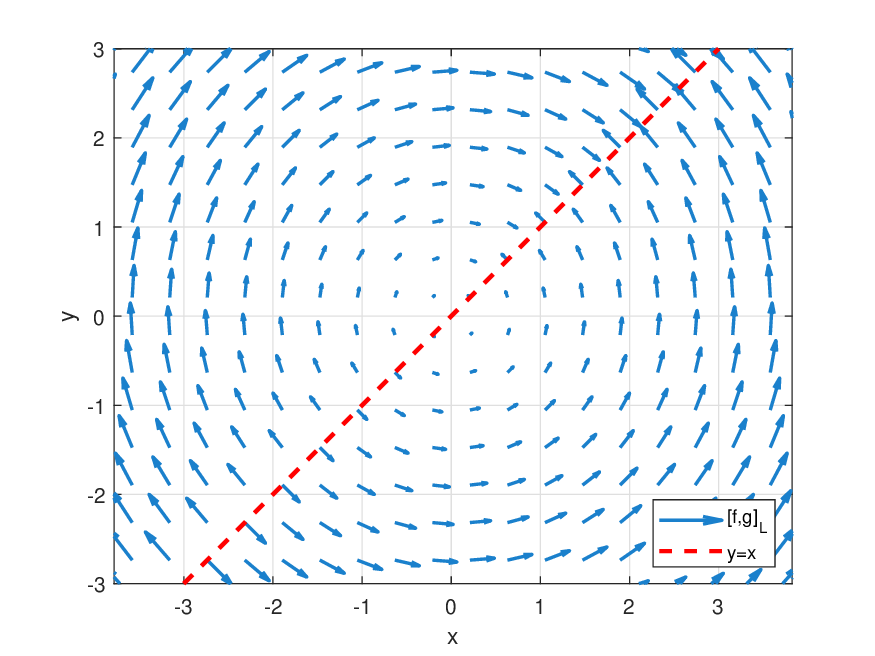}
    \caption{L-Lie bracket $[f,g]_{\rm L}(x,y)$.}
    \label{fig.Llie}
\end{figure}
If, on the other hand, $\M=-\I_2$, then
\begin{align*}
f'(x,y;-\I_2)&={\rm \bf slmax}([x,-1,0],[y,0,-1])
  \Rightarrow \; \J^{\rm L}f(x,y;-\I_2)={\rm \bf slmax}([x,-1,0],[y,0,-1])(-\I_2)
\end{align*}
and
\begin{align*}
g'(x,y;-\I_2)&=\text{fsign}(x-y,[1 \quad -1](-\I_2))[1 \quad -1](-\I_2)
=\text{fsign}(x-y,-1,1)[-1 \quad 1] \\
&\Rightarrow \; \J^{\rm L}g(x,y;-\I_2)=\text{fsign}(x-y,-1,1)[1 \quad -1].
\end{align*}
Hence,
\begin{align*}
[f,g]_{\rm L}(x,y;-\I_2)
&=|x-y|{\rm \bf slmax}([x \quad 1 \quad 0],[y \quad 0 \quad 1])-\max(x,y) \text{fsign}(x-y,1,-1)[1 \quad -1]\max(x,y)\\
&=\begin{cases}
[-y \quad x], &\text{if } x > y,\\
[y \quad -x], &\text{if } x \leq y,
\end{cases}
\end{align*}
i.e., the vector field has switched along $y=x$ from the $\M=\I_2$ case.
\end{example}

Next, {we introduce notions in a similar spirit to the class of ``piecewise continuous'' functions of, e.g., Cortes \cite{cortes2008discontinuous}  and Filippov \cite{filippov}, and the ``piecewise differentiable ($PC^1$)'' functions of Scholtes \cite{Scholtes}.}  We are motivated to do so because these classes remain broad while ensuring the  ``regularity'' of the lexicographic Lie bracket defined above under operations of interest to come in nonsmooth averaging theory.

\begin{definition}\label{def:pwsc_infinity}
{Given some open set $X \subseteq \real^n$, a function $\f:X\to\real^m$ is said to be piecewise discontinuous $C^\infty$ ($PWD^{\infty}$) if there exists a finite collection of open and connected sets $X_k$, $k=1,\ldots,q$, 
 with   $X_i \cap X_j = \emptyset$ for all $i \neq j$ and $\cup_{k=1}^{q} \text{cl}(X_k)=X$,  such that $\f$ is $C^{\infty}$ on $X_k$ for each $k$ and admits a  continuous extension to $\text{cl}(X_k)$. The function $\f$ is said to be piecewise continuous $C^\infty$ ($PWC^{\infty}$) if, in addition, $\f$ is continuous on $X$.}
\end{definition}

Note that $PWD^{\infty}$ extends the notion of ``piecewise continuity'' of, e.g. Cortes  \cite{{cortes2008discontinuous}}, and is closely related to piecewise smooth  maps with degree of smoothness one of di Bernardo et al.\  \cite{bernardo2008piecewise}, while $PWC^{\infty}$ is a subset of the $PC^1$  class of functions (as will be shown in \cref{prop:PWC_implies_L-smooth}) set forth by Scholtes \cite{Scholtes}. Moreover, compositions (and therefore sums, products, etc.) of $PWC^{\infty}$ (or $PWD^{\infty}$) functions are $PWC^{\infty}$ (or $PWD^{\infty}$), respectively.
Note also that in either case, the set of discontinuous points (or non-differentiable points) are restricted to a set of measure zero, $Z_{\f}$, contained in the boundaries of the ``pieces'', i.e., $Z_{\f} \subset \cup_{i=1}^{q} \text{bd}(X_k)$.

\begin{proposition}\label{prop:PWC_implies_L-smooth}
    Let $\f:X \to \real^m$, $X \subseteq \real^n$ open, be a $PWC^{\infty}$ function. Then, $\f$ is $PC^1$, and therefore L-smooth, on $X$.
\end{proposition}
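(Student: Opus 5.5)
We verify Scholtes' definition of a $PC^1$ function directly and then invoke the known inclusion of $PC^1$ functions in the L-smooth class. Recall that $\f$ is $PC^1$ at $\x\in X$ if $\f$ is continuous on a neighborhood $U$ of $\x$ and there is a finite family of $C^1$ functions $\f_1,\dots,\f_r:U\to\real^m$ (the selection functions) with $\f(\y)\in\{\f_1(\y),\dots,\f_r(\y)\}$ for every $\y\in U$. Continuity is part of the hypothesis that $\f$ is $PWC^{\infty}$, so the work is to build the $C^1$ selection functions from the pieces $X_k$.

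First, fix $\x\in X$ and let $K(\x)=\{k:\x\in \text{cl}(X_k)\}$. Since there are finitely many pieces and each $\text{cl}(X_k)$ is closed, some ball $U=B_\delta(\x)$ meets only the pieces $\text{cl}(X_k)$ with $k\in K(\x)$. Because $\cup_k\text{cl}(X_k)=X$, every $\y\in U$ lies in $\text{cl}(X_k)$ for some $k\in K(\x)$. By continuity of $\f$ and of the continuous extension $\f_k$ of $\f|_{X_k}$ to $\text{cl}(X_k)$, we get $\f(\y)=\f_k(\y)$ there. Hence $\f(\y)\in\{\f_k(\y):k\in K(\x)\}$ on $U$.

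Second, we need each selection $\f_k$ to be $C^1$ on a full neighborhood $U$, not only on $\text{cl}(X_k)\cap U$. This is the main obstacle, because the definition of $PWC^{\infty}$ grants only a continuous extension to the closure. We would proceed as follows.
\begin{itemize}
\item We read the $PWC^{\infty}$ hypothesis in the sense intended in \cref{def:pwsc_infinity}, namely that each piece function extends to a $C^\infty$ function on an open set containing $\text{cl}(X_k)$. This is the situation for the paper's examples (abs, min, max, and compositions of these with smooth maps), and the proof should state this reading explicitly.
\item Without that reading, one would need a Whitney-type extension. Such an extension requires the derivatives of $\f|_{X_k}$ to extend continuously to $\text{cl}(X_k)$ and some regularity of the boundary, for instance Whitney regularity, which gives a $C^1$ (indeed $C^\infty$) extension $\tilde\f_k$ to $U$.
\end{itemize}
In either case we use the resulting smooth functions $\tilde\f_k$, $k\in K(\x)$, as the selection functions on $U$. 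This gives $\f\in PC^1$ at $\x$; since $\x$ was arbitrary, $\f$ is $PC^1$ on $X$.

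Finally, we invoke the standard facts from Scholtes \cite{Scholtes} and Nesterov \cite{nesterov2005lexicographic}, \cite{khan2015vector}. A $PC^1$ function is locally Lipschitz and directionally differentiable, and its directional derivative $\f'(\x;\cdot)$ is itself piecewise linear, i.e., a continuous selection of linear functions. Each further homogenization step again yields a piecewise linear function, so directional derivatives of all orders exist. This is exactly the L-smoothness condition of \cref{def:L-smoothFunction}, and the cited papers state the inclusion $PC^1\subset$ L-smooth, so it can be quoted rather than reproved.
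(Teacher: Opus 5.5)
Your argument follows the same idea as the paper's: the smooth piece functions serve as selection functions. The paper's proof stops there. It takes $\f_k:X_k\to\real^m$, defined only on the open pieces, asserts the selection property on all of $X$, and concludes $PC^1$. It never produces $C^1$ selections on a neighborhood of a boundary point, and never establishes local Lipschitz continuity. You were right to flag this, and the issue is decisive rather than technical. If each piece only has a continuous extension to $\text{cl}(X_k)$, as \cref{def:pwsc_infinity} states, the proposition is false.

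\textbf{Counterexample 1.} Take $X=\real$, $X_1=(0,\infty)$, $X_2=(-\infty,0)$ and $f(x)=\sqrt{|x|}$. Then $f$ is $C^\infty$ on each piece, extends continuously to each closure, and is continuous, so it is $PWC^\infty$. But $f$ is not Lipschitz near $0$, and $f'(0;1)$ is infinite. Hence $f$ is neither $PC^1$ nor L-smooth.

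\textbf{Counterexample 2.} Adding Lipschitz continuity does not rescue the $PC^1$ claim. Take $f(x)=x^2\sin(1/x)$ for $x>0$ and $f(x)=0$ for $x\le 0$. This $f$ is $PWC^\infty$, Lipschitz, and even L-smooth, since $f'(0;\cdot)\equiv 0$. It is not $PC^1$ at $0$. On a dense subset of $(0,\delta)$, $f'$ would equal the derivative of one of finitely many $C^1$ selections. That would force $f'$ to cluster near finitely many values as $x\downarrow 0$. Yet $f'(x)=2x\sin(1/x)-\cos(1/x)$ takes every value in $[-1,1]$ on each interval $(0,\delta)$.

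So the hypothesis you introduce in your second step is genuinely needed, and it must act at the level of derivatives. State it as an amended assumption, not as a ``reading'' of the existing definition. For example: each $\f|_{X_k}$ extends to a $C^1$ map $\tilde{\f}_k$ on an open set $O_k\supset\text{cl}(X_k)$.

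Under that assumption your proof is complete after two small additions:
\begin{itemize}
\item Shrink $U$ to $U\cap\bigcap_{k\in K(\x)}O_k$, so that every $\tilde{\f}_k$ is defined and $C^1$ on $U$.
\item Note that $\tilde{\f}_k=\f$ on $U\cap\text{cl}(X_k)$, because both are continuous there and agree on the dense subset $X_k$.
\end{itemize}
Your Whitney variant is acceptable as a remark, with the caveats you already list: a continuous extension of $\J(\f|_{X_k})$ to the closure, plus a regularity condition on $\text{cl}(X_k)$ near $\x$. The final implication from $PC^1$ to L-smooth can simply be cited.
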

\begin{proof}
    From \cref{def:pwsc_infinity}, there exists finite collection of open and connected sets $X_k$, $k=1,\ldots,q$ such that $\f$ is $C^{\infty}$ on $X_k$ for each $k$. Hence, there exists a finite set of functions $\f_k:X_k\to\real^m$, $k=1,\dots,q$, such that the inclusion $\f(\x)\in\{\f_1(\x),\f_2(\x),\dots,\f_q(\x)\}$ holds for every $\x\in X$, and the result follows.
\end{proof}


For use later in this paper, we make the following observations.

\begin{lemma}\label{lemma:PWSC_implies_PWC}
Let $\f:X \times \real \to \real^n$ be $PWC^{\infty}$, with $X \subseteq \real^n$ open and $Z_{\f} \subset X \times \real$  its zero-measure subset of discontinuities.
Then, for any $T>0$ and compact set $D \subset X$, the function
$$\h(\x):=
\begin{cases}
\int_0^T \frac{\partial \f}{\partial \x}(\x,t)dt, & \text{if } (\x,t) \notin Z_{\f},\\
\zero, & \text{if } (\x,t) \in Z_{\f},
\end{cases}$$
is $PWD^{\infty}$ on $D$.
\end{lemma}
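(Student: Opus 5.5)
The plan is to use the piece structure that \cref{def:pwsc_infinity} provides for $\f$ on $X\times\real$. Let $W_k$, $k=1,\ldots,q$, be the open connected pieces on which $\f$ is $C^\infty$ and extends continuously to $\text{cl}(W_k)$. Fix $\x$. The integrand $t\mapsto \frac{\partial \f}{\partial \x}(\x,t)$ is defined for almost every $t\in[0,T]$ whenever the slice $\{t:(\x,t)\in Z_{\f}\}$ has measure zero. On each piece the integrand is smooth, so $\h(\x)=\sum_{k}\int_{I_k(\x)}\frac{\partial \f_k}{\partial \x}(\x,t)\,dt$, where $I_k(\x)=\{t\in[0,T]:(\x,t)\in W_k\}$ and $\f_k$ denotes the smooth branch on $W_k$. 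The convention $\h=\zero$ is only needed on the exceptional set of $\x$ whose whole slice lies in $Z_{\f}$, or where the slice has positive measure.

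The next step is to partition $D$ into regions with constant combinatorics. I will call two points $\x,\tilde\x$ equivalent when the ordered sequence of pieces $W_k$ crossed by the segment $\{\x\}\times[0,T]$ is the same for both. On each equivalence class, $I_k(\x)$ is a finite union of intervals $[a_{kj}(\x),b_{kj}(\x)]$ whose endpoints lie on $\text{bd}(W_k)$. I would then write $\h$ on that class as $\sum_{k,j}\int_{a_{kj}(\x)}^{b_{kj}(\x)}\frac{\partial \f_k}{\partial \x}(\x,t)\,dt$. Differentiating under the integral sign (Leibniz) shows this is $C^\infty$ provided the endpoint functions $a_{kj},b_{kj}$ are $C^\infty$ in $\x$. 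The same formula is continuous up to the closure of the class, because the branches extend continuously to $\text{cl}(W_k)$ and $D$ is compact. Compactness of $D$ also keeps everything bounded and ensures that only finitely many classes occur.

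The main obstacle is this regularity of the switching times $a_{kj}(\x),b_{kj}(\x)$, together with the requirement that the classes be finitely many open connected sets whose closures cover $D$. \cref{def:pwsc_infinity} gives no regularity of the boundaries $\text{bd}(W_k)$ themselves. My first attempt would be to read the piece boundaries as level sets of the smooth branch differences and use the implicit function theorem where crossings are transversal. Non-transversal (tangential) crossings and the exceptional zero-measure slices would be collected into a closed set of measure zero. The open classes would then be taken to be the connected components of the complement of that set, intersected with $\text{int}(D)$. If transversality cannot be extracted from the definition, I would instead argue that in the intended usage the pieces are cut out by finitely many smooth functions, and make that the working interpretation of the $PWC^\infty$ structure. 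This is the only point where the proof needs more than bookkeeping, so I would state that assumption explicitly rather than hide it.

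Finally, I expect that in the typical case $Z_{\f}$ is a product-type set (nonsmoothness in $\x$ only, at $t$-independent loci). In that case the classes are just the $\x$-projections of the pieces, the endpoints are constants $0$ and $T$, and the argument reduces to differentiation under the integral on each piece. That gives the $PWD^\infty$ conclusion immediately, with discontinuities of $\h$ allowed across the piece boundaries, which is why only $PWD^\infty$ rather than $PWC^\infty$ is claimed.
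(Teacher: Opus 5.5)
You have located the right difficulty, but the proposal does not get past it, and two of its steps are false as stated.

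First, compactness of $D$ does not ``ensure that only finitely many classes occur.'' Nothing in \cref{def:pwsc_infinity} can ensure it, because without further structure the lemma itself fails. Take $n=1$, $T=1$, and $\varphi(t)=e^{-1/t}\sin(1/t)$ for $t>0$, $\varphi(t)=0$ for $t\le 0$, which is $C^\infty$. Let $f(x,t)=|x-\varphi(t)|$ with pieces $\{x>\varphi(t)\}$ and $\{x<\varphi(t)\}$. The branches $\pm(x-\varphi(t))$ are smooth on all of $\real^2$, so $f$ is $PWC^\infty$ with branches smooth even across the piece boundaries. Then $h(x)=\int_0^1\mathrm{sign}(x-\varphi(t))\,dt=1-2\mu\{t\in[0,1]:\varphi(t)>x\}$. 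The critical points of $\varphi$ in $(0,1)$ are given by $1/t_k=\pi/4+k\pi$, $k\ge 1$. They are nondegenerate, with pairwise distinct values $c_k=(-1)^k e^{-(\pi/4+k\pi)}/\sqrt{2}\to 0$. At each $c_k$ the superlevel-set measure acquires a one-sided term of order $\sqrt{|x-c_k|}$, so $h$ is not $C^1$ there. Infinitely many such points in $D=[-1,1]$ rule out any finite decomposition into open intervals on which $h$ is $C^\infty$.

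In your language, the tangential crossings project onto the closed null set $\{c_k\}\cup\{0\}$, whose complement in $D$ has infinitely many components, so your first remedy fails. Your fallback, pieces cut out by finitely many smooth functions, already holds here, since the boundary is the regular zero set of $x-\varphi(t)$. What is needed is a hypothesis controlling the $\x$-projection of the non-transversal contacts between $t$-slices and piece boundaries, for example that it lies in finitely many hypersurfaces.

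Second, you claim your formula is continuous up to the closure of a class. That requires $\partial\f_k/\partial\x$ to extend continuously to $\text{cl}(W_k)$, and, through the Leibniz boundary terms, its higher $\x$-derivatives too. The definition provides this only for $\f_k$ itself. Consider the $t$-independent function $f=x^2\sin(1/x)$ for $x>0$, $f=0$ for $x\le 0$, which is Lipschitz and $PWC^\infty$. It gives $h(x)=T\bigl(2x\sin(1/x)-\cos(1/x)\bigr)$ on $x>0$, which has no continuous extension to $0$. So even your product-type case needs branches that are smooth up to the boundary of their pieces.

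The paper argues differently. It bounds $\partial\f/\partial\x$ on $D\times[0,T]$ and uses dominated convergence to write $\h=\frac{\partial}{\partial\x}\int_0^T\f(\x,t)\,dt$. It then asserts that $\int_0^T\f(\cdot,t)\,dt$ is $PWC^\infty$, so that its derivative is $PWD^\infty$. That route never mentions switching times, but only because the hard step is asserted rather than proved. In the first example, $\int_0^1|x-\varphi(t)|\,dt$ is $C^1$ but not $C^2$ at any $c_k$, hence not $PWC^\infty$. The paper's claim that $\partial\f/\partial\x$ extends continuously to $Z_{\f}$ is exactly what the second example violates. Your slice-by-slice decomposition therefore diagnoses the problem more accurately than the paper's proof does, but neither argument establishes the lemma as written. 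A correct version must add explicit hypotheses of both kinds above, and the ones you floated are not sufficient for the first.
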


\begin{proof}
As $\f$ is $PWC^{\infty}$ (with sets $X_k$), it is $C^{\infty}$ almost everywhere, i.e., for all $(\x,t) \in (X \times \real) \setminus Z_{\f}$. Hence, the partial derivative $\frac{\partial \f}{\partial \x}$ exists for almost all $(\x,t) \in (X \times \real)$, and is $C^{\infty}$ for all $(\x,t) \in (X \times \real) \setminus Z_{\f}$. In fact, $\frac{\partial \f}{\partial \x}$ is $PWD^{\infty}$ on $ X \times\real$, with the same pieces $X_k$ as admitted by $\f$. Because $\frac{\partial \f}{\partial \x}$ admits a continuous extension to $Z_{\f}$, it follows that there exists $M$ such that
$$\left|\frac{\partial \f}{\partial \x}(\x,t)\right| \leq M \quad \text{a.e. } (\x,t) \in D \times [0,T].$$
Moreover, 
by the dominated convergence theorem, we have that
$$\int_0^T \frac{\partial \f}{\partial \x}(\x,t)dt=\frac{\partial}{\partial \x} \int_0^T \f(\x,t)dt.$$
The result follows by noting that if $\f$ is $PWC^{\infty}$, then so is $\g(\x)=\int_0^T \f(\x,t)dt$, and its partial derivative will be $PWD^{\infty}$ by the same arguments as above.
\end{proof} 

\begin{lemma}\label{lemma:PWSC_implies_PWC.new}
Let $\f:X \times \real \to \real^n$ be $PWC^{\infty}$, with $X \subseteq \real^n$ open and $Z_{\f} \subset X \times \real$  its zero-measure subset of nondifferentiability. Define the following function:
\begin{equation}\label{eq:Jacobian_Like_dropping_zero}
    \widetilde{\J}_{\x}\f(\x,t):=
\begin{cases}
 \frac{\partial \f}{\partial \x}(\x,t), & \text{if } (\x,t) \notin Z_{\f},\\
\zero, & \text{if } (\x,t) \in Z_{\f}.
\end{cases} 
\end{equation}
Then, for any $\x \in X$, the function
$\h(t):=\int_0^t \widetilde{\J}_{\x}(\x,s)ds $ 
is absolutely continuous. 
\end{lemma}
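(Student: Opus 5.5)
The plan is to show that, for fixed $\x\in X$, the map $s\mapsto \widetilde{\J}_{\x}\f(\x,s)$ is Lebesgue integrable on every compact interval. Absolute continuity of $\h$ then follows from the standard fact that an indefinite Lebesgue integral $t\mapsto\int_0^t g(s)\,ds$ of a locally integrable $g$ is absolutely continuous. Concretely, given $\varepsilon>0$, absolute continuity of the integral yields $\delta>0$ such that $\sum_i\int_{a_i}^{b_i}\|g\|\,ds<\varepsilon$ whenever $\sum_i(b_i-a_i)<\delta$. In fact we will obtain a uniform bound, so $\h$ will even be Lipschitz on compact intervals, which implies absolute continuity directly.

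\textbf{Step 1 (boundedness).} Fix $\x\in X$ and a compact interval $[0,\tau]$ (or $[\tau,0]$ for negative $t$). Choose a compact neighborhood $K=\text{cl}(B_r(\x))\subset X$. As in the proof of \cref{lemma:PWSC_implies_PWC}, $\f$ is $C^\infty$ on each piece $X_k$ and admits a continuous extension to $\text{cl}(X_k)$. I would argue that $\frac{\partial\f}{\partial\x}$ restricted to each piece also extends continuously to the closure, exactly as is used in that lemma. Since the pieces are finite in number and $K\times[0,\tau]$ is compact, we get $M$ with $\|\widetilde{\J}_{\x}\f(\y,s)\|\le M$ on $(K\times[0,\tau])\setminus Z_{\f}$. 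On $Z_{\f}$ the function is $\zero$ by definition \eqref{eq:Jacobian_Like_dropping_zero}, so the bound holds everywhere.

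\textbf{Step 2 (measurability).} The set $Z_{\f}$ is contained in the finite union of the boundaries $\cup_k \text{bd}(X_k)$, and off $Z_{\f}$ the function equals a continuous function piecewise. Hence $s\mapsto\widetilde{\J}_{\x}\f(\x,s)$ is a finite sum of terms of the form (continuous function)$\times\mathbbm{1}_{A}$, where each $A$ is the slice $\{s:(\x,s)\in X_k\}$, an open set, or a slice of a boundary set (closed in $X\times\real$). Every such term is Borel measurable. Together with Step 1, this gives integrability on $[0,\tau]$.

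\textbf{Step 3 (conclusion).} For $0\le t_1<t_2\le\tau$ we have $\|\h(t_2)-\h(t_1)\|\le M|t_2-t_1|$, so $\h$ is Lipschitz and hence absolutely continuous on every compact interval.

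\textbf{Main obstacle.} The step I expect to be delicate is Step 2 together with the slicing in Step 1. The set $Z_{\f}$ has measure zero in $X\times\real$, but its slice at fixed $\x$ need not have measure zero in $\real$; for example, a piece boundary may contain a horizontal segment $\{\x\}\times I$. This does not break the argument, because on such slices the integrand is defined to be $\zero$, which remains bounded and measurable. Only the value of $\h$ is affected, not its absolute continuity. So I would handle this issue by simply noting that the bound in Step 1 and the measurability in Step 2 never require the slice to be null.
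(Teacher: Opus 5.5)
Your route is the same as the paper's: for fixed $\x$, show that $s\mapsto\widetilde{\J}_{\x}\f(\x,s)$ is measurable and bounded on compact intervals, hence locally integrable, so its indefinite integral is absolutely continuous (indeed locally Lipschitz). Your observation that the slice $\{s:(\x,s)\in Z_{\f}\}$ need not be null, and that this is harmless because the integrand is set to $\zero$ there, is correct; the paper does not address it.

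The step that really needs care is Step 1, not the one you flag. \cref{def:pwsc_infinity} only requires $\f$ itself to extend continuously to each $\text{cl}(X_k)$. It says nothing about $\frac{\partial\f}{\partial\x}$, so there is nothing from which to ``argue'' that the derivatives extend, and local boundedness can genuinely fail. Take $f(x,t)=\sqrt{|x-t^2|}$ on $\real\times\real$. It is $PWC^{\infty}$ with pieces $\{x>t^2\}$, $\{x<t^2\}$ and $Z_f=\{x=t^2\}$. Yet for $s\neq 0$ the point $(0,s)$ lies off $Z_f$ and $\widetilde{\J}_{x}f(0,s)=-1/(2|s|)$, which is not integrable on $[0,t]$ for any $t>0$. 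So $\h$ is not even defined at $x=0$, and the lemma, read literally, is false. The paper's own proof has exactly this hole: it simply asserts that $\frac{\partial\f}{\partial\x}$ is locally bounded. The proof of \cref{lemma:PWSC_implies_PWC}, which you lean on, likewise assumes without support from the definition that the derivative extends continuously.

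To make your argument (and the paper's) rigorous, state the extra hypothesis that is being used tacitly. One option is that $\f$ is locally Lipschitz; \cref{prop:PWC_implies_L-smooth} needs this anyway, since the example above is not Lipschitz near the parabola. Another is that each $\f|_{X_k}$ is the restriction of a $C^1$ map defined on a neighbourhood of $\text{cl}(X_k)$. Under local Lipschitz continuity, Step 1 is one line: wherever $\frac{\partial\f}{\partial\x}(\x,s)$ exists, its norm is at most the Lipschitz constant of $\f$ on a compact neighbourhood of $\{\x\}\times[0,\tau]$.

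A smaller point in Step 2: at points of $\text{bd}(X_k)\setminus Z_{\f}$ the integrand is the actual partial Jacobian, not a term of the ``continuous function times indicator'' form you describe. Measurability still holds, because $Z_{\f}$ is Borel (the differentiability set of a continuous map is Borel). Off $Z_{\f}$ the partial derivatives are pointwise limits of continuous difference quotients. With these repairs, Steps 2 and 3 go through as written.
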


\begin{proof}

Since $\f$ is $PWC^{\infty}$, 
$\frac{\partial \f}{\partial \x}$ exists a.e. and is locally bounded. 
It follows that, for fixed $\x \in X$ and $T>0$, the function $\widetilde{\J}_{\x}\f(\x,\cdot)$ is measurable and essentially bounded on $[0,T]$. Thus, $\widetilde{\J}_{\x}\f(\x,\cdot) \in L^1([0,T])$ and it follows that $\h(t)$ is absolutely continuous.
\end{proof}

\begin{lemma}\label{prop:LDerivative_PWC_PWD}
Let $\f:X \times \real\to\real^n$ be $PWC^{\infty}$, with $X \subseteq\real^n$ open. Then, for any $t\in\real$ and directions matrix $\M\in\real^{n \times k}$, the function
\begin{equation}\label{eq.h.JL}
\h(\x):=\J^{\rm L}\f(\x,t;(\M,\zero))
\end{equation}
is $PWD^{\infty}$ on $X$.
\end{lemma}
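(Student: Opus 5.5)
The plan is to show that on each open piece of $\f$ the L-derivative coincides with the classical partial Jacobian, and then to refine the partition along the lower-dimensional boundary pieces so that on every refined piece the lexicographic selection is made by a fixed smooth piece.

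Fix $t\in\real$ and $\M\in\real^{n\times k}$. By \cref{def:pwsc_infinity}, there are disjoint open connected sets $W_j\subset X\times\real$, $j=1,\ldots,q$, with $\cup_j \text{cl}(W_j)=X\times\real$. On each $W_j$ the function $\f$ is $C^\infty$ and admits a continuous extension $\f_j$ to $\text{cl}(W_j)$. Consider the slices $X_j(t):=\{\x:(\x,t)\in W_j\}$. These are open in $X$, although possibly disconnected; I would split them into their connected components, and I will assume finitely many of these arise.

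\textbf{Step 1 (interior of slices).} For $\x\in X_j(t)$, the function $\f$ is $C^1$ near $(\x,t)$. Hence, by the remark following \eqref{eq.LD.L.derivative} (the LD-derivative recovers the classical directional derivative), $\f'(\x,t;(\M,\zero))=\J\f(\x,t)(\M,\zero)$. It follows that $\h(\x)=\frac{\partial\f}{\partial\x}(\x,t)$ whenever $\M$ has full row rank. I would handle the non-full-rank case by interpreting $\J^{\rm L}$ through the homogenization sequence, exactly as in \eqref{eq.h.JL}. On $X_j(t)$ this expression is $C^\infty$, and it extends continuously to the closure because the derivatives of the $C^\infty$ extension are continuous up to the boundary. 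This extension property is built into $PWC^\infty$ as used in \cref{lemma:PWSC_implies_PWC}, where it is asserted that $\frac{\partial\f}{\partial\x}$ is $PWD^\infty$ with the same pieces.

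\textbf{Step 2 (boundary points).} The remaining points $\x\in X\setminus\cup_j X_j(t)$ lie in the boundaries of the slices. There $\f$ is $PC^1$ by \cref{prop:PWC_implies_L-smooth}, with active selections $\f_j$. By standard $PC^1$ theory (Scholtes), the L-derivative equals $\J_{\x}\f_{j^*}$ for an essentially active selection $j^*=j^*(\x,\M)$. This index is determined lexicographically: it is the piece into which the directions $\m_1,\m_2,\ldots$ successively point. On any relatively open subset of a boundary stratum where this lexicographic choice is constant, $\h$ coincides with the $C^\infty$ function $\J_{\x}\f_{j^*}$, restricted and then extended. So I would refine the partition by adjoining these strata, grouped by the selected index. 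The boundary set has measure zero, so one can alternatively just note that \cref{def:pwsc_infinity} requires the closures of the open pieces to cover $X$. Assigning the boundary values to the adjacent open piece determined by the lexicographic selection then yields a partition into the open sets $X_j(t)$ on which $\h$ is $C^\infty$ with continuous extension. The boundary values only affect a measure-zero set, which is allowed because $PWD^\infty$ permits arbitrary discontinuity there.

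\textbf{Main obstacle.} The delicate point is making Step 2 rigorous: showing that $\h$ on the boundaries is given by some selection's Jacobian, and checking that the slices at fixed $t$ give finitely many open connected pieces whose closures cover $X$. I would first try to bypass any stratification by observing that the definition only constrains $\h$ on the open pieces and their closures through extensions. Since $PWD^\infty$ places no continuity requirement on $\h$ at boundary points themselves, Step 1 plus finiteness of the slice components already suffices.
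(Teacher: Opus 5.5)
Your Step 1 is essentially the paper's entire proof: on each open piece $\f$ is $C^\infty$, so the L-derivative reduces to the classical $\x$-Jacobian, which is $C^\infty$ there. Since $PWD^{\infty}$ constrains nothing on the measure-zero boundary, your Step 2 is unnecessary, and the paper does not attempt it. Incidentally, your value $\frac{\partial \f}{\partial \x}(\x,t)$ is the correct L-derivative, whereas the paper writes $\frac{\partial \f}{\partial \x}(\x,t)\M$, which is really the LD-derivative; either is $C^\infty$ on the piece.

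The caveats you raise are genuine, but the paper shares them. It tacitly assumes that the fixed-$t$ slices give finitely many pieces whose closures cover $X$, and that $\frac{\partial \f}{\partial \x}$ extends continuously to each closure. The definition of $PWC^{\infty}$ only grants continuous extension for $\f$ itself, and $f(x,t)=x^2\sin(1/x)$, $f(0,t)=0$, shows the lemma genuinely needs that stronger reading. So your argument stands at the same level of rigor as the paper's.
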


\begin{proof}
Suppose the function $\f$ admits the   $PWC^{\infty}$  construction with open and connected sets $X_k$, $k=1,\ldots,q$ such that $\f$ is $C^\infty$ on each $Z_k$. Then, observing that,
\begin{align*}
\J^{\rm L}\f(\x,t;(\M,\zero))
&=\J \f(\x,t)\begin{bsmallmatrix} \M \\ \zero \end{bsmallmatrix} \quad \text{a.e.}\\
&=\frac{\partial \f}{\partial \x}(\x,t)\M+\frac{\partial \f}{\partial t}(\x,t)\zero \quad \text{a.e.}\\
&=\frac{\partial \f}{\partial \x}(\x,t)\M  \quad \text{a.e.},
\end{align*}
i.e., $\h(\x)=\frac{\partial \f}{\partial \x}(\x,t)\M $ for $(\x,t) \in X_k$, it follows that $\h$ is $PWD^{\infty}$.
\end{proof}

We are now ready to give the main result of this section, concerning the integral of an L-Lie bracket.

\begin{proposition}\label{lemma.llie} 
Let $\f,\g:X \times \real \to \real^n$ be $PWC^{\infty}$, with  $X \subseteq\real^n$ open.  Then, for any $T>0$ and directions matrix $\M\in\real^{n \times k}$ and compact subset $D \subset X$, the function 
 \begin{align}\label{eq:lemma.llie}
 \h(\x)&:=\int_0^T [\f,\g]_{\rm L}(\x,t;(\M,\zero))dt
 \end{align}
is $PWD^{\infty}$ on $D$.  
\end{proposition}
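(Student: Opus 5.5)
We unfold the definition of the L-Lie bracket and reduce the claim to the earlier lemmas on L-derivatives and on integrals of piecewise-smooth functions. By \cref{def:lexicographicliebracket}, for each fixed $t$,
\begin{equation*}
[\f,\g]_{\rm L}(\x,t;(\M,\zero))=\J^{\rm L}\g(\x,t;(\M,\zero))\,\f(\x,t)-\J^{\rm L}\f(\x,t;(\M,\zero))\,\g(\x,t).
\end{equation*}
(Here the bracket is taken in the state variable, the $t$-direction being suppressed by the zero block.) By \cref{prop:LDerivative_PWC_PWD}, the maps $\x\mapsto\J^{\rm L}\f(\x,t;(\M,\zero))$ and $\x\mapsto\J^{\rm L}\g(\x,t;(\M,\zero))$ are $PWD^{\infty}$. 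In fact its proof shows more: they coincide almost everywhere in $(\x,t)$ with $\frac{\partial \g}{\partial \x}(\x,t)\M$ and $\frac{\partial \f}{\partial \x}(\x,t)\M$, on the same pieces $X_k$ as $\f,\g$. We use a common refinement of the pieces of $\f$ and $\g$, namely the nonempty pairwise intersections, which are finitely many.

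On each refined piece, the integrand is a product and difference of $C^\infty$ functions that admit continuous extensions to the closures. It is therefore $PWD^{\infty}$ jointly in $(\x,t)$, and it agrees almost everywhere with
\begin{equation*}
\mathbf{k}(\x,t):=\tfrac{\partial \g}{\partial \x}(\x,t)\M\f(\x,t)-\tfrac{\partial \f}{\partial \x}(\x,t)\M\g(\x,t).
\end{equation*}
Since $D\times[0,T]$ is compact and each piece's extension is continuous on the closure, $\mathbf{k}$ is essentially bounded there. For each fixed $\x$, the $t$-section $\{t:(\x,t)\in Z\}$ of the zero-measure nondifferentiability set $Z$ is a.e.\ null. (For boundaries of finitely many open pieces this holds except on a lower-dimensional set of $\x$; we handle it exactly as in \cref{lemma:PWSC_implies_PWC} by assigning the value $\zero$ there.) The integral $\h(\x)$ is thus well defined by integrating $\mathbf{k}(\x,\cdot)$, which is measurable and bounded as in \cref{lemma:PWSC_implies_PWC.new}.

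Next we establish the piecewise structure of $\h$ in $\x$. We mimic \cref{lemma:PWSC_implies_PWC}: fix $\x$ in the interior of a region where the collection of $t$-intervals $\{t\in[0,T]:(\x,t)\in X_k\}$ varies smoothly. On such a region, $\h(\x)=\sum_k\int_{a_k(\x)}^{b_k(\x)}\mathbf{k}_k(\x,t)\,dt$, where $\mathbf{k}_k$ is the $C^\infty$ extension on piece $k$. Differentiation under the integral sign, justified by dominated convergence with the uniform bound above, together with Leibniz's rule for the endpoints, gives $C^\infty$ dependence on $\x$. The finitely many $\x$-regions, bounded by the projections of the piece boundaries and their tangency sets, give the pieces of $\h$. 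Continuous extension to their closures follows from the continuity of the extended $\mathbf{k}_k$ and the boundedness on compact $D\times[0,T]$.

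The main obstacle is this last step: showing that the $t$-sections of the pieces depend smoothly on $\x$, with finitely many regions. The definition of $PWD^{\infty}$ gives no regularity of the piece boundaries. I would handle it as \cref{lemma:PWSC_implies_PWC} does, by appealing to the fact that integrating in $t$ a $PWC^{\infty}$ function such as $\int_0^T\f\,dt$ preserves the class. Concretely, I would write $\mathbf{k}$ on each piece as an a.e.\ partial derivative, $\frac{\partial\g}{\partial\x}\M\f=\frac{\partial}{\partial\x}(\ldots)$ where possible, and otherwise directly invoke that argument applied to the products. I would accept that route, consistent with the paper's level of rigor.
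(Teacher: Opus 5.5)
Your reduction is the same as the paper's. You identify the bracket almost everywhere with the classical expression built from $\frac{\partial\f}{\partial\x}\M$ and $\frac{\partial\g}{\partial\x}\M$, use closure of the class under products, and then rely on \cref{lemma:PWSC_implies_PWC}, i.e.\ on the claim that integrating out $t$ preserves the piecewise class. The step you flag as the main obstacle and propose to accept is a genuine gap, and it cannot be filled. With \cref{def:pwsc_infinity} as written (finitely many open \emph{connected} pieces) the statement is false, so deferring to the paper's level of rigor does not rescue it.

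\textbf{Counterexample.} Take $n=k=1$, $\M=1$, $T=1$, and $\phi(x)=e^{-1/x^{2}}\sin(1/x)$ with $\phi(0)=0$, so $\phi\in C^{\infty}$ and $|\phi|<1$. Let $f(x,t)=|t-\phi(x)|$ and $g\equiv1$. Then $f$ is $PWC^{\infty}$ with the two connected pieces $\{t>\phi(x)\}$ and $\{t<\phi(x)\}$. For $t\neq\phi(x)$ (a single $t$ for each $x$) the bracket equals $-\partial_x f(x,t)=\operatorname{sign}(t-\phi(x))\,\phi'(x)$, so
\begin{equation*}
h(x)=\phi'(x)\int_0^1\operatorname{sign}(t-\phi(x))\,dt=\bigl(1-2\max\{\phi(x),0\}\bigr)\,\phi'(x).
\end{equation*}
At each zero $x_j=1/(j\pi)$ of $\phi$ one has $\phi'(x_j)=(-1)^{j+1}j^{2}\pi^{2}e^{-j^{2}\pi^{2}}\neq0$. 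So $\phi$ changes sign there, and the one-sided derivatives of $h$ at $x_j$ differ by $2\phi'(x_j)^{2}\neq0$. Hence $h$ is non-differentiable at infinitely many points of $D=[-1,1]$. A $PWD^{\infty}$ function on $D$, however, can fail to be smooth only at the finitely many endpoints of its pieces, since connected open subsets of $\real$ are intervals. Because $\int_0^1\partial_x f\,dt=-h$, the same example refutes \cref{lemma:PWSC_implies_PWC}, on which the paper's proof rests.

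\textbf{Further defects in your sketch.} Your common-refinement step has the same problem. Pairwise intersections of connected pieces are finitely many sets but need not be connected: the piece $\{t>0\}$ of $|t|$ meets the piece $\{t<\phi(x)\}$ of $|t-\phi(x)|$ in a set with infinitely many components. So they do not give a $PWD^{\infty}$ decomposition, and in fact $|t|\,|t-\phi(x)|$ is not $PWD^{\infty}$. Also, \cref{def:pwsc_infinity} provides continuous extensions of $\f$, not of $\partial\f/\partial\x$, to the closures of the pieces. So the extended integrands $\mathbf{k}_k$ you integrate are not guaranteed to exist.

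\textbf{What would make it work.} Your Leibniz-rule sketch becomes a proof under an added hypothesis on the switching geometry, for instance:
\begin{itemize}
\item the pieces of $\f$ and $\g$ are cut out by finitely many smooth functions $s_j(\x,t)$ with $\partial_t s_j\neq0$ on $\{s_j=0\}$;
\item the restriction of $\f$ and $\g$ to each piece extends to a $C^{\infty}$ function on a neighborhood of its closure;
\item the sets of $\x\in D$ where two crossing times $\tau_j(\x)$ coincide, or a crossing time equals $0$ or $T$, divide $D$ into finitely many regions.
\end{itemize}
On each region, $\h(\x)=\sum_m\int_{\tau_m(\x)}^{\tau_{m+1}(\x)}\mathbf{k}_m(\x,t)\,dt$ is then $C^{\infty}$ by Leibniz's rule. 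The example above is transversal in $t$ but violates the finiteness condition, since its crossing time reaches $t=0$ at every $x_j$. So finiteness must be assumed, or derived from extra structure (e.g.\ real-analytic or semialgebraic data); it does not follow from \cref{def:pwsc_infinity}.
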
  

\begin{proof}
Observe that
\begin{align*}
[\f,\g]_{\rm L}(\x,t;(\M,\zero))
&=\J^{\rm L}\f(\x,t;(\M,\zero)) \g(\x,t)-\J^{\rm L}\g(\x,t;(\M,\zero))\f(\x,t)\\
&=\J\f(\x,t)\begin{bsmallmatrix} \M \\ \zero \end{bsmallmatrix} \g(\x,t)-\J\g(\x,t)\begin{bsmallmatrix} \M \\ \zero \end{bsmallmatrix}\f(\x,t) \quad \text{a.e.}\\
&=\frac{\partial \f}{\partial \x}(\x,t)\M \g(\x,t)-\frac{\partial \g}{\partial \x}(\x,t)\M \f(\x,t) \quad \text{a.e.}
\end{align*}
Hence,
$$\h(\x)=\int_0^{T} \left(\frac{\partial \f}{\partial \x}(\x,t) \M \g(\x,t)-\frac{\partial \g}{\partial \x}(\x,t)\M \f(\x,t)\right)dt$$
almost everywhere. The result then follows from the fact that products of $PWD^{\infty}$  and $PWC^{\infty}$ functions are $PWD^{\infty}$ functions and  \cref{lemma:PWSC_implies_PWC}.
\end{proof}
Recalling the LD-derivative notation established earlier, we note that $\M=\I$ (with appropriate dimensions) will be the case going forward in the paper. Hence, as a result of \eqref{eq.LD.L.derivative}, the LD-derivatives calculated below will be equal to L-derivatives. 
\section{{Nonsmooth averaging theory}}\label{sec:HigherOrderAvg}
 In this part, we establish first- and second-order averaging theory for the NLTP system written in the averaging-canonical form in \eqref{eq:average-canonical form_intro}. This is accomplished using a nonsmooth near-identity transformation.
\subsection{Nonsmooth near-identity transformations and first-order averaging}
Analogous to the smooth near-identity (classical) transformation \cite{sanders2007averaging,maggia2020higher}, we seek  a transformation that converts the states, represented by $\x$, in \eqref{eq:average-canonical form_intro} into an averaging system, represented by $\y$, whose associated governing equations can be made time-invariant. To do so, we  introduce  the following so-called Lipschitz continuous {near-identity} transformation: 
\begin{align}
\label{eq:Nonsmooth_x_y_transformation}
\x=\capu(\y,t,\epsilon)=\y+\epsilon\w(\y,t,\epsilon),
\end{align}
for some function $\w$ that is 
locally Lipschitz continuous in $\y$ and $t$, and analytic in $\epsilon$. 
 We prove the invertibility (and hence usefulness) of  the transformation
 via a similar rationale to the smooth near-identity transformation \cite[Section 2.9]{sanders2007averaging}, {with the classical implicit function theorem usage replaced by a nonsmooth implicit function theorem}.
 
\begin{theorem}\label{thm:NonsmoothTransformationValidity}(\textbf{Near-Identity Transformation}) 
{Consider the Lipschitz continuous transformation in} \eqref{eq:Nonsmooth_x_y_transformation}.
{Then, for} any bounded connected open set $X\subset\real^n$, there exists $\epsilon_0>0$  such that for all $t\in\real$ and for all $\epsilon \in [0,\epsilon_0]$, {the mapping $\y\mapsto \capu(\y, t, \epsilon)$ carries $X$}   one-to-one  and onto its image $\capu(X, t, \epsilon)$,  with the following inverse mapping:
\begin{align}\label{eq:inverse_mapping_x_y}
    \y=\capv(\x, t, \epsilon) = \x+\epsilon \vv(\x, t, \epsilon)
\end{align}
for some function $\vv$, such that $\capv$ is  {locally}  Lipschitz continuous in $\x$ and $t$, and analytic in $\epsilon$.
\end{theorem}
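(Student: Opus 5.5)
The plan is to mimic the argument of \cite[Section 2.9]{sanders2007averaging} and replace the smooth implicit function theorem with a Lipschitz contraction argument; Clarke's nonsmooth implicit function theorem is an alternative for the local step.

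\textbf{Setup.} Fix a bounded connected open $X$ and a slightly larger compact set $K\supset \text{cl}(X)$. Since $\w$ is locally Lipschitz in $\y$, it has a uniform Lipschitz constant $L$ and a uniform bound $B$ on $K$ for $t$ in one period and $\epsilon\in[0,1]$. Here I assume, as in the smooth theory, that $\w$ is $T$-periodic, or at least uniformly Lipschitz and bounded in $t$; this uniformity is needed for an $\epsilon_0$ independent of $t$.

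\textbf{Injectivity.} For $\y_1,\y_2\in X$,
\[
\|\capu(\y_1,t,\epsilon)-\capu(\y_2,t,\epsilon)\|\ge (1-\epsilon L)\|\y_1-\y_2\|.
\]
Choosing $\epsilon_0<1/L$ therefore gives injectivity for every $t$ and every $\epsilon\in[0,\epsilon_0]$. The mapping is onto its image by definition, so $\capu(\cdot,t,\epsilon)$ is a bijection from $X$ onto $\capu(X,t,\epsilon)$.

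\textbf{Construction of the inverse.} For $\x$ in the image, $\y$ is the unique fixed point of
\[
\Phi_{\x}(\y)=\x-\epsilon\w(\y,t,\epsilon),
\]
which is a contraction with constant $\epsilon L<1$. One has to check that $\Phi_{\x}$ maps a suitable closed ball of $K$ into itself. This holds because $\|\Phi_{\x}(\y)-\x\|\le\epsilon B$, so shrinking $\epsilon_0$ keeps the iterates in $K$. Define $\vv(\x,t,\epsilon):=-\w(\capv(\x,t,\epsilon),t,\epsilon)$, so that $\capv=\x+\epsilon\vv$.

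\textbf{Regularity in $\x$ and $t$.} The standard estimate for parametrized contractions gives
\[
\|\capv(\x_1,t_1,\epsilon)-\capv(\x_2,t_2,\epsilon)\|\le \frac{\|\x_1-\x_2\|+\epsilon L_t|t_1-t_2|}{1-\epsilon L},
\]
where $L_t$ is the Lipschitz constant of $\w$ in $t$. Hence $\capv$ is locally Lipschitz in $(\x,t)$, and so is $\vv$, being a composition of Lipschitz maps. As a local cross-check, Clarke's implicit function theorem applied to $F(\x,\y)=\y+\epsilon\w(\y,t,\epsilon)-\x$ works: every element of $\pi_{\y}\partial_{\rm C}F$ has the form $\I+\epsilon\N$ with $\|\N\|\le L$, and is therefore invertible for $\epsilon L<1$.

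\textbf{Analyticity in $\epsilon$ (main obstacle).} This is the step I expect to be hardest, because no differentiability in $\y$ is available. I would try the following. Complexify $\epsilon$ to the disc $|\epsilon|<\epsilon_0$, with $\w$ analytic in $\epsilon$ there and Lipschitz in $\y$ uniformly in complex $\epsilon$. Run the Picard iteration
\[
\y_{k+1}=\x-\epsilon\w(\y_k,t,\epsilon),\qquad \y_0=\x .
\]
Each iterate is analytic in $\epsilon$, since analytic dependence on $\epsilon$ survives composition with $\w(\cdot,t,\epsilon)$ when the iterate itself is analytic. The iterates converge uniformly on the disc by the contraction estimate, and a uniform limit of analytic functions is analytic (Weierstrass).

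The delicate point is that composing $\w(\cdot,t,\epsilon)$ with an $\epsilon$-dependent argument preserves analyticity only if $\w$ is jointly analytic in $(\y,\epsilon)$, which it is not. If this fails, I would weaken the conclusion to a formal power series in $\epsilon$ whose coefficients are determined recursively; the coefficients require the L-derivatives of $\w$. Since only finitely many orders are needed later, I would state that $\capv$ is Lipschitz in $(\x,t)$ and admits an expansion $\capv=\x-\epsilon\w(\x,t,0)+\mathcal{O}(\epsilon^2)$. For that expansion, the first-order term follows from $\capv-\x=-\epsilon\w(\capv,t,\epsilon)$ together with the Lipschitz bound $\|\capv-\x\|\le\epsilon B$.
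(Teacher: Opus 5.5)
Your injectivity estimate $\|\capu(\y_1,t,\epsilon)-\capu(\y_2,t,\epsilon)\|\ge(1-\epsilon L)\|\y_1-\y_2\|$ is the argument the paper imports from \cite[Lemma 2.8.3]{sanders2007averaging}. Your construction of the inverse, however, follows a different route. The paper fixes $\hat t$ and applies Clarke's nonsmooth implicit function theorem to $\GG(\x,\y,t,\epsilon)=-\x+\y+\epsilon\w(\y,t,\epsilon)$ at $(\hat\x,\hat\x,\hat t,0)$, where $\pi_{\y}\partial_{\rm C}\GG=\{\I\}$. It then glues finitely many local Lipschitz implicit functions over the compact set $\text{cl}(X)$, taking $\epsilon_0$ as the minimum of $1/L_{\w}$ and the neighborhood sizes.

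Your contraction argument does the job globally in one step. It gives the explicit constant $1/(1-\epsilon L)$ and a domain for $\capv$ that does not move with $t$. It also needs only uniform bounds on $\w$ in $\epsilon$, whereas the IFT route needs $\GG$ jointly Lipschitz, including in $\epsilon$. You are also right that $\epsilon_0$ independent of $t$ requires uniformity of $\w$ in $t$, for example $T$-periodicity, which the $\w$ of \eqref{eq.w1} has. The paper's gluing is done at a single $\hat t$ and leaves this implicit.

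On analyticity in $\epsilon$, your Picard/complexification argument fails for the reason you give. It also fails because $\w(\cdot,t,\epsilon)$ is not defined at complex $\y$. So your proposal does not establish that part of the statement. But no argument can, because the claim is false for merely Lipschitz $\w$. Take $n=1$, $X=(-1,1)$ and $\w(y,t,\epsilon)=|y|+1$, which is Lipschitz in $y$ and constant in $(t,\epsilon)$. For $0\le\epsilon<1/2$ one computes
\[
\capv(x,t,\epsilon)=\frac{x-\epsilon}{1+\epsilon}\ \text{ if } x\ge\epsilon,\qquad \capv(x,t,\epsilon)=\frac{x-\epsilon}{1-\epsilon}\ \text{ if } x<\epsilon .
\]
Fix $x=\delta\in(0,\epsilon_0)$. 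The map $\epsilon\mapsto\capv(\delta,t,\epsilon)$ has one-sided derivatives $-1/(1+\delta)$ and $-1/(1-\delta)$ at $\epsilon=\delta$, so it is not even differentiable there. The same holds for $\vv$. Since $\delta$ can be arbitrarily small, no choice of $\epsilon_0$ rescues the claim.

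The paper's proof does not actually obtain analyticity either. Clarke's theorem gives only Lipschitz dependence on $(\x,t,\epsilon)$, and the appeal to \cite[Lemma 2.8.3]{sanders2007averaging} for the form of the inverse relies on smoothness. Your fallback is the correct provable replacement: $\capv$ is Lipschitz in $(\x,t)$, and $\capv=\x-\epsilon\w(\x,t,0)+\mathcal{O}(\epsilon^2)$ uniformly on compacts. The expansion additionally needs $\w$ Lipschitz in $\epsilon$ uniformly in $\y$, which holds trivially for the $\epsilon$-independent $\w$ of \eqref{eq.w1}. Your bound $\|\capv-\x\|\le\epsilon B$ alone already gives \eqref{eq:x_y_bounds}.
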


\begin{proof}
The proof to show that $\capu$ is one-to-one on $X$ (and hence maps $X$ invertibly onto $\capu(X,t,\epsilon)$) for any $t$ and small enough $\epsilon$  follows identically to the proof  of \cite[Lemma 2.8.3]{sanders2007averaging}, since $\w$ is locally Lipschitz with respect to $\y$.
Next we show  Lipschitz continuity of the transformation and the form of the inverse. 
Let $\GG(\x,\y,t,\epsilon) = -\x + \y + \epsilon \w(\y,t,\epsilon)$.
Let $\hat{\epsilon}=0$ and let $\hat{\x}\in X$, $\hat{t}\in\real$. Let $\hat{\y}=\hat{\x}$. Then $\GG(\hat{\x},\hat{\y},\hat{t},\hat{\epsilon})
=\zero_n$. Then, since 
$\pi_{\y} \partial_{\rm C}\GG(\hat{\x},\hat{\y},\hat{t},\hat{\epsilon}) = \pi_{\y} \partial_{\rm C}\GG(\hat{\x},\hat{\y},\hat{t},0)=\{\I\}$ 
it follows that  $\pi_{\y} \partial_{\rm C} \GG(\hat{\x}, \hat{\y},t,0)$ is of maximal rank {(i.e., it contains no singular matrices)}.
Then, we call the nonsmooth implicit function theorem in \cite[Subsection 7.1]{clarke1990optimization} to conclude  that for small enough {$(\x,t,\epsilon)$} 
(i.e. in a neighborhood $N:=N_{\hat{\x}}\times N_{\hat{t}}\times N_{\hat{\epsilon}}$ {of $(\hat{\x},\hat{t},\hat{\epsilon})$}, where $N_{\hat{\x}},N_{\hat{t}}$ and $ N_{\hat{\epsilon}}$ are neighborhoods around $\hat{\x},\hat{t}$ and $\hat{\epsilon}$, respectively), there exists a  Lipschitz continuous function $\rr:N\to\real^n$ such that, for each ${(\x,t,\epsilon) \in N}$, ${(\x,\rr(\x,t ,\epsilon),t,\epsilon)}$ is the unique vector in a neighborhood of $(\hat{\x},\hat{\y},\hat{t},\hat{\epsilon})$ satisfying $\GG(\x,\rr(\x,t ,\epsilon),t,\epsilon)=\zero_n$. {Moreover, as argued in \cite[Lemma 2.8.3]{sanders2007averaging}, the local inverse function of $\y \mapsto \capu(\y,t,\epsilon)$ near $\hat{\y}$, which equals the implicit function $\y=\rr(\x,t,\epsilon)$ locally, takes the form of \eqref{eq:inverse_mapping_x_y}.}

{This process can be repeated for finitely many points $(\hat{\x}_i,\hat{t},\hat{\epsilon})$ to construct finitely many  local functions  $\rr_i:N_i$ (where $N_i$, and thus $N_{i,\hat{\epsilon}}$, depends on $\hat{\x}_i$) such that $\cup_i N_{i,\hat{\x}_i} \supset \text{cl}(X)$, since $\text{cl}(X)$ is compact. The global implicit function (which coincides with the global inverse function of $\y \mapsto \capu(\y,t,\epsilon)$) can then be constructed for some $t\in\real$ and $0 \leq \epsilon \leq \epsilon_0$, with $\epsilon_0>0$ the minimum of: (i) $1/L_{\w}$ (with $L_{\w}>0$ the Lipschitz constant associated with $\w$ on \text{cl}(X)), and  (ii) the sizes of the neighborhoods $N_{i,\hat{\epsilon}_i}$ (see \cite[Lemma 2.8.3]{sanders2007averaging} for details), to get:
$$\capv:(\x,t,\epsilon) \mapsto \rr_i(\x,t,\epsilon) \text{ if } \x \in N_{i,\hat{\x}_i}.$$
}
Hence, the inverse of  $\x=\capu(\y, t, \epsilon)$ is locally  Lipschitz continuous in $\x$, and {takes the form of  \eqref{eq:inverse_mapping_x_y}}.
\end{proof}

   \cref{thm:NonsmoothTransformationValidity} establishes a nonsmooth near-identity transformation that can be used for averaging the NLTP in \eqref{eq:average-canonical form_intro}. In particular, the auxiliary variable $\y=\x+\epsilon \vv(\x,t,\epsilon)$ transforms the NLTP into the averaging system.
    The goal then is to choose $\vv$ (or $\w$) so that {the differential equation associated with} $\y$ can be made time-invariant --- in such a case, it automatically represents the nonsmooth complete averaging of $\x$, i.e., the solution of the NLTP in \eqref{eq:average-canonical form_intro}. 


The following base assumptions on the RHS function $\f:X\times \real\times \real_+ \to X$ are needed in the results to come:
\begin{itemize}
    \item[\textbf{(A1)}] $\f(\x,t,\epsilon)$ is L-smooth in $\x$.
    \item[\textbf{(A2)}] $\f(\x,t,\epsilon)$ is locally Lipschitz continuous and $T$-periodic in $t$.
    \item[\textbf{(A3)}] $\f(\x,t,\epsilon)$ is analytic in $\epsilon$. 
\end{itemize}
Note that the solution $\x(t)=\x(t;\epsilon)$ of the NLTP system \eqref{eq:average-canonical form_intro} under assumptions (A1)-(A3) is guaranteed to be $C^1$ (and hence, Lipschitz continuous)
    in $t$ 
    and analytic in $\epsilon$. Moreover, from assumptions (A1)-(A3), we can expand the RHS of \eqref{eq:average-canonical form_intro} as a power series of $\epsilon$ (about $\epsilon =0$):
\begin{align}\label{eq:xExpansion2ndorder}
     \dot{\x} = \epsilon \f_1(\x,t)+\frac{\epsilon^2}{2!}\f_2(\x,t) +\frac{\epsilon^3}{3!}\hat{\f}(\x,t,\epsilon),
 \end{align}
where  
\begin{align*}
\f_1(\x,t)&:=\f(\x,t,0),\quad
\f_2(\x,t):= 2 \frac{\partial \f }{\partial \epsilon}(\x,t,0),
\end{align*}
and $\hat{\f}(\x,t,\epsilon)$
denotes the remainder term\footnote{Note that the coefficient $2$ appears in the expression $\f_2(\x,t)= 2 \frac{\partial \f }{\partial \epsilon}(\x,t,0)$ since there is an extra $\epsilon$ multiplied by $\f(\x,t,\epsilon)$ in \eqref{eq:average-canonical form_intro}, we multiplied $\frac{\partial \f }{\partial \epsilon}(\x,t,0)$ by 2 to match the coefficients in \eqref{eq:xExpansion2ndorder} with the factorials in traditional Taylor expansions.}. Note that $\f_1$ and $\f_2$ inherit periodicity from $\f$. 

\begin{proposition}\label{lemma:f_1_f_2_peridic}
Suppose  that assumption (A2) holds. Then $\f_1(\x,t)$ and $\f_2(\x,t)$ in \eqref{eq:xExpansion2ndorder} are $T-$periodic in $t$.
 \end{proposition}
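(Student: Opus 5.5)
The plan is to read the periodicity directly off the definitions of $\f_1$ and $\f_2$ in \eqref{eq:xExpansion2ndorder}, using that (A2) gives $\f(\x,t+T,\epsilon)=\f(\x,t,\epsilon)$ for all $\x\in X$, $t\in\real$ and all admissible $\epsilon$, not only at $\epsilon=0$.

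For $\f_1$, fix $\x$ and $t$ and evaluate the periodicity identity at $\epsilon=0$:
\begin{align*}
\f_1(\x,t+T)=\f(\x,t+T,0)=\f(\x,t,0)=\f_1(\x,t).
\end{align*}

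For $\f_2$, fix $\x$ and $t$ and set $\phi(\epsilon):=\f(\x,t+T,\epsilon)$ and $\psi(\epsilon):=\f(\x,t,\epsilon)$.
\begin{itemize}
\item By (A2), $\phi\equiv\psi$ as functions of $\epsilon$ on a neighborhood of $\epsilon=0$ (one-sided, if only $\epsilon\ge 0$ is allowed).
\item By (A3), both functions are analytic, hence differentiable, in $\epsilon$. Identical functions have identical derivatives, even one-sided, so $\phi'(0)=\psi'(0)$.
\item Concretely, the difference quotients $\tfrac{\f(\x,t+T,\epsilon)-\f(\x,t+T,0)}{\epsilon}$ and $\tfrac{\f(\x,t,\epsilon)-\f(\x,t,0)}{\epsilon}$ agree term by term for every $\epsilon\neq 0$, so their limits agree.
\end{itemize}
Therefore
\begin{align*}
\f_2(\x,t+T)=2\frac{\partial\f}{\partial\epsilon}(\x,t+T,0)=2\frac{\partial\f}{\partial\epsilon}(\x,t,0)=\f_2(\x,t).
\end{align*}

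The only point needing care is that (A2) must be read as periodicity holding uniformly for each fixed $\epsilon$ in a neighborhood of $0$, not just at $\epsilon=0$. This is what allows the $\epsilon$-derivative to be taken along the periodicity identity. I expect no real obstacle beyond stating this reading explicitly.

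The same argument applied to the $\epsilon$-Taylor coefficients shows that the remainder $\hat{\f}$ is also $T$-periodic. I will note this in passing, since it will be used in the later averaging estimates.
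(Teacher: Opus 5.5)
Your proposal is correct and is essentially the paper's argument. Periodicity of $\f_1$ comes from evaluating the identity in (A2) at $\epsilon=0$, and periodicity of $\f_2$ comes from differentiating $\f(\x,t+T,\epsilon)=\f(\x,t,\epsilon)$ in $\epsilon$ at $\epsilon=0$; your extra remarks (that (A2) must hold for every $\epsilon$ near $0$, and the difference-quotient justification) just make explicit what the paper's short proof leaves implicit.
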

 
 \begin{proof}
     Since $\f_1(\x,t)=\f(\x,t,0)$, it is clear that:
     \begin{align*}
\f_1(\x,t)=\f(\x,t,0) = \f(\x,t+T,0)=\f_1(\x,t+T).
     \end{align*}
     Also, since $\f(\x,t,\epsilon) = \f(\x,t+T,\epsilon)$, then $\frac{\partial \f}{\partial \epsilon}(\x,t,\epsilon) =\frac{\partial \f}{\partial \epsilon}(\x,t+T,\epsilon)$. It follows that:
          \begin{align*}
         \f_2(\x,t) = 2 \frac{\partial \f }{\partial \epsilon}(\x,t,0)= 2 \frac{\partial \f }{\partial \epsilon}(\x,t+T,0)=\f_2(\x,t+T).
     \end{align*}
 \end{proof}

In working towards the averaging of \eqref{eq:average-canonical form_intro}, we first construct an auxiliary time-dependent system via the near-identity transformation.

\begin{lemma}\label{lemma:g_1_g_2_hatg_Loc_Lip} 
Suppose that assumptions (A1)-(A3) hold. 
Let $\x(t)=\x(t;\epsilon)$ be the solution of \eqref{eq:average-canonical form_intro} for $\epsilon \in [0,\epsilon_0]$, for some $\epsilon_0>0$. 
 Let $\y(t)=\y(t;\epsilon)$ be defined according to 
 \begin{equation}\label{eq.lemmag1g2}
 \y(t):=\capv(\x(t), t, \epsilon)= \x(t)+\epsilon \vv(\x(t), t, \epsilon),
 \end{equation}
for some  function $\vv$ that is locally Lipschitz continuous in $\x$, $t$, and analytic in $\epsilon$. Then it follows that 
$\y(t)$ is a Filippov
solution of 
\begin{equation}\label{eq:yExpansion2ndorder}
     \dot{\y} = \epsilon \h_1(\y,t)+\epsilon^2\hat{\h}(\y,t,\epsilon),
 \end{equation}
for some functions $\h_1(\y,t)$ and $\hat{\h}(\y,t,\epsilon)$ that are measurable and essentially bounded in $\y, t$ and analytic in $\epsilon$.
\end{lemma}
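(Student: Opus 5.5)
The approach is to differentiate the composite $\y(t)=\capv(\x(t),t,\epsilon)$ along the trajectory $\x(t)$ of \eqref{eq:average-canonical form_intro}, re-express the result in terms of $\y$ through the inverse map of \cref{thm:NonsmoothTransformationValidity}, and then collect powers of $\epsilon$.

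\textbf{Step 1: differentiate the composite.} Under (A1)--(A3), $\x(t)$ is $C^1$ in $t$, and $\vv$ is locally Lipschitz in $(\x,t)$. Hence $t\mapsto \vv(\x(t),t,\epsilon)$ is locally Lipschitz, therefore absolutely continuous, and by Rademacher's theorem differentiable for almost every $t$. At points where $\vv$ is differentiable I will use the chain rule. More robustly, I will use the L-smooth chain rule \eqref{eq:sharp_rules} with directions $\begin{bsmallmatrix}\dot{\x}(t)\\ 1\end{bsmallmatrix}$, which is valid wherever the one-sided derivative exists. This gives, for a.e. $t$,
\[
\dot{\y}(t)=\dot{\x}(t)+\epsilon\Big(\widetilde{\J}_{\x}\vv(\x(t),t,\epsilon)\,\dot{\x}(t)+\widetilde{\J}_{t}\vv(\x(t),t,\epsilon)\Big),
\]
where $\widetilde{\J}_{\x}\vv$ and $\widetilde{\J}_{t}\vv$ are measurable selections of the partial derivatives, set to zero off the differentiability set as in \eqref{eq:Jacobian_Like_dropping_zero}. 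Local Lipschitz continuity makes these selections essentially bounded on compacts.

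\textbf{Step 2: substitute the dynamics and collect powers of $\epsilon$.} I substitute $\dot{\x}=\epsilon\f(\x,t,\epsilon)$ and then $\x=\capu(\y,t,\epsilon)=\y+\epsilon\w(\y,t,\epsilon)$. This yields $\dot{\y}=\epsilon\,\bm{\Phi}(\y,t,\epsilon)$ with
\[
\bm{\Phi}=\f(\capu(\y,t,\epsilon),t,\epsilon)+\epsilon\,\widetilde{\J}_{\x}\vv\,\f+\widetilde{\J}_t\vv .
\]
Because $\f$, $\vv$ and $\w$ are analytic in $\epsilon$, I expand about $\epsilon=0$. The order-$\epsilon$ term is
\[
\h_1(\y,t)=\f_1(\y,t)+\widetilde{\J}_t\vv(\y,t,0),
\]
and all remaining terms are grouped as $\epsilon^2\hat{\h}(\y,t,\epsilon)$. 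For $\epsilon\mapsto \f(\y+\epsilon\w,t,\epsilon)$, analyticity in the second slot is only assumed in the third argument, so the first-slot dependence will be handled with the Lipschitz bound: $\f(\y+\epsilon\w,t,\epsilon)-\f(\y,t,\epsilon)=\mathcal{O}(\epsilon)$ uniformly on compacts. That difference, divided by $\epsilon$, is placed in $\hat{\h}$, and it is measurable and essentially bounded. If one insists on analyticity of $\hat{\h}$ in $\epsilon$, this requires viewing the expansion as a formal power series in $\epsilon$ at fixed $(\y,t)$, as in the smooth case of \cite{sanders2007averaging}.

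\textbf{Step 3: measurability, boundedness, and the Filippov property.} $\h_1$ and $\hat{\h}$ are compositions and products of continuous functions with measurable, essentially bounded derivative selections, so they are measurable and locally essentially bounded. The right-hand side $\epsilon\h_1+\epsilon^2\hat{\h}$ therefore meets Filippov's existence conditions from \cref{subsec:filippov}. The map $\y(t)$ is absolutely continuous and satisfies the equation a.e. with its actual value. Since the set of $t$ where $\y(t)$ lies in the zero-measure nondifferentiability set contributes only a null set in time, or is absorbed because the true value lies in the Filippov convexification $\bm{\mathcal{F}}$, it follows that $\dot{\y}\in\bm{\mathcal{F}}[\epsilon\h_1+\epsilon^2\hat{\h}](\y,t)$ a.e. In other words, $\y$ is a Filippov solution.

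\textbf{Main obstacle.} The hardest point is this last inclusion. The trajectory may spend positive time on the measure-zero set where $\vv$ is nondifferentiable, for example when sliding along a switching surface. There the zero-selection in $\widetilde{\J}$ does not equal the actual derivative. I would handle this by noting that, for Lipschitz $\vv$, the actual time derivative of $\vv(\x(t),t)$ lies in $\partial_{\rm C}\vv\cdot\begin{bsmallmatrix}\dot{\x}\\1\end{bsmallmatrix}$. I would then show that this set is contained in the Filippov hull of the a.e.-defined right-hand side, because Clarke's generalized Jacobian is the convex hull of limits of nearby Jacobians, which is exactly what $\bm{\mathcal{F}}$ captures.
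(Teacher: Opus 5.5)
Your route is the paper's: differentiate $\y=\capv(\x,t,\epsilon)$ almost everywhere, substitute $\dot{\x}=\epsilon\f$, pull back through $\x=\capu(\y,t,\epsilon)$ (the paper's $\pmb{\gamma}$), and expand in $\epsilon$. The paper simply asserts three things: the a.e.\ chain rule along the trajectory, the analyticity of $\h^*\circ\pmb{\gamma}$ in $\epsilon$, and the Filippov property. You are right that none of these is automatic, but your patches do not close the gaps.

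\textbf{Step 2.} The Lipschitz argument only handles the $\f$-term. The term $\widetilde{\J}_t\vv(\capu(\y,t,\epsilon),t,\epsilon)$ is a derivative of a merely Lipschitz function evaluated at an $\mathcal{O}(\epsilon)$-shifted point. It need not be $\mathcal{O}(\epsilon)$-close to $\widetilde{\J}_t\vv(\y,t,0)$. For example, take $n=1$ and $\vv(x,t,\epsilon)=|x-t|+1$. Since $\capv(\cdot,t,\epsilon)$ is increasing with $\capv(t,t,\epsilon)=t+\epsilon$, one has $\capu(y,t,\epsilon)<t$ if and only if $y<t+\epsilon$. So for $y\in(t,t+\epsilon)$ the two $t$-derivatives are $+1$ and $-1$. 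Your $\hat{\h}$ is then $2/\epsilon$ plus bounded terms on that band, and at fixed $(y,t)$ it jumps in $\epsilon$ at $\epsilon=y-t$. Hence neither an $\epsilon$-uniform bound nor analyticity in $\epsilon$ follows. No other $\epsilon$-independent $\h_1$ helps, because letting $\epsilon\downarrow 0$ at fixed $(y,t)$ pins it down. A formal power series at fixed $(\y,t)$ does not help either, since its radius of validity shrinks to $0$ near the nonsmooth locus. This part of the statement needs structure beyond local Lipschitz continuity of $\vv$, and neither your argument nor the paper's provides it.

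\textbf{Step 3.} You claim that $\partial_{\rm C}\vv$ applied to $(\dot{\x},1)$ is contained in the Filippov set; this is false in general. $\bm{\mathcal{F}}$ at $(\y,t)$ freezes $t$ and discards null sets of states, whereas Clarke's Jacobian also collects limits in $t$. Take $\vv(\x,t)=G(t)\mathbf{e}$ with $G(t)=\int_0^t(\mathbf{1}_E-\mathbf{1}_{\real\setminus E})\,ds$, where $E$ and its complement meet every interval in positive measure. Then $\partial_{\rm C}\vv(\x,t)$ applied to $(\dot{\x},1)$ is $[-1,1]\,\mathbf{e}$ everywhere. But for a.e.\ $t$ the Filippov set of $\widetilde{\J}_t\vv(\cdot,t)$ is the singleton $\{G'(t)\mathbf{e}\}$. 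The LD chain rule in Step 1 is also unavailable, since $\vv$ is only assumed locally Lipschitz.

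What you need concerns the actual derivative, not the whole Clarke set, and it can be proved directly:
\begin{enumerate}
\item Set $\tilde{\vv}(\xi,t):=\vv(\x(t)+\xi,t,\epsilon)$. The map $(\xi,t)\mapsto(\x(t)+\xi,t)$ preserves null sets, so by Fubini, for a.e.\ $\xi$ the Lipschitz map $t\mapsto\tilde{\vv}(\xi,t)$ has derivative $\widetilde{\J}_{\x}\vv\,\dot{\x}(t)+\widetilde{\J}_t\vv$, evaluated at $(\x(t)+\xi,t,\epsilon)$, for a.e.\ $t$.
\item Integrate over $[t_0,t_1]$ and bound $\mathbf{p}\cdot(\cdot)$ by the essential supremum over $\xi\in B_\delta(\zero)$ at each time.
\item Let $\xi\to\zero$ and apply Lebesgue differentiation, for countably many $\mathbf{p}$ and $\delta$.
\end{enumerate}
This puts $\frac{d}{dt}\vv(\x(t),t,\epsilon)$, for a.e.\ $t$, in the intersection over $\delta$ of the closed convex hulls of essential values. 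After transport through the bi-Lipschitz map $\capu(\cdot,t,\epsilon)$ and using continuity of $\f$, that intersection is exactly the Filippov set required.
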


\begin{proof}
As the solution $\x$ is $C^1$ in $t$ and $\capv$ is locally Lipschitz with respect to $\x$ and $t$, it follows that $\y$ is locally Lipschitz (and thus absolutely continuous). Hence,  it follows from \eqref{eq:average-canonical form_intro}  that, {for almost all $t$},
    \begin{align*}
        \dot{\y}
        = \dot{\x} + \epsilon \dot{\vv}(\x,t,\epsilon)
        &=\dot{\x} + \epsilon\left(\frac{\partial \vv}{\partial\x}(\x,t,\epsilon)\dot{\x}+\frac{\partial\vv}{\partial t}(\x,t,\epsilon)\right)\\
        &=\epsilon\left(\f(\x,t,\epsilon)+\epsilon\frac{\partial \vv}{\partial\x}(\x,t,\epsilon)\f(\x(,t,\epsilon)+\frac{\partial\vv}{\partial t}(\x,t,\epsilon)\right).
    \end{align*}
 Note that $\f$ and ${\vv}$ are analytic with respect to $\epsilon$, and $\f$ and $\vv$ are  locally Lipschitz in $\x$ and $t$  (and hence $\frac{\partial \vv}{\partial \y}$ and $\frac{\partial \vv}{\partial t}$ exist a.e. and are measurable and essentially bounded). Thus,
    \begin{equation}\label{eq.gyte}
    \h^*(\x,t,\epsilon):=\f(\x,t,\epsilon) + \epsilon\frac{\partial \vv}{\partial\x}(\x,t,\epsilon)\f(\x,t,\epsilon)+\frac{\partial\vv}{\partial t}(\x,t,\epsilon) 
    \end{equation}
    exists a.e., is measurable and essentially bounded in $\x$ and $t$, and is analytic with respect to $\epsilon$. Let 
    $$\pmb{\gamma}(\y,t,\epsilon):=(\y+\epsilon \w(\y,t,\epsilon),t,\epsilon)=(\x,t,\epsilon)$$ 
    and define
    $\h(\y,t,\epsilon):=\h^*(\pmb{\gamma}(\y,t,\epsilon))$.
    We see that $\h$ exists a.e., is measurable and essentially bounded in $\y$ and $t$, and is analytic with respect to $\epsilon$.    
    Expanding $\h(\y,t,\epsilon)$ in terms of $\epsilon$ about $\epsilon=0$, we get
    \begin{align*}
        \dot{\y}=\epsilon\h^*(\pmb{\gamma}(\y,t,\epsilon)) =\epsilon\h(\y,t,\epsilon)&= \epsilon \h_1(\y,t)+\epsilon^2\hat{\h}(\y,t,\epsilon),
    \end{align*}
    where $\h_1$ and $\hat{\h}$ are the Taylor expansion terms of $\h(\y,t,\epsilon)$ when expanding about $\epsilon=0$, with smoothness properties of $\h_1$ and $\hat{\h}$ inherited from $\h$, and the result follows.
\end{proof}

As expected, the original variable $\x$, and the transformed variable $\y$, remain ``close''.

\begin{corollary}\label{lemma:g_1_g_2_hatg_Loc_Lip.corollary} 
Assume the setting of \cref{lemma:g_1_g_2_hatg_Loc_Lip}. Then it holds that
\begin{align}\label{eq:x_y_bounds}
    \|\x(t;\epsilon)-\y(t;\epsilon)\|=\mathcal{O}(\epsilon){.}
\end{align}
\end{corollary}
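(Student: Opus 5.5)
The bound follows directly from the form of the near-identity transformation in \eqref{eq.lemmag1g2}. By definition,
\begin{equation*}
\x(t;\epsilon)-\y(t;\epsilon)=-\epsilon\,\vv(\x(t;\epsilon),t,\epsilon).
\end{equation*}
So it suffices to show that $\vv(\x(t;\epsilon),t,\epsilon)$ is bounded uniformly in $t$ on the interval of interest and in $\epsilon\in[0,\epsilon_0]$. The plan has two parts: confine the arguments of $\vv$ to a compact set, and then use continuity to bound $\vv$ there.

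\textbf{Confining the arguments.} As in \cref{thm:NonsmoothTransformationValidity}, we work on a bounded connected open set $X$ and assume the trajectory $\x(t;\epsilon)$ stays in a compact subset $D\subset X$ on the time interval considered. This is the standard setting of averaging statements, typically over time scales $t=\mathcal{O}(1/\epsilon)$. For the time variable, we choose the transformation $\vv$ to be $T$-periodic in $t$, as is done in the averaging constructions that follow (cf. \cref{lemma:f_1_f_2_peridic}). Then $t$ may be restricted to $[0,T]$. If periodicity is not to be assumed, we instead restrict to the finite time horizon of interest.

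\textbf{Bounding $\vv$.} The function $\vv$ is locally Lipschitz in $(\x,t)$, hence continuous there. It is also analytic in $\epsilon$, hence continuous in $\epsilon$ on $[0,\epsilon_0]$. To get joint continuity in $(\x,t,\epsilon)$, we write $\vv$ through its convergent $\epsilon$-expansion with coefficients continuous in $(\x,t)$. Joint continuity then holds on the compact set $D\times[0,T]\times[0,\epsilon_0]$, so
\begin{equation*}
C:=\max_{D\times[0,T]\times[0,\epsilon_0]}\|\vv\|<\infty ,
\end{equation*}
and therefore $\|\x(t;\epsilon)-\y(t;\epsilon)\|\le C\epsilon$.

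\textbf{Main obstacle.} The delicate step is the uniformity: ensuring $\x(t;\epsilon)$ stays in a fixed compact set independent of $\epsilon$, and justifying joint continuity of $\vv$ in $(\x,t,\epsilon)$. For the first point, the plan is to note that $\dot{\x}=\mathcal{O}(\epsilon)$ with $\f$ locally bounded. Hence, on time scales $\mathcal{O}(1/\epsilon)$, starting in a compact set strictly inside $X$, the trajectory moves only $\mathcal{O}(1)$, which can be absorbed by shrinking the initial set and choosing $\epsilon_0$ small. For the second point, we use the local Lipschitz constant of $\vv$ in $(\x,t)$, taken uniformly for $\epsilon$ near $0$, which is available from the construction in \cref{thm:NonsmoothTransformationValidity}.
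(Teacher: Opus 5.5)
Your proposal is correct and uses the same argument as the paper, whose entire proof reads the bound off $\x-\y=-\epsilon\,\vv(\x(t),t,\epsilon)$ in \eqref{eq.lemmag1g2} and takes the uniform boundedness of $\vv$ for granted. Your compactness discussion just makes that step explicit, with two small caveats: only the $T$-periodicity option gives an $\epsilon$-uniform bound on $t=\mathcal{O}(1/\epsilon)$ horizons, since the ``finite horizon'' fallback is $\epsilon$-dependent there; and you could avoid questions about the regularity of the inverse altogether by writing $\x-\y=\epsilon\,\w(\y,t,\epsilon)$ via \eqref{eq:Nonsmooth_x_y_transformation} and bounding the explicitly constructed $T$-periodic $\w$ of \eqref{eq.w1}.
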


\begin{proof}
This follows immediately from \eqref{eq.lemmag1g2}.
\end{proof}

The function $\h_1$ in \eqref{eq:yExpansion2ndorder} is time-dependent. However, as will be demonstrated below, it is possible to choose $\vv$ (or $\w$ rather), and hence $\capv$ (or $\capu$ rather), in a way so that the right-hand side functions of the auxiliary equation become time-invariant, by ``shifting'' the time dependency in $\h_1$ to $\hat{\h}$. This is accomplished by the typical approach of averaging $\f$ over a period to capture its oscillatory behavior.  In particular,  we introduce the following functions related to  averaging: \looseness=-1
\begin{align}
\g_1(\y)&:=\Bar{\f}_1(\y)=\frac{1}{T}\int_{0}^{T}\f_1(\y,t)dt,\label{eq.g1}\\
\w(\y,t)&:= \int_0^t(\f_1(\y,s)-\Bar{\f}_1(\y))ds.\label{eq.w1}
\end{align}
Analogous to smooth averaging theory, $\g_1$ is a time-invariant functions that capture the oscillatory behavior of the NLTP system. The function $\w$, whose T-periodicity follows from \cref{lemma:f_1_f_2_peridic}, ``captures'' the difference. 

  \begin{proposition}\label{prop.g1w1}
Suppose that assumptions (A1)-(A3) hold. Then the functions $\g_1(\y)$ and $\w(\y,t)$ are L-smooth. Additionally, $\w(\y,t)$ is T-periodic in $t$, and $\w(\y,T)=\zero$ for all $\y$. 
 \end{proposition}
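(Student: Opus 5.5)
The proof will treat the two claims separately: first L-smoothness of $\g_1$ and $\w$, then the periodicity properties of $\w$.

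\textbf{Periodicity.} I would begin with these properties, since they only need \cref{lemma:f_1_f_2_peridic}.

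First, $\w(\y,T)=\int_0^T\f_1(\y,s)ds - T\Bar{\f}_1(\y)$. By the definition \eqref{eq.g1}, this is $\zero$.

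Next, for any $t$, split
$$\w(\y,t+T)=\w(\y,t)+\int_t^{t+T}(\f_1(\y,s)-\Bar{\f}_1(\y))ds.$$
By $T$-periodicity of $\f_1$ in $s$, the integral of $\f_1$ over any interval of length $T$ equals its integral over $[0,T]$. So the last integral equals $\w(\y,T)=\zero$, which gives $\w(\y,t+T)=\w(\y,t)$.

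\textbf{L-smoothness.} Here I would argue that parametric integrals of L-smooth integrands inherit L-smoothness. Two facts are needed.

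1. \emph{Local Lipschitz continuity.} By (A1), $\f_1(\cdot,t)=\f(\cdot,t,0)$ is locally Lipschitz in $\y$. I would assume the Lipschitz constant is uniform for $t$ in the compact interval $[0,T]$, which is the standing interpretation in the paper. Then $\y\mapsto\int_0^t\f_1(\y,s)ds$ is Lipschitz near any $\y$, with constant $tL$. The same bound holds for $\g_1$ and $\w$. Lipschitz continuity in $t$ of $\w$ is immediate, because the integrand is bounded.

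2. \emph{Higher-order directional derivatives.} I would show by induction on the homogenization depth that
$$\g_1^{(j)}(\uu)=\frac{1}{T}\int_0^T \f_1(\cdot,t)^{(j)}(\uu)\,dt,$$
where the superscript denotes the homogenization sequence of \cref{def:L-smoothFunction} for $\y\mapsto\f_1(\y,t)$ along the fixed directions $\m_1,\dots,m_k$. Each step is a limit $\alpha\downarrow0$ of difference quotients. These quotients are uniformly bounded by the Lipschitz constant, since every homogenized map $\f^{(j)}$ is Lipschitz with the same constant as $\f$ (a standard fact from \cite{nesterov2005lexicographic}). Pointwise convergence in $t$ then allows the dominated convergence theorem to move the limit inside the integral. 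This is the same style of argument as in \cref{lemma:PWSC_implies_PWC}. Measurability in $t$ of each $\f^{(j)}$ follows because it is a pointwise limit of measurable difference quotients.

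The same argument gives L-smoothness of $\w(\cdot,t)$: it is a difference of integrals over $[0,t]$, and sums of L-smooth functions are L-smooth by the sum rule \eqref{eq:sharp_rules}. For L-smoothness of $\w$ jointly in $(\y,t)$, I would note that $\partial_t\w=\f_1(\y,t)-\Bar{\f}_1(\y)$, using the fundamental theorem of calculus with continuity in $t$ from (A2). Directional derivatives in mixed directions then decompose into the $\y$-part handled above plus a term linear in the $t$-direction. At deeper homogenization levels this decomposition would be iterated.

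\textbf{Expected obstacle.} The main difficulty is the uniform-in-$t$ Lipschitz bound together with existence of the pointwise limits at every level of homogenization. Dominated convergence requires the difference quotients of $\f^{(j-1)}$ to converge for each $t$, and that is exactly L-smoothness of $\f_1(\cdot,t)$. What must be checked carefully is that the resulting limit is still a homogenization of the integrated function, which is the interchange of limit and integral. For the joint $(\y,t)$ case I would, if needed, restrict to the reading used in the paper: L-smoothness in $\y$ together with Lipschitz continuity in $t$.
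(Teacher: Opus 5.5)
Your proposal is correct and follows the paper's proof: the verification of $\w(\y,T)=\zero$ and of $T$-periodicity via $\int_t^{t+T}=\int_0^T$ is identical to the paper's, and L-smoothness of $\w$ rests on the same decomposition $\w(\y,t)=\int_0^t\f_1(\y,s)\,ds-t\,\g_1(\y)$. The difference is one of rigor: the paper only asserts that L-smoothness of $\g_1$ ``follows immediately'' from that of $\f_1$, whereas your dominated-convergence induction through the homogenization sequence actually justifies moving the limit inside the integral at every level, and with $\M=\I_n$ it also gives the identity $\JL_{\y}\w(\y,t)=\int_0^t\JL_{\y}\f_1(\y,s)\,ds$ (when $\g_1=\zero$) that the paper later cites from Nesterov; the uniform-in-$t$ Lipschitz bound your induction relies on is supplied on compact sets by (A2), read as joint local Lipschitz continuity.
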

 
\begin{proof}
First, we note that $\f_1$ is L-smooth, by assumption (A1), and we recall that $\f_1$ is T-periodic in $t$ by \cref{lemma:f_1_f_2_peridic}.
Since $\f_1$ is L-smooth, it follows immediately that $\g_1$ is L-smooth. From this,   
$$\w(\y,t)=\int_0^t \f_1(\y,s)ds-\int_0^t\bar{\f}_1(\y)ds=\int_0^t \f_1(\y,s)ds- t \g_1(\y)$$
is also L-smooth, as the difference of L-smooth functions.

Lastly, observe that
$$\w(\y,T)=\int_0^T \f_1(\y,s)ds-\int_0^T \Bar{\f}_1(\y)ds=T\Bar{\f}_1(\y)-T\Bar{\f}_1(\y)=\zero$$
and so, by periodicity of $\f_1$,
\begin{align*} 
\w(\y,t+T)
&= 
\int_0^{t}(\f_1(\y,s)-\Bar{\f}_1(\y))ds+\int_t^{t+T}(\f_1(\y,s)-\Bar{\f}_1(\y))ds\\
&= 
\int_0^{t}(\f_1(\y,s)-\Bar{\f}_1(\y))ds+\int_0^{T}(\f_1(\y,s)-\Bar{\f}_1(\y))ds
=\w(\y,t)+\w(\y,T)
=\w(\y,t).
\end{align*}
\end{proof}
With the above in place, we show that the auxiliary system in \eqref{eq:yExpansion2ndorder}, which we can consider to be the  NLTP under the near-identity transformation, can essentially be rewritten with $\h_1$ as a time-independent function (not necessarily uniquely so).

\begin{theorem}\label{thm:y_bar_y_bounds}
Suppose that assumptions (A1)-(A3) hold. 
 Let $\x(t)=\x(t;\epsilon)$ be the solution of \eqref{eq:average-canonical form_intro} for $\epsilon \in [0,\epsilon_0]$, for some $\epsilon_0>0$, such that $\x(t) \in D$ for some compact set $D \subset X \subseteq \real^n$, and $t\in T:=[0,\frac{\kappa}{\epsilon}]$, for some  $\kappa>0$.
Let $\y(t)=\y(t;\epsilon)$ be defined according to $\y(t):=\capv(\x(t), t, \epsilon)= \x(t)+\epsilon \vv(\x(t), t, \epsilon)$, and
let $\Bar{\y}(t)=\Bar{\y}(t;\epsilon)$ be a solution of 
\begin{equation} \label{eq:averaged_sys}  
    \dot{\Bar{\y}} = \epsilon \g_1(\Bar{\y}) + \ \epsilon^2 \bgHat(\Bar{\y}, t, \epsilon), 
\end{equation}
where   $\bgHat$ is measurable and essentially bounded. Then 
\begin{align}
    \| \y(t;\epsilon) - \Bar{\y}(t;\epsilon) \| &= \mathcal{O}(\epsilon) \label{eq:y_bar_y_est}
\end{align}
for $t \in T$, i.e., for $t = \mathcal{O}(1/\epsilon)$.
\end{theorem}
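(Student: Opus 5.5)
The plan is to choose the near-identity transformation so that the $\mathcal{O}(\epsilon)$ part of the transformed vector field becomes exactly $\g_1$. Then I compare $\y$ and $\Bar{\y}$ by a Gronwall argument on the time scale $t=\mathcal{O}(1/\epsilon)$.

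\textbf{Step 1: choice of transformation.} I take $\w$ in \eqref{eq:Nonsmooth_x_y_transformation} to be the function in \eqref{eq.w1}, possibly augmented by higher powers of $\epsilon$. By \cref{prop.g1w1}, $\w$ is L-smooth, hence locally Lipschitz in $\y$, and $T$-periodic in $t$. It is therefore bounded together with its Lipschitz constant on $D\times\real$. \cref{thm:NonsmoothTransformationValidity} then provides the inverse $\capv=\x+\epsilon\vv$. Comparing $\x=\y+\epsilon\w$ with $\y=\x+\epsilon\vv$ gives $\vv(\x,t,\epsilon)=-\w(\y,t)+\mathcal{O}(\epsilon)$ a.e.

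\textbf{Step 2: the transformed equation.} I substitute into \eqref{eq.gyte} from the proof of \cref{lemma:g_1_g_2_hatg_Loc_Lip}. The $\mathcal{O}(1)$ part of $\h^*$ is $\f_1(\x,t)+\frac{\partial\vv}{\partial t}$. At leading order this equals $\f_1(\y,t)-\frac{\partial\w}{\partial t}(\y,t)=\f_1(\y,t)-(\f_1(\y,t)-\Bar{\f}_1(\y))=\g_1(\y)$ for a.e.\ $t$. The fundamental theorem of calculus is valid here because $\w(\y,\cdot)$ is an integral of a locally Lipschitz function (cf.\ \cref{lemma:PWSC_implies_PWC.new}). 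Replacing $\f_1(\x,t)$ by $\f_1(\y,t)$ costs $\mathcal{O}(\epsilon)$ by Lipschitz continuity and \cref{lemma:g_1_g_2_hatg_Loc_Lip.corollary}. All remaining contributions are $\epsilon$ times measurable, essentially bounded functions on $D$. So $\h_1=\g_1$ in \eqref{eq:yExpansion2ndorder}, and $\y$ is a Filippov solution of $\dot{\y}=\epsilon\g_1(\y)+\epsilon^2\hat{\h}(\y,t,\epsilon)$ with $\|\hat{\h}\|\le C$ a.e.

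\textbf{Step 3: Gronwall comparison.} Let $\mathbf{e}:=\y-\Bar{\y}$. Both functions are absolutely continuous, so for $t\in[0,\kappa/\epsilon]$
\[
\|\mathbf{e}(t)\|\le\|\mathbf{e}(0)\|+\epsilon L_{\g_1}\int_0^t\|\mathbf{e}(s)\|ds+\epsilon^2(C+C')t .
\]
Here $L_{\g_1}$ is the Lipschitz constant of the L-smooth function $\g_1$ on a compact neighborhood of $D$, and $C'$ bounds $\bgHat$. The initial error is $\|\mathbf{e}(0)\|=\|\x(0)+\epsilon\vv-\Bar{\y}(0)\|=\mathcal{O}(\epsilon)$ if $\Bar{\y}(0)=\x(0)$, or $\Bar{\y}(0)=\y(0)$. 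Gronwall's inequality then yields $\|\mathbf{e}(t)\|\le(\mathcal{O}(\epsilon)+\epsilon\kappa(C+C'))e^{L_{\g_1}\kappa}=\mathcal{O}(\epsilon)$.

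Two points need care in this step. First, I must ensure that $\Bar{\y}$ stays in a compact neighborhood of $D$ so that the constants are uniform. I will handle this with a standard continuation argument: I shrink $\epsilon_0$ so that the $\mathcal{O}(\epsilon)$ bound keeps $\Bar{\y}$ inside $B_\delta(D)$. Second, the essential bounds hold only a.e., which is harmless under the integral.

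\textbf{Main obstacle.} The hardest part is Step 2: rigorously justifying, in the nonsmooth setting, that the leading-order term of $\h$ equals $\g_1$ almost everywhere. The difficulty is that $\h^*$ is evaluated at $\x$ while $\w$ and its derivatives are evaluated at $\y$, and the chain rule for $\vv=-\w\circ\capv+\dots$ holds only a.e. along trajectories. I would first try to avoid differentiating $\vv$ at all. Instead I would differentiate $\x(t)=\y(t)+\epsilon\w(\y(t),t)$ directly, using the L-derivative chain rule \eqref{eq:sharp_rules} to write $\frac{d}{dt}\w(\y(t),t)=\J^{\rm L}_{\y}\w\,\dot{\y}+\frac{\partial\w}{\partial t}$ a.e. Solving this for $\dot{\y}$ using the invertibility of $\I+\epsilon\J^{\rm L}_{\y}\w$, which holds for $\epsilon<1/L_{\w}$, gives $\dot{\y}=\epsilon\g_1(\y)+\mathcal{O}(\epsilon^2)$ directly.
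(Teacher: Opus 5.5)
Your proposal is correct and, once you adopt the route you describe under \emph{Main obstacle} (differentiate $\x=\y+\epsilon\w(\y,t)$ along the trajectory, invert $\I+\epsilon\,\partial_{\y}\w$ for $\epsilon<1/L_{\w}$, use the homological equation $\partial_t\w=\f_1-\g_1$, then apply Gronwall on $[0,\kappa/\epsilon]$), it is essentially the paper's proof. Your other changes are sensible refinements of steps the paper leaves implicit: you use plain Lipschitz continuity of $\f_1$ instead of the L-Taylor expansion of \cref{prop:partial_L_Taylor}, and you account for $\|\y(0)-\Bar{\y}(0)\|$ and for keeping $\Bar{\y}$ in a compact neighborhood. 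One caution: the LD chain rule in \eqref{eq:sharp_rules} does not literally apply to the merely absolutely continuous map $t\mapsto(\y(t),t)$, but you do not need it, since Lipschitz continuity of $\w$ in $\y$ (uniformly in $t$) already gives $\bigl\|\frac{d}{dt}\w(\y(t),t)-\frac{\partial\w}{\partial t}(\y(t),t)\bigr\|\le L_{\w}\|\dot{\y}(t)\|$ a.e., which is all the inversion step uses.
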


\begin{proof}
Differentiating the transformation \eqref{eq:Nonsmooth_x_y_transformation} with respect to time yields the following a.e.:
\begin{equation*}
    \dot{\x} = \dot{\y} + \epsilon \frac{\partial\w}{\partial \y}\dot{\y} + \epsilon \frac{\partial{\w}}{\partial t} = \left( \I_{n} + \epsilon  \frac{\partial\w}{\partial \y}\right) \dot{\y} + \epsilon \frac{\partial{\w}}{\partial t}.
\end{equation*}
First, we argue that the linear operator $\epsilon \frac{\partial{\w}}{\partial \y}$ is a bounded linear operator with spectral radius less than one, i.e., $\rho(\epsilon \frac{\partial{\w}}{\partial \y})<1$. This is because of the fact that $\w$ is locally Lipschitz continuous with respect to $\y$ by some Lipschitz constant $L$, and therefore, $\|\frac{\partial{\w}}{\partial \y}\|\le L$ a.e. on $D\times T$. Hence, we see that $\rho(\epsilon \frac{\partial{\w}}{\partial \y})\le\|\epsilon \frac{\partial{\w}}{\partial \y}\|\le |\epsilon|\| \frac{\partial{\w}}{\partial \y}\|<1$, provided that $\epsilon<\frac{1}{L}$.  Hence, the Neumann series $\Sigma_{k=0}^{\infty}(\epsilon \frac{\partial{\w}}{\partial \y})^k$ converges, and we can use the expansion $(I + \epsilon \frac{\partial{\w}}{\partial \y})^{-1} = \I_{n} - \epsilon \frac{\partial{\w}}{\partial \y} + \mathcal{O}(\epsilon^2)$. Rearranging for $\dot{\y}$, we get:
\begin{equation*}
    \dot{\y}  = \left( \I_{n} - \epsilon\frac{\partial\w}{\partial \y}(\y,t) \right) \left( \dot{\x}(t)  - \epsilon \frac{\partial{\w}}{\partial t}(\y,t) \right) + \mathcal{O}(\epsilon^3)
\end{equation*}
Substituting the expansion of $\dot{\x}$ in \eqref{eq:xExpansion2ndorder} and using the homological equation $\frac{\partial{\w}}{\partial t} = \f_1(\y, t) - \g_1(\y)$, we get a.e.:
\begin{equation}\label{eq:dot_y_dot_x}
    \dot{\y}  = \left( \I_{n} - \epsilon \frac{\partial\w}{\partial \y}(\y,t) \right) \left( \epsilon \f_1(\x, t)  - \epsilon (\f_1(\y, t) - \g_1(\y)) \right) + \mathcal{O}(\epsilon^2)
\end{equation}
Expanding $\f_1(\x, t) = \f_1(\y + \epsilon \w(\y,t), t)$ around $\y$ using \cref{prop:partial_L_Taylor}, we get:
\begin{align*}
\f_1(\x,t)&=\f_1(\y,t)+\JL\f_1(\y,t;\begin{bsmallmatrix}
            \epsilon \w(\y, t)& \I_n & \zero_{n}\\ 0&\zero_{1\times n}&1
        \end{bsmallmatrix})\begin{bsmallmatrix}
            \epsilon \w(\y, t)\\0
        \end{bsmallmatrix} + \mathcal{O}(\|
            \epsilon \w(\y, t)\|^2)\\
    &= \f_1(\y, t) + \epsilon \JL_{\y} \f_1 (\y,t) \w(\y, t) + \mathcal{O}(\epsilon^2).
\end{align*}
Substituting back into \eqref{eq:dot_y_dot_x}, the $\mathcal{O}(\epsilon)$ terms cancel, and we get the following a.e.:
\begin{align*}
    \dot{\y}  &= \left( \I_{n} - \epsilon \frac{\partial\w}{\partial \y}(\y,t) \right) \Bigg( \epsilon(  \f_1(\y, t) + \epsilon \JL_{\y}\f_1 (\y,t) \w(\y, t) + \mathcal{O}(\epsilon^2))
    - \epsilon (\f_1(\y, t) - \g_1(\y)) \Bigg) + \mathcal{O}(\epsilon^2)\\
    &=\left( \I_{n} - \epsilon \frac{\partial\w}{\partial \y}(\y,t) \right) \Bigg( \epsilon(  \epsilon \JL_{\y}\f_1 (\y,t) \w(\y, t) + \mathcal{O}(\epsilon^2))
     + \epsilon \g_1(\y) \Bigg) + \mathcal{O}(\epsilon^2)\\
    &=\left( \I_{n} - \epsilon \frac{\partial\w}{\partial \y}(\y,t) \right)  \epsilon \g_1(\y)  + \mathcal{O}(\epsilon^2)
    =\epsilon \g_1(\y) + \mathcal{O}(\epsilon^2).
\end{align*}
Hence, the function $\y(t;\epsilon)$ satisfies the following differential equation:
\begin{equation} \label{eq:y_dynamics}
    \dot{\y}  = \epsilon \g_1(\y) +  \epsilon^2 \h^\dagger(\y, t, \epsilon).
\end{equation}

Subtracting the full averaged system \eqref{eq:averaged_sys} from the transformed system \eqref{eq:y_dynamics}:
\begin{equation*}
    \dot{\y}\ -\dot{\Bar{\y}}\ = \epsilon (\g_1(\y) - \g_1(\Bar{\y})) + \epsilon^2 (\h^\dagger(\y, t, \epsilon) - \bgHat(\Bar{\y}, t, \epsilon)).
\end{equation*}
Let $\E(t) = \E(t;\epsilon) = \y(t;\epsilon) - \Bar{\y}(t;\epsilon)$. Then,
\begin{align*}
    \E(t) &= \int_0^t \epsilon (\g_1(\y(\tau)) - \g_1(\Bar{\y}(\tau))) \, d\tau   
    + \int_0^t \epsilon^2 (\h^\dagger(\y (\tau), \tau, \epsilon )- \bgHat ( \Bar{\y}(\tau), \tau, \epsilon )) \, d\tau. 
\end{align*}
Since $\g_1$ is L-smooth,  it is locally Lipschitz continuous, i.e.,   there exists a constant $L_{\g_1}>0$ such that $\|\g_1(\y)-\g_1(\Bar{\y})\|\le L_{\g_1}\|\y-\Bar{\y}\|$, {for all $\y$ and $\Bar{\y}$ in $D$}.
Next, for $\epsilon \in (0,\epsilon_0]$, we define 
\begin{align*}
M_{\hat{\g}}
=\sup \left\{\hat{\g}(\y,t,\epsilon):\;\y \in D,\; t\in\left[0,\frac{\kappa}{\epsilon}\right]\right\}, \quad 
M_{\h^\dagger}&=\sup \left\{\h^\dagger(\y,t,\epsilon):\;\y \in D, \;t\in\left[0,\frac{\kappa}{\epsilon}\right]\right\}.
\end{align*}
Then, using Minkowski's integral inequality, we see that
    \begin{align*}
        \|\E(t)\|&\le \int_0^t \epsilon \|\g_1(\y(\tau)) - \g_1(\Bar{\y}(\tau))\|d\tau 
        + \epsilon^2 M_{\hat{\g}}t+ \epsilon^2 M_{\h^\dagger}t\\
        &\le \int_0^t \epsilon L_{\g_1}\|\E(\tau)\|d\tau + \epsilon^2 M_{\hat{\g}}t+ \epsilon^2 M_{\h^\dagger}t
        = (\epsilon^2 M_{\hat{\g}}+\epsilon^2 M_{\h^\dagger})t + \epsilon L_{\g_1}\int_0^t \|\E(\tau)\|d\tau.
        \end{align*}
        By using the Specific Gronwall Lemma \cite[Lemma 1.3.3]{sanders2007averaging} we get
    \begin{align*}
        \|\E(t)\|&\le   \frac{(\epsilon^2 M_{\hat{\g}}+\epsilon^2 M_{\h^\dagger})}{\epsilon} \frac{1}{L_{\g_1}} \Big(e^{\epsilon L_{\g_1}t}-1\Big)=\epsilon\frac{M_{\hat{\g}}+M_{\h^\dagger}}{L_{\g_1}}(e^{\epsilon {L_{\g_1}t}}-1),
    \end{align*}
    for all $t\in T$, i.e.,  we get $\|\E(t;\epsilon)\|=\|\y(t;\epsilon)-\Bar{\y}(t;\epsilon)\|=\mathcal{O}(\epsilon)$, as required.
\end{proof}

Note that we refer to \eqref{eq:averaged_sys} as the ``full averaged system''. Now we are in a position to formally define the nonsmooth (Lipschitzian) first-order averaging system.
 
\begin{theorem}\label{thm:first_order_averaging}
Suppose that assumptions (A1)-(A3) hold. 
Let $\x(t)=\x(t;\epsilon)$ be the solution of \eqref{eq:average-canonical form_intro} for $\epsilon \in [0,\epsilon_0]$, for some $\epsilon_0>0$, and let $\Bar{\y}(t)=\Bar{\y}(t;\epsilon)$ be a solution of \eqref{eq:averaged_sys}. Then  the nonsmooth first-order averaging system
    \begin{align}\label{eq:yExpansion1storder_truncated}
     \dot{\zz}_1 = \epsilon \Bar{\f}_1(\zz_1)
 \end{align} 
admits a (classical) solution $\zz_1(t)=\zz_1(t;\epsilon)$, which satisfies
\begin{equation}\label{eq:y_bar_y_only_g1}
       \|\Bar{\y}(t;\epsilon)-\zz_1(t;\epsilon)\|=\mathcal{O}(\epsilon),
    \end{equation}
and
\begin{align}\label{eq:x_bar_y_only_g1}
   \|\x(t;\epsilon)-\zz_1(t;\epsilon)\|=\mathcal{O}(\epsilon),
\end{align}
for $t=\mathcal{O}(1/\epsilon)$.
\end{theorem}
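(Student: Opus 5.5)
The plan has three steps: show that the truncated system is well posed, compare it with the full averaged system by a Gronwall argument, and then chain the estimates through $\y$ back to $\x$.

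\textbf{Existence.} By \cref{prop.g1w1}, $\g_1=\Bar{\f}_1$ is L-smooth, hence locally Lipschitz continuous on $X$. Classical Picard--Lindel\"of theory therefore gives a unique $C^1$ solution $\zz_1(t;\epsilon)$ of \eqref{eq:yExpansion1storder_truncated} for the same initial condition $\zz_1(0)=\x(0)$.

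To keep the solution on the time scale $t\in[0,\kappa/\epsilon]$, I would rescale time by $\tau=\epsilon t$. This turns \eqref{eq:yExpansion1storder_truncated} into the $\epsilon$-independent system $\frac{d\zz_1}{d\tau}=\Bar{\f}_1(\zz_1)$. I would then assume, as is standard in averaging and implicit in the setting of \cref{thm:y_bar_y_bounds}, that this solution remains in the interior of the compact set $D$ for $\tau\in[0,\kappa]$. Shrinking $\kappa$ if necessary is allowed.

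\textbf{Comparison of $\Bar{\y}$ and $\zz_1$.} Set $\E_1(t)=\Bar{\y}(t;\epsilon)-\zz_1(t;\epsilon)$, with matching initial data; alternatively $\Bar{\y}(0)=\x(0)+\mathcal{O}(\epsilon)$ is also acceptable. Subtracting the two integral equations gives
$$\|\E_1(t)\|\le \|\E_1(0)\|+\epsilon L_{\g_1}\int_0^t\|\E_1(s)\|ds+\epsilon^2 M_{\hat{\g}}\,t .$$
Here $L_{\g_1}$ is the Lipschitz constant of $\g_1$ on $D$, and $M_{\hat{\g}}$ is the essential bound of $\bgHat$ from \cref{thm:y_bar_y_bounds}. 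The specific Gronwall lemma \cite[Lemma 1.3.3]{sanders2007averaging} then yields
$$\|\E_1(t)\|\le \left(\|\E_1(0)\|+\epsilon\frac{M_{\hat{\g}}}{L_{\g_1}}\right)e^{\epsilon L_{\g_1}t}.$$
For $t\le\kappa/\epsilon$ this is $\mathcal{O}(\epsilon)$, which proves \eqref{eq:y_bar_y_only_g1}.

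A technical point is that $\Bar{\y}$ may only be a Filippov/Carath\'eodory solution. The integral form still holds because $\Bar{\y}$ is absolutely continuous, so Gronwall applies unchanged.

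\textbf{Chaining to $\x$.} The triangle inequality gives
$$\|\x-\zz_1\|\le\|\x-\y\|+\|\y-\Bar{\y}\|+\|\Bar{\y}-\zz_1\|.$$
The first term is $\mathcal{O}(\epsilon)$ by \cref{lemma:g_1_g_2_hatg_Loc_Lip.corollary}. The second is $\mathcal{O}(\epsilon)$ by \cref{thm:y_bar_y_bounds}. The third is $\mathcal{O}(\epsilon)$ by the previous step. All three hold on $[0,\kappa/\epsilon]$, which gives \eqref{eq:x_bar_y_only_g1}.

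\textbf{Main obstacle.} I expect the hardest part to be the a priori containment: ensuring that $\zz_1$ and $\Bar{\y}$ stay in $D$ on the whole $\mathcal{O}(1/\epsilon)$ interval, so that the Lipschitz and essential-boundedness constants are uniform in $\epsilon$. I would handle this with a bootstrap argument. Choose $D$ as a compact neighborhood containing a $\delta$-tube around the rescaled orbit of $\zz_1$. Let $t^*$ be the first exit time of $\Bar{\y}$ from that tube. Apply the Gronwall estimate on $[0,t^*]$, which shows that $\|\E_1\|\le C\epsilon<\delta$ for $\epsilon$ small. This contradicts exit before $\kappa/\epsilon$.
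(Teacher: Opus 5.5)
Your proposal is correct and follows essentially the same route as the paper. The paper uses a Gronwall estimate for $\Bar{\y}-\zz_1$, driven by the Lipschitz constant of $\g_1$ and the $\epsilon^2 M_{\hat{\g}}t$ forcing term, and then the triangle inequality through $\y$ using \cref{lemma:g_1_g_2_hatg_Loc_Lip.corollary} and \cref{thm:y_bar_y_bounds}. Your additions make explicit hypotheses the paper leaves implicit: Picard--Lindel\"of existence for $\zz_1$, an $\mathcal{O}(\epsilon)$ initial mismatch, and the exit-time bootstrap that keeps both trajectories in a fixed compact tube so the constants are uniform in $\epsilon$.
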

\begin{proof}
    The proof follows similar steps  as the second part of the proof of \cref{thm:y_bar_y_bounds}, i.e., proving the relation \eqref{eq:y_bar_y_est}. 
\end{proof}
\begin{proof}
First, recall that $\g_1(\zz_1)=\Bar{\f}_1(\zz_1)$. Then, letting $\E(t)=\E(t;\epsilon)=\Bar{\y}(t;\epsilon)-\zz_1(t;\epsilon)$, we see that 
  \begin{align*}
        \|\E(t)\| &= \|\Bar{\y}(t)-\zz_1(t)\|
        = \Big|\Big|\int_0^t \epsilon (\g_1(\Bar{\y}(\tau)) - \g_1(\zz_1(\tau)))  +\epsilon^2\hat{\g}(\Bar{\y}(\tau),\tau,\epsilon)) d\tau\Big|\Big|.
    \end{align*}
 Using Minkowski's integral inequality, we get
    \begin{align*}
        \|\E(t)\|&\le \int_0^t \epsilon \|\g_1(\Bar{\y}(\tau)) - \g_1(\zz_1(\tau))\|d\tau +  \epsilon^2 M_{\hat{\g}}t\\
        &\le \int_0^t \epsilon \|\g_1(\Bar{\y}(\tau)) - \g_1(\zz_1(\tau))\|d\tau 
         + \epsilon^2 M_{\hat{\g}}t\\
        &\le \int_0^t \epsilon L_{\g_1}\|\E(\tau)\|d\tau + \epsilon^2 M_{\hat{\g}}t
        = \epsilon^2 M_{\hat{\g}}t + \epsilon L_{\g_1}\int_0^t \|\E(\tau)\|d\tau.
        \end{align*}
        By using the Specific Gronwall Lemma \cite[Lemma 1.3.3]{sanders2007averaging} we get
    \begin{align*}
        \|\E(t)\|&\le   \frac{\epsilon^2 M_{\hat{\g}}}{\epsilon} \frac{1}{L_{\g_1}} e^{\epsilon L_{\g_1}t}  - \frac{\epsilon^2 M_{\hat{\g}}}{\epsilon} \frac{1}{L_{\g_1}}.
    \end{align*}
    As $\epsilon\rightarrow 0$, we get
    \begin{align*}
         \|\E(t)\|\le  \frac{\epsilon M_{\hat{\g}}}{L_{\g_1}}(e^{\epsilon L_{\g_1}t}-1).
    \end{align*}
    With $t=\mathcal{O}(1/\epsilon)$, we get \begin{equation}\label{eq:y_bar_y_only_g1_inproof}
        \|\E(t;\epsilon)\|=\|\Bar{\y}(t;\epsilon)-\zz_1(t;\epsilon)\|=\mathcal{O}(\epsilon).
    \end{equation}
Therefore, from \eqref{eq:x_y_bounds}, \eqref{eq:y_bar_y_est} and \eqref{eq:y_bar_y_only_g1_inproof}, we see that for $t=\mathcal{O}(1/\epsilon)$, we get:
\begin{align*}
    \|\x(t;\epsilon)-\zz_1(t;\epsilon)\|&\le \|\x(t;\epsilon)-\y(t;\epsilon)\|+\|\y(t;\epsilon) -\Bar{\y}(t;\epsilon)\|+\|\Bar{\y}(t;\epsilon)-\zz_1(t;\epsilon)\|\\
    &=\mathcal{O}(\epsilon).
    \end{align*}
\end{proof}
\begin{corollary}\label{cor:1st_order_bound}
Assume the setting of \cref{thm:first_order_averaging}. 
Let $\y(t)=\y(t;\epsilon)$ be the solution of \eqref{eq:yExpansion2ndorder} for $\epsilon \in [0,\epsilon_0]$, for some $\epsilon_0>0$.
Then 
\begin{align*}
   \|\y(t;\epsilon)-\zz_1(t;\epsilon)\|=\mathcal{O}(\epsilon),\quad \text{for } t=\mathcal{O}(1/\epsilon).
\end{align*}
\end{corollary}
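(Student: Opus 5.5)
The plan is a triangle inequality through $\Bar{\y}$, using the bounds already established. First, \cref{thm:y_bar_y_bounds} gives $\|\y(t;\epsilon)-\Bar{\y}(t;\epsilon)\|=\mathcal{O}(\epsilon)$ for $t\in[0,\kappa/\epsilon]$. Here $\y$ is the transformed trajectory $\capv(\x(t),t,\epsilon)$, which by \cref{lemma:g_1_g_2_hatg_Loc_Lip} is a Filippov solution of \eqref{eq:yExpansion2ndorder}. Second, \eqref{eq:y_bar_y_only_g1} in \cref{thm:first_order_averaging} gives $\|\Bar{\y}(t;\epsilon)-\zz_1(t;\epsilon)\|=\mathcal{O}(\epsilon)$ on the same time scale. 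Combining the two yields
\begin{equation*}
\|\y(t;\epsilon)-\zz_1(t;\epsilon)\|\le \|\y(t;\epsilon)-\Bar{\y}(t;\epsilon)\|+\|\Bar{\y}(t;\epsilon)-\zz_1(t;\epsilon)\|=\mathcal{O}(\epsilon),
\end{equation*}
for $t=\mathcal{O}(1/\epsilon)$.

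The main point to check is that the hypotheses of those results hold simultaneously. We need a common compact set $D$ containing all trajectories, a common $\epsilon_0$, and consistent initial data. If $\y(0)$ and $\zz_1(0)$ differ, they do so by only $\epsilon\vv(\x(0),0,\epsilon)=\mathcal{O}(\epsilon)$, and this is absorbed by the Gronwall estimate because it adds a term $\mathcal{O}(\epsilon)e^{\epsilon L_{\g_1}t}$, which stays $\mathcal{O}(\epsilon)$ for $t\le\kappa/\epsilon$.

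As a cross-check, there is an alternative route that avoids $\Bar{\y}$ entirely. In the proof of \cref{thm:y_bar_y_bounds} we obtained \eqref{eq:y_dynamics}, $\dot{\y}=\epsilon\g_1(\y)+\epsilon^2\h^\dagger$ with $\h^\dagger$ essentially bounded on $D$. Subtracting $\dot{\zz}_1=\epsilon\g_1(\zz_1)$, we can use the Lipschitz constant $L_{\g_1}$ of the L-smooth function $\g_1$ on $D$ and apply the specific Gronwall lemma. This directly gives $\|\y-\zz_1\|\le \|\y(0)-\zz_1(0)\|e^{\kappa L_{\g_1}}+\epsilon\frac{M_{\h^\dagger}}{L_{\g_1}}(e^{\kappa L_{\g_1}}-1)=\mathcal{O}(\epsilon)$.

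I expect no step to be a real obstacle. The only subtlety is that $\y$ is absolutely continuous rather than $C^1$, so the Gronwall argument must be done in integral form, which is valid because \cref{lemma:g_1_g_2_hatg_Loc_Lip} guarantees that the equation holds a.e.
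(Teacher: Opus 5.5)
Your proposal is correct and is essentially the paper's proof, which consists only of the triangle inequality $\|\y-\zz_1\|\le\|\y-\Bar{\y}\|+\|\Bar{\y}-\zz_1\|$ combined with \cref{thm:y_bar_y_bounds} and \eqref{eq:y_bar_y_only_g1} from \cref{thm:first_order_averaging}. Your additional remarks go beyond what the paper writes but are sound: the treatment of an $\mathcal{O}(\epsilon)$ initial mismatch, and the direct Gronwall cross-check via \eqref{eq:y_dynamics}.
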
\label{lemma:equivalenceOftermsWhen_g1=0}
\begin{proof}
    We see from \cref{thm:y_bar_y_bounds} and \cref{thm:first_order_averaging} that we have
    \begin{align*}
        \|\y(t;\epsilon)-\zz_1(t;\epsilon)\|&\le\|\y(t;\epsilon)-\Bar{\y}(t;\epsilon)\|+\|\Bar{\y}(t;\epsilon)-\zz_1(t;\epsilon)\|=\mathcal{O}(\epsilon),\quad \text{for }t=\mathcal{O}(1/\epsilon).
    \end{align*}
\end{proof}

  \cref{thm:first_order_averaging} provides the first-order bound that is needed to prove that the NLTP system and its nonsmooth  first-order averaging system ``share'' stability properties, as seen in the next result,  which concerns practical stability.
 
\begin{theorem}\label{thm:asymptotic_implies_practical}
Suppose that assumptions (A1)-(A3) hold. 
 Let $\x^*\in X \subseteq \real^n$ be locally uniformly asymptotically stable 
 for system \eqref{eq:yExpansion1storder_truncated}. Then there exists $\epsilon>0$ such that $\x^*$ is locally practically uniformly asymptotically stable for system \eqref{eq:average-canonical form_intro}.
\end{theorem}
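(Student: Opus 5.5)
The plan is to follow the standard route from closeness of trajectories on $\mathcal{O}(1/\epsilon)$ time intervals to practical stability, as in Moreau--Aeyels \cite{moreau2000practical} and D\"urr et al.\ \cite{durr2013lie}. The approximation result is \cref{thm:first_order_averaging}. The averaged system \eqref{eq:yExpansion1storder_truncated} is autonomous. Rescaling time by $s=\epsilon t$ turns it into the $\epsilon$-independent system $\zz_1'=\Bar{\f}_1(\zz_1)$, and $\x^*$ is LUAS for that system. The estimate \eqref{eq:x_bar_y_only_g1} then holds on the fixed interval $s\in[0,\kappa]$, uniformly for initial data in a compact set, since the constants $L_{\g_1},M_{\hat{\g}},M_{\h^\dagger}$ depend only on a compact set containing the trajectories. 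Here I take $D=\{\x^*\}$ in \cref{def:practical_stability}.

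\textbf{Step 1 (uniform closeness).} Fix $c_1>0$ and choose a compact neighborhood $K$ of $\x^*$ inside the basin of the averaged system. I will show that for every $\kappa>0$ and $\delta>0$ there is $\hat\epsilon$ such that, for all $\epsilon<\hat\epsilon$, all $t_0$ and all $\x_0\in K$ with $\zz_1(t_0)=\x(t_0)=\x_0$,
\begin{equation*}
\|\x(t;\epsilon)-\zz_1(t;\epsilon)\|<\delta \quad\text{for } t\in[t_0,t_0+\kappa/\epsilon].
\end{equation*}
Uniformity in $t_0$ comes from $T$-periodicity in (A2): shifting $t_0$ only shifts the phase of $\w$, and the bounds in the proofs of \cref{thm:y_bar_y_bounds} and \cref{thm:first_order_averaging} are phase independent. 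Before Gronwall applies, I must check that $\x$ stays in a compact set $D$. I will handle this with a standard continuation argument: if $\zz_1$ stays in $K$, then $\x$ stays in $B_\delta(K)\subset X$ until the first exit time, and that exit time cannot occur before $t_0+\kappa/\epsilon$.

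\textbf{Step 2 (the three properties).} For practical uniform stability, given $c_1$ I use uniform stability of the averaged system to pick $c_2$ so that $\zz_1$ stays in $B_{c_1/2}(\x^*)$. Uniform attractivity gives a rescaled time $s_f$ after which $\zz_1\in B_{c_2/2}(\x^*)$ for all initial data in $B_{c_2}(\x^*)$; I take $\kappa>s_f$ and $\delta<c_2/2$. On the first window $\x$ stays in $B_{c_1}(\x^*)$, and at time $t_0+\kappa/\epsilon$ it lies in $B_{c_2}(\x^*)$. I restart at that time; this is legitimate because $\hat\epsilon$ is uniform in $t_0$ and in initial data in $B_{c_2}(\x^*)$. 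Induction over consecutive windows then gives the bound for all $t\ge t_0$. Practical uniform boundedness follows the same way, using uniform boundedness of $\zz_1$ from a larger ball. Practical uniform attractivity follows by choosing $\kappa$ as the averaged system's convergence time into $B_{c_1/2}(\x^*)$ and $\delta<c_1/2$, with $t_f=\kappa/\epsilon$, and then applying the stability argument to keep $\x$ there afterward. Because \cref{def:practical_stability} allows $t_f$ to depend on $\epsilon$ only through the uniformity requirement, I will either phrase $t_f$ in rescaled time or take $\hat\epsilon$ as a fixed lower bound, as is done in \cite{durr2013lie}.

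\textbf{Main obstacle.} The delicate step is Step 1: getting the $\mathcal{O}(\epsilon)$ bound of \cref{thm:first_order_averaging} uniform in $t_0$ and in initial conditions, and ruling out trajectories leaving the region where the local Lipschitz constants hold. I would first try to make explicit the constants $M_{\hat{\g}},M_{\h^\dagger},L_{\g_1}$ on $D=\text{cl}(B_{r}(\x^*))$, and the $\epsilon_0$ from \cref{thm:NonsmoothTransformationValidity}, all of which are independent of $t_0$ by periodicity. With these constants fixed, the argument reduces to the abstract result of \cite{moreau2000practical}: converging trajectories plus uniform asymptotic stability of the limit system imply practical uniform asymptotic stability.
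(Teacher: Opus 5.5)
Your proposal follows the same route as the paper, whose proof just cites the $\mathcal{O}(\epsilon)$ closeness of $\x$ and $\zz_1$ on intervals of length $\mathcal{O}(1/\epsilon)$ from \cref{thm:first_order_averaging} and concludes practical stability; your uniformity in $(t_0,\x_0)$, continuation argument, and window-restart induction are the Moreau--Aeyels details the paper leaves implicit. One caution: only your first fix for $t_f$ (working in the slow time $s=\epsilon t$, which the paper's $t_f=\mathcal{O}(1/\epsilon)$ also tacitly does) is valid, because along \eqref{eq:average-canonical form_intro} one has $\|\x(t)-\x_0\|\le \epsilon M(t-t_0)$, with $M$ a bound on $\|\f\|$ near the trajectory, so no $\epsilon$-independent $t_f$ in the original time can yield attractivity, and ``taking $\hat\epsilon$ as a lower bound'' does not fix this.
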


\begin{proof}
We see from \cref{thm:first_order_averaging} that for any $\epsilon>0$, there exists $t_f=\mathcal{O}(1/\epsilon)$ such that for all $t\in[0,t_f]$, we have $\|\x(t;\epsilon)-\zz_1(t;\epsilon)\|=\mathcal{O}(\epsilon)$, i.e., the error bound between $\x$ and $\zz_1$ can be made arbitrarily small for any positive finite period of time. Hence, we see 
that the result follows.
\end{proof}

\subsection{Nonsmooth second-order averaging theory}

 Now we shift our focus to formally define the nonsmooth (L-smooth) second-order averaging system and remark on its computation. 
 With an aim of deriving a second-order averaging systems, we 
use  lexicographic differentiation, which necessitates the strengthening of assumption (A1) from local Lipschitz continuity to piecewise continuous $C^{\infty}$ (while recalling that $PWC^{\infty}$ covers a broad range of functions, such as $\min$, $\max$, abs, etc., and implies lexicographic smoothness via {\cref{prop:PWC_implies_L-smooth}}):
 \begin{itemize}
      \item[\textbf{(A4)}] 
         $\f(\x,t,\epsilon)$ is $PWC^{\infty}$ in $\x$.
 \end{itemize}

For second-order averaging, we introduce the following function:
\begin{align}
     \g_2(\y) 
     &:= \frac{1}{T}\int_{0}^{T}(\f_2(\y,t)+[\w,\f_1]_{\rm L}(\y,t)+[\w,\g_1]_{\rm L}(\y,t)) dt.\label{eq.g2}
\end{align} 
where we use the natural L-Lie bracket with respect to $\y$, i.e.,
\begin{align*}
[\w,\f_1]_{\rm L}(\y,t)
&:=[\w(\cdot,t),\f_1(\cdot,t)]_{\rm L}(\y)
=\J^{\rm L}\w(\y,t;(\I_{n},\zero)) \f_1(\y,t)-\J^{\rm L}\f_1(\y,t;(\I_{n},\zero))\w(\y,t),\\
[\w,\g_1]_{\rm L}(\y,t)
&:=[\w(\cdot,t),\g_1(\cdot,t)]_{\rm L}(\y)
=\J^{\rm L}\w(\y,t;(\I_{n},\zero)) \g_1(\y,t)-\J^{\rm L}\g_1(\y,t;(\I_{n},\zero))\w(\y,t).
\end{align*}

\begin{proposition}\label{lemma:g2_PWD_t_indep}
Suppose that assumptions (A2)-(A4) hold. Then, given any compact subset $D \subset X$, the function $\g_2(\y)$ is $PWD^{\infty}$ on $D$.
\end{proposition}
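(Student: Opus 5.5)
The plan is to split $\g_2$ into its three integrands and show that each average is $PWD^{\infty}$ on $D$. Then $\g_2$ is $PWD^{\infty}$ as a finite sum, since sums (and products, compositions) of $PWD^{\infty}$ functions stay in the class. To do this we use a common refinement of the finitely many pieces.

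Write
\[
\g_2(\y)=\frac{1}{T}\int_0^T\f_2(\y,t)\,dt+\frac{1}{T}\int_0^T[\w,\f_1]_{\rm L}(\y,t)\,dt+\frac{1}{T}\int_0^T[\w,\g_1]_{\rm L}(\y,t)\,dt .
\]

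\textbf{Ingredients.} First record the regularity of everything involved.
\begin{itemize}
\item By (A4) and (A3), $\f_1(\cdot,\cdot)=\f(\cdot,\cdot,0)$ is $PWC^{\infty}$.
\item $\f_2=2\partial_\epsilon\f(\cdot,\cdot,0)$ inherits piecewise $C^\infty$ structure on the same pieces. This holds because analyticity in $\epsilon$ lets us differentiate the $C^\infty$ restrictions on each piece, and the continuous extensions to closures carry over via the power series.
\item Integrating $\f_1$ over $[0,T]$ keeps us in $PWC^{\infty}$, as already used in the proof of \cref{lemma:PWSC_implies_PWC}. Hence $\g_1=\bar{\f}_1$ is $PWC^{\infty}$ on $D$.
\item $\w(\y,t)=\int_0^t\f_1(\y,s)\,ds-t\g_1(\y)$ is $PWC^{\infty}$ in $(\y,t)$ on $D\times[0,T]$.
\end{itemize}

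\textbf{First term.} For $\frac{1}{T}\int_0^T\f_2\,dt$ the argument is the same as in \cref{lemma:PWSC_implies_PWC}. The integrand is bounded on $D\times[0,T]$ by the continuous extensions, so dominated convergence gives a $PWD^{\infty}$ function of $\y$.

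\textbf{Bracket terms.} Here I apply \cref{lemma.llie} directly with $\M=\I_n$, once to the pair $(\w,\f_1)$ and once to the pair $(\w,\g_1)$. The latter is regarded as a $t$-independent $PWC^{\infty}$ function of $(\y,t)$. Both hypotheses of \cref{lemma.llie} are met by the ingredients above. Alternatively, one can expand a.e. as in that proof. By \cref{prop:LDerivative_PWC_PWD}, the L-derivatives $\JL\w(\y,t;(\I_n,\zero))$, $\JL\f_1(\y,t;(\I_n,\zero))$ and $\JL\g_1(\y,t;(\I_n,\zero))$ agree a.e. with the classical partials in $\y$ and are $PWD^{\infty}$. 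Their products with the $PWC^{\infty}$ functions $\f_1,\g_1,\w$ are $PWD^{\infty}$, and the $t$-integral is handled as before.

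\textbf{Main obstacle.} The delicate step is the passage ``$PWD^{\infty}$ integrand in $(\y,t)$ $\Rightarrow$ $PWD^{\infty}$ average in $\y$.'' The pieces in $(\y,t)$-space project to $\y$-space, and their boundaries may sweep out regions rather than lower-dimensional sets. The paper's earlier lemmas implicitly handle this through dominated convergence plus the piecewise structure of $\int_0^T\f\,dt$. I would mirror that argument, justifying differentiation under the integral by the uniform a.e. bound on the partials over the compact set $D\times[0,T]$. I would also note that the values on the measure-zero sets $Z_{\f}$ do not affect the integrals, which is how the paper's convention of setting these terms to $\zero$ there is absorbed.
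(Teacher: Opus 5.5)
\textbf{Same route as the paper, but the route does not close.} Your decomposition and the results you invoke are exactly the paper's proof: $\f_1,\f_2$ are $PWC^{\infty}$, hence so are $\g_1$ and $\w$; \cref{lemma.llie} handles the two bracket averages; sums stay in the class. The problem is not a mismatch with the paper but that this route has a genuine gap, in your version and in the paper's.

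\textbf{First gap: the step you flag.} The main obstacle you identify is real. In the paper it is the unargued last sentence of the proof of \cref{lemma:PWSC_implies_PWC}, on which \cref{lemma.llie} and the $PWC^{\infty}$ claims for $\g_1,\w$ rest. Your remedy does not close it. Dominated convergence with a uniform bound on the $\y$-partials only shows that a $t$-average is differentiable. It does not show the average is $C^{\infty}$ off finitely many piece boundaries. Smoothness of $\y\mapsto\int_0^T\f(\y,t)\,dt$ is lost at every $\y$ where a piece boundary in $(\y,t)$-space is tangent to the $t$-direction, and there can be infinitely many such $\y$.

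Concretely, take $n=1$ and $f(x,t,\epsilon)=\frac{\epsilon}{2}|x-\chi(t)|$, with $\chi(t)=\psi(\sin(2\pi t/T))$, $\psi(s)=e^{-1/s^2}\sin(1/s)$, $\psi(0)=0$. Then $\chi$ is smooth and $T$-periodic, and its nondegenerate local extremum values $c_k\neq 0$ accumulate at $0$. Here $f_1=0$, $g_1=0$, $w=0$ and $f_2(y,t)=|y-\chi(t)|$, which has two pieces in $(y,t)$ with $y$-derivatives $\pm1$. Hence $g_2(y)=\frac{1}{T}\int_0^T|y-\chi(t)|\,dt$ and $g_2'(y)=\frac{1}{T}\bigl(2m(y)-T\bigr)$, where $m(y)=\mu\{t\in[0,T]:\chi(t)<y\}$. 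Near each $c_k$, $m$ picks up a one-sided term of order $\sqrt{|y-c_k|}$ from the extremum point, and every other contribution is nondecreasing in $y$, so nothing cancels it. Thus $m$ is not Lipschitz near $c_k$, and $g_2$ is not $C^2$ at any $c_k$. In one dimension, $PWD^{\infty}$ means $C^{\infty}$ off finitely many points, so $g_2$ is not $PWD^{\infty}$ on any compact $D\ni 0$, even though (A2)--(A4) hold. So the proposition is false as stated, and no argument of this shape can prove it without extra hypotheses.

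\textbf{Second, independent gap.} You also use \cref{prop:LDerivative_PWC_PWD} for the L-derivatives of $\w,\f_1,\g_1$. The definition of $PWC^{\infty}$ only requires the function, not its derivatives, to extend continuously to $\text{cl}(X_k)$, so that proposition fails as well. Let $\varphi(y)=y^2\sin(1/y)$ for $y>0$ and $\varphi(y)=0$ for $y\le 0$; this is $PWC^{\infty}$ and L-smooth. Take $f(x,t,\epsilon)=\varphi(x)\cos t+\sin t$, which is Law 1 of \cref{prop:ESC_laws} with $J=\varphi$. Then $g_1=0$, and \cref{lemma:g1_zero_simplify_g2} gives $g_2=-\J^{\rm L}\varphi$, i.e. $g_2(y)=\cos(1/y)-2y\sin(1/y)$ for $y>0$. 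This has no limit as $y\downarrow 0$, so again $g_2$ is not $PWD^{\infty}$ near $0$.

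\textbf{What would make your argument work.} Your term-by-term argument goes through under extra hypotheses such as the following.
\begin{itemize}
\item The pieces of $\f_1,\f_2$ in $(\y,t)$-space are products $X_k\times\real$, i.e. the nonsmoothness is $t$-independent, as for the control-affine systems of \cref{sec:NonsmoothESC}.
\item $\f_1$ and $\f_2$ are $C^{\infty}$ on each $X_k\times\real$.
\item $\f_1$, $\partial_{\y}\f_1$ and $\f_2$ extend continuously to $\text{cl}(X_k)\times\real$.
\end{itemize}
Then every integrand in $\g_2$ is $C^{\infty}$ on $X_k\times\real$, with derivatives bounded on $K\times[0,T]$ for each compact $K\subset X_k$. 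Each average is therefore $C^{\infty}$ on $X_k$ and extends continuously to $\text{cl}(X_k)$, and $\g_2$ is $PWD^{\infty}$ with the same pieces $X_k$.
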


\begin{proof}
From assumption (A4), we see that both $\f_1(\y,t)$ and $\f_2(\y,t)$ are $PWC^{\infty}$ in $\y$. Hence, the function $\g_1:X\to\real^n$ defined in \eqref{eq.g1} and the function $\w$ defined in \eqref{eq.w1} are $PWC^{\infty}$. From \cref{lemma.llie}, and from the fact that the sum of $PWC^{\infty}$ and $PWD^{\infty}$ functions is $PWD^{\infty}$, it follows that $\g_2$ is $PWD^{\infty}$ on $D$. 
\end{proof}

Since we are concerned with second-order terms in this subsection, we reintroduce $\Bar{\y}(t)$ to be the solution of the system
\begin{align}\label{eq:Bar_y_with_g2}
    \dot{\Bar{\y}} = \epsilon \g_1(\Bar{\y})+\frac{\epsilon^2}{2}\g_2(\Bar{\y})+ \epsilon^3 \tilde{\g}(\Bar{\y},t,\epsilon),
\end{align}
where $\tilde{\g}(\Bar{\y},t,\epsilon)$ is measurable and essentially bounded. Hence, there exists a Fillipov solution $\Bar{\y}(t;\epsilon)$ of system \eqref{eq:Bar_y_with_g2}.

\begin{theorem}\label{thm:second_order_averaging_BigO_bound}
 Suppose that assumptions (A2)-(A4) hold.
Let $\x(t)=\x(t;\epsilon)$ be the solution of \eqref{eq:average-canonical form_intro} for $\epsilon \in [0,\epsilon_0]$, for some $\epsilon_0>0$, such that $\x(t) \in D$ for some compact set $D \subset X \subseteq \real^n$. Let $\Bar{\y}(t)=\Bar{\y}(t;\epsilon)$ be a solution of system \eqref{eq:Bar_y_with_g2}.
Then the nonsmooth second-order averaging system 
    \begin{align}\label{eq:yExpansion2ndorder_truncated}
     \dot{\zz}_2 = \epsilon \g_1(\zz_2)+\frac{\epsilon^2}{2}\g_2(\zz_2),
 \end{align} 
admits a (Filippov) solution $\zz_2(t)=\zz_2(t;\epsilon)$, which satisfies
\begin{equation}\label{eq:y_bar_g1_g2_BigO}
       \|\Bar{\y}(t;\epsilon)-\zz_2(t;\epsilon)\|=\mathcal{O}(\epsilon),
    \end{equation}
and
    \begin{align}\label{eq:x_bar_y_g1_g2_BigO}
   \|\x(t;\epsilon)-\zz_2(t;\epsilon)\|=\mathcal{O}(\epsilon),
\end{align}
for $t=\mathcal{O}(1/\epsilon)$.

 
\end{theorem}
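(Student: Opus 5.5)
The plan is to mirror the proof of \cref{thm:first_order_averaging}, with one change: $\g_2$ is only $PWD^{\infty}$ and not Lipschitz, so its contribution is bounded crudely instead of through a Lipschitz estimate. This still gives an $\mathcal{O}(\epsilon)$ bound, because $\g_2$ carries a factor $\epsilon^2$.

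\textbf{Step 1: existence of $\zz_2$.} By \cref{prop.g1w1}, $\g_1$ is L-smooth, hence locally Lipschitz. By \cref{lemma:g2_PWD_t_indep}, $\g_2$ is $PWD^{\infty}$ on a compact neighborhood $D'$ of $D$. Since $\g_2$ is $C^\infty$ on each piece with a continuous extension to the closure, it is measurable and essentially bounded on $D'$, say by $M_{\g_2}$. The right-hand side of \eqref{eq:yExpansion2ndorder_truncated} therefore satisfies Filippov's conditions recalled in \cref{subsec:filippov}. This gives a local absolutely continuous solution $\zz_2$ with $\dot{\zz}_2\in\bm{\mathcal F}[\epsilon\g_1+\tfrac{\epsilon^2}{2}\g_2](\zz_2)$ a.e. Because the Filippov set is contained in the closed convex hull of essential values near $\zz_2$, we get, while $\zz_2\in D'$, that a.e.
\[
\dot{\zz}_2=\epsilon\g_1(\zz_2)+\tfrac{\epsilon^2}{2}\bm{\eta}(t),\qquad \|\bm{\eta}(t)\|\le M_{\g_2}.
\]
Here $\bm{\eta}$ is a measurable selection from the Filippov set. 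This representation is all that is needed from the Filippov framework.

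\textbf{Step 2: bounding $\E=\Bar{\y}-\zz_2$.} Set $\E(t)=\Bar{\y}(t;\epsilon)-\zz_2(t;\epsilon)$, with both functions starting at the same initial point. Treat the $\g_2$ contribution of $\Bar{\y}$ the same way, or bound $\|\g_2(\Bar{\y})\|\le M_{\g_2}$ directly. Integrating \eqref{eq:Bar_y_with_g2} minus the relation above, and applying Minkowski's inequality and the Lipschitz constant $L_{\g_1}$ on $D'$, gives
\[
\|\E(t)\|\le \epsilon L_{\g_1}\int_0^t\|\E(\tau)\|d\tau+\big(\epsilon^2 M_{\g_2}+\epsilon^3M_{\tilde{\g}}\big)t .
\]
The Specific Gronwall Lemma \cite[Lemma 1.3.3]{sanders2007averaging} then yields
\[
\|\E(t)\|\le \epsilon\,\frac{M_{\g_2}+\epsilon M_{\tilde{\g}}}{L_{\g_1}}\big(e^{\epsilon L_{\g_1}t}-1\big)=\mathcal{O}(\epsilon)
\]
for $t\le\kappa/\epsilon$. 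This is \eqref{eq:y_bar_g1_g2_BigO}.

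\textbf{Step 3: the main obstacle, extending $\zz_2$ to the whole interval.} The delicate point is that $\zz_2$ must exist and remain in $D'$ on the whole interval $[0,\kappa/\epsilon]$, since Steps 1 and 2 only use constants valid on $D'$. I would run a continuation argument. Choose $D'$ to be a closed $\delta$-neighborhood of $D$. By \cref{lemma:g_1_g_2_hatg_Loc_Lip.corollary} and the $\y$–$\Bar{\y}$ estimate, $\Bar{\y}$ stays within $\mathcal{O}(\epsilon)$ of $D$. As long as $\zz_2\in D'$, the Gronwall bound holds. For $\epsilon$ small enough this bound is below $\delta/2$, so $\zz_2$ cannot reach $\partial D'$ before time $\kappa/\epsilon$. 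Filippov solutions can be continued while they remain in a compact set where the right-hand side is essentially bounded, so $\zz_2$ extends to all of $[0,\kappa/\epsilon]$.

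\textbf{Step 4: the bound on $\x-\zz_2$.} Observe that \eqref{eq:Bar_y_with_g2} is of the form \eqref{eq:averaged_sys}, with
\[
\bgHat=\tfrac12\g_2+\epsilon\tilde{\g},
\]
which is measurable and essentially bounded. Hence \cref{thm:y_bar_y_bounds} gives $\|\y-\Bar{\y}\|=\mathcal{O}(\epsilon)$. Combining this with \eqref{eq:x_y_bounds} and Step 2 through the triangle inequality,
\[
\|\x-\zz_2\|\le\|\x-\y\|+\|\y-\Bar{\y}\|+\|\Bar{\y}-\zz_2\|=\mathcal{O}(\epsilon)
\]
for $t=\mathcal{O}(1/\epsilon)$, which is \eqref{eq:x_bar_y_g1_g2_BigO}.
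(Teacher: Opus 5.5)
Your proposal is correct and follows the paper's route. The paper likewise gets existence of a Filippov solution from local Lipschitzness of $\g_1$ and essential boundedness of $\g_2$, bounds the $\g_2$ contribution crudely by $\epsilon^2 M_{\g_2}t$ alongside the Lipschitz estimate for $\g_1$, closes with the Specific Gronwall Lemma, and finishes with the triangle inequality through $\y$ and $\Bar{\y}$. Your additions (the measurable selection from the Filippov set, the continuation argument keeping $\zz_2$ in a compact neighborhood of $D$ on $[0,\kappa/\epsilon]$, and the observation that \eqref{eq:Bar_y_with_g2} has the form \eqref{eq:averaged_sys} with $\bgHat=\tfrac12\g_2+\epsilon\tilde{\g}$, so \cref{thm:y_bar_y_bounds} applies) make explicit points the paper leaves implicit.
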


\begin{proof}
 
From \cref{prop.g1w1}, we see that $\g_1$ is L-smooth, and thus locally Lipschitz continuous.
Also, we see from \cref{lemma:g2_PWD_t_indep} that $\g_2$ is $PWD^{\infty}$, and thus essentially bounded and measurable. 
Hence, there exists a Filippov solution $\zz_2$ of \eqref{eq:yExpansion2ndorder_truncated}.

Next, we note that the difference $\|\g_2(\Bar{\y})-\g_2(\zz_2)\|$ is bounded by some constant $M_{\g_2}$, for all $\Bar{\y}$ and $\zz_2$ in $D$.
Also, we have $M_{\tilde{\g}}=\sup \{\tilde{\g}(\y,t,\epsilon):\y\in D,\epsilon\in(0,\epsilon_0],t\in[0,\frac{\kappa}{\epsilon}]\}$. Then we get the following for $\|\E(t)\|=\|\E(t;\epsilon)\| = \|\Bar{\y}(t;\epsilon)-\zz_1(t;\epsilon)\|$:
\begin{align*}
    \|\E(t)\| &\le \int_0^t \epsilon \|\g_1(\Bar{\y}(\tau)) - \g_1(\zz_1(\tau))\|d\tau +  \epsilon^2 M_{\g_2}t+\epsilon^3 M_{\tilde{\g}}t\\
        &\le \int_0^t \epsilon L_{\g_1}\|\E(\tau)\|d\tau + \epsilon^2 M_{\g_2}t+\epsilon^3 M_{\tilde{\g}}t
        = \epsilon^2 M_{\g_2}t+\epsilon^3 M_{\tilde{\g}}t + \epsilon L_{\g_1}\int_0^t \|\E(\tau)\|d\tau,
\end{align*}
and by using the Specific Gronwall Lemma, we get \eqref{eq:y_bar_g1_g2_BigO}, similar to the proof of \eqref{thm:first_order_averaging}.

Lastly, 
    We see that for $t=\mathcal{O}(1/\epsilon)$, we have:
\begin{align*}
    \|\x(t;\epsilon)-\zz_2(t;\epsilon)\|&\le \|\x(t;\epsilon)-\y(t;\epsilon)\|+\|\y(t;\epsilon) -\Bar{\y}(t;\epsilon)\|+\|\Bar{\y}(t;\epsilon)-\zz_2(t;\epsilon)\|
    =\mathcal{O}(\epsilon).
    \end{align*}
\end{proof}

\begin{remark}\label{remark:computations}
 Using the RHS of \eqref{eq:average-canonical form_intro}, one can compute $\g_1(\zz_2)$ as in \eqref{eq.g1}. To compute $\g_2(\zz_2)$ in \eqref{eq.g2}, one needs to compute $\w(\zz_2,t)$ from \eqref{eq.w1}, which is doable using the already calculated $\g_1(\zz_2)$ and the RHS of \eqref{eq:average-canonical form_intro}; and $\f_2(\zz_2,t)$ from \eqref{eq:xExpansion2ndorder}. For elementary nonsmooth functions, L-derivatives can be computed in closed forms (\cref{subsec:LD}). In \cref{subsec:3.3}, we provide a full worked example for the computation of nonsmooth second-order averaging.  
\end{remark}

\begin{remark}\label{remark:g2_calculated_first_order_bounds_stability}
    Similar to the first case, and as mentioned immediately before \cref{thm:asymptotic_implies_practical},
    \cref{thm:second_order_averaging_BigO_bound} provides the first-order bound needed to prove the stability result in \cref{thm:asymptotic_implies_practical}, but now applied to system \eqref{eq:yExpansion2ndorder_truncated} (the truncated second-order averaging system) instead of system \eqref{eq:yExpansion1storder_truncated}.
\end{remark}
\begin{remark}
    If the RHS of \eqref{eq:average-canonical form_intro} is smooth in $\x$, then $\g_1$ and $\g_2$ in \eqref{eq:yExpansion2ndorder_truncated}  automatically become their smooth version from classical averaging \cite[Equations (24) and (25)]{maggia2020higher}; this is due to the fact that L-derivatives recover classical derivatives if the participating functions are smooth, without needing to check as discussed in \cref{subsec:LD}.
\end{remark}
We move to showing that the trajectories generated by the nonsmooth second-order averaged system \eqref{eq:yExpansion2ndorder_truncated} maintain approximation error bounds analogous to their smooth counterparts \cite{sanders2007averaging,maggia2020higher}. 
 That is, we establish an approximation error bound between $\Bar{\y}$, representing the  ``complete'' averaging system (with higher-order terms), 
and $\zz_2$, the second-order averaging system (truncating any higher-order terms).

 To establish the second-order bounds between the complete averaged solution $\Bar{\y}$ and the second-order truncated solution $\zz_2$, we consider the following assumptions:
 \begin{itemize}
     \item[\textbf{(A5)}]  $\g_1(\y)=\frac{1}{T}\int_0^T \f_1(\y,t)dt=0$ for all $\y$.
     \item[\textbf{(A6)}] The set of discontinuities of $\g_2$, denoted $Z_{\g_2}$, can be written as $Z_{\g_2}=\cup_{i=1}^q Z_i,\;q\in\mathbb{N}$, where $Z_i=\{\y:h_i(\y)=0\}$, where each $h_i$ is a $C^1$ function, and for each $i$: $\nabla h_i(\y)\cdot\g_2(\y)\neq 0,\quad \forall \y\in Z_{i}.$
 \end{itemize}
 Condition (A6) is referred to in the literature (see, for example,  \cite{filippov,bernardo2008piecewise}) as the transversality condition or crossing-hypothesis, and this condition requires  piecewise structure of  the vector field $\g_2$ (which follows from Assumption (A4) and \cref{lemma:g2_PWD_t_indep}).
 We also note that condition (A5) is satisfied in many systems (such as the ESC systems discussed in the next section), and simplifies the expression for $\g_2$, as will be seen from \cref{lemma:g1_zero_simplify_g2} below.

\begin{proposition}\label{lemma:g1_zero_simplify_g2}
Suppose that assumptions (A2)-(A5) hold. Then the function $\g_2$ satisfies
\begin{align}\label{eq:g_2_if_g_1_zero}
        \g_2(\y) = \frac{2}{T} \int_0^T \frac{1}{2}\f_2(\y,t) + \JL_{\y} \f_1(\y,t) \cdot \left(\int_0^t\f_1(\y,s) ds \right) dt.
\end{align}
\end{proposition}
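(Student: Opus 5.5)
The plan is to simplify the two L-Lie bracket terms in \eqref{eq.g2} under (A5). I will then show that the $\JL_{\y}\w\,\f_1$ part can be traded for $-\JL_{\y}\f_1\,\w$ by an integration by parts in $t$ over one period.

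First, by (A5) we have $\g_1\equiv\zero$. Hence \eqref{eq.w1} reduces to $\w(\y,t)=\int_0^t\f_1(\y,s)\,ds$, and its L-derivative (with respect to $\y$) vanishes identically: the homogenization sequence of the zero map is zero. Therefore
\[
[\w,\g_1]_{\rm L}(\y,t)=\JL_{\y}\g_1(\y)\,\w(\y,t)-\JL_{\y}\w(\y,t)\,\g_1(\y)=\zero .
\]
Following the sign convention of \cref{def:lexicographicliebracket}, the other bracket is
\[
[\w,\f_1]_{\rm L}(\y,t)=\JL_{\y}\f_1(\y,t)\,\w(\y,t)-\JL_{\y}\w(\y,t)\,\f_1(\y,t).
\]
So it suffices to show that $\int_0^T \JL_{\y}\w\,\f_1\,dt=-\int_0^T\JL_{\y}\f_1\,\w\,dt$. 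Given this, the integrand in \eqref{eq.g2} becomes $\f_2+2\JL_{\y}\f_1\,\w$. Factoring out $2$ then gives exactly \eqref{eq:g_2_if_g_1_zero}, once $\w$ is replaced by $\int_0^t\f_1(\y,s)ds$.

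To obtain this identity, I will pass from L-derivatives to classical partial derivatives almost everywhere. By (A4) and the argument of \cref{prop:LDerivative_PWC_PWD}, with $\M=\I$, we have $\JL_{\y}\f_1(\y,t)=\frac{\partial\f_1}{\partial\y}(\y,t)$ off the measure-zero set $Z_{\f_1}$. Likewise, $\w$ is $PWC^\infty$ and $\JL_{\y}\w=\frac{\partial\w}{\partial\y}$ a.e. By the dominated convergence argument of \cref{lemma:PWSC_implies_PWC}, $\frac{\partial\w}{\partial\y}(\y,t)=\int_0^t\widetilde{\J}_{\y}\f_1(\y,s)\,ds$. Then \cref{lemma:PWSC_implies_PWC.new} shows that $t\mapsto\frac{\partial\w}{\partial\y}(\y,t)$ is absolutely continuous, with derivative $\widetilde{\J}_{\y}\f_1(\y,t)$ a.e. Since $t\mapsto\w(\y,t)$ is $C^1$ in $t$ with derivative $\f_1(\y,t)$, the product $\frac{\partial\w}{\partial\y}(\y,t)\,\w(\y,t)$ is absolutely continuous in $t$. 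It satisfies, for a.e. $t$,
\[
\frac{d}{dt}\Big[\tfrac{\partial\w}{\partial\y}\w\Big]=\tfrac{\partial\f_1}{\partial\y}\,\w+\tfrac{\partial\w}{\partial\y}\,\f_1 .
\]
Integrating over $[0,T]$, the left side gives $\frac{\partial\w}{\partial\y}(\y,T)\w(\y,T)-\frac{\partial\w}{\partial\y}(\y,0)\w(\y,0)=\zero$. This uses $\w(\y,0)=\w(\y,T)=\zero$ from \cref{prop.g1w1}, and it yields the desired identity.

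The main obstacle is the ``a.e.'' bookkeeping. For a fixed $\y$, the slice $\{t:(\y,t)\in Z_{\f_1}\}$ need not have measure zero. I will handle this with Fubini: since $Z_{\f_1}$ has measure zero in $X\times\real$, almost every $\y$ has a null slice, and for those $\y$ the computation above is valid. Since $\g_2$ is only asserted to be $PWD^{\infty}$ (\cref{lemma:g2_PWD_t_indep}), equality for almost every $\y$ suffices; this is the sense in which the formula is used for Filippov solutions. Alternatively, one can note that both sides are $PWD^\infty$ with the same pieces, so they agree on each open piece.
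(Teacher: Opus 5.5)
Your skeleton is the paper's. (A5) kills $\g_1$ and $[\w,\g_1]_{\rm L}$ and leaves $\w(\y,t)=\int_0^t\f_1(\y,s)\,ds$. One integration by parts over a period, with vanishing boundary term because $\w(\y,0)=\w(\y,T)=\zero$, then turns $\int_0^T\JL_{\y}\w\,\f_1\,dt$ into $-\int_0^T\JL_{\y}\f_1\,\w\,dt$. Your bracket orientation is also the one the paper's proof uses.

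The gap is in how you justify that $t\mapsto\JL_{\y}\w(\y,t)$ is absolutely continuous with derivative $\JL_{\y}\f_1(\y,t)$. You route everything through the classical partials $\partial\w/\partial\y$ and $\widetilde{\J}_{\y}\f_1$, so you can only treat those $\y$ whose slice $\{t:(\y,t)\in Z_{\f_1}\}$ is null. What you actually prove is therefore the formula for almost every $\y$; agreement ``on each open piece'' is the same weaker statement. The proposition is a pointwise identity between functions defined through L-derivatives, and the points you discard are exactly the kinks where L-derivatives carry the information.

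For example, take $\f_1(y,t)=|y|\cos t+\sin t$ and $T=2\pi$, which is Law 1 with $J=|\cdot|$. At $y=0$ the slice has full measure, and $\w(y,t)=|y|\sin t+1-\cos t$ is not differentiable in $y$ at $0$ whenever $\sin t\neq 0$, so your argument says nothing there. Yet both sides of \eqref{eq:g_2_if_g_1_zero} equal $-1$ at $y=0$, since $\frac{1}{2\pi}\int_0^{2\pi}\bigl(\cos t\,(1-\cos t)-\sin^2 t\bigr)dt=-1=\frac{1}{\pi}\int_0^{2\pi}\cos t\,(1-\cos t)\,dt$. Arguing that a.e.\ equality suffices because $\g_2$ only enters through Filippov solutions replaces the proposition with a weaker one.

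The missing ingredient is the one the paper invokes from \cite[Theorem 15]{nesterov2005lexicographic}: L-differentiation commutes with the $t$-integral at every point, so $\JL_{\y}\w(\y,t)=\int_0^t\JL_{\y}\f_1(\y,s)\,ds$ for all $\y$ and $t$.

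You can also verify this directly. With $\M=\I$, apply dominated convergence at each stage of the homogenization sequence of $\y\mapsto\int_0^t\f_1(\y,s)\,ds$. Each homogenization map of $\f_1(\cdot,s)$ is Lipschitz with the same constant as $\f_1(\cdot,s)$, which is locally uniform in $s$. This shows that each homogenization map of the integral is the integral of the corresponding maps of $\f_1(\cdot,s)$. Along the way it also shows that $s\mapsto\JL_{\y}\f_1(\y,s)$ is measurable and bounded.

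Then, for every fixed $\y$, $t\mapsto\JL_{\y}\w(\y,t)$ is the indefinite integral of a bounded measurable function. It is therefore absolutely continuous with a.e.\ derivative $\JL_{\y}\f_1(\y,t)$. Your product-rule computation for $\JL_{\y}\w\,\w$ then goes through verbatim at every $\y$, with no Fubini reduction and no detour through classical derivatives.
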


\begin{proof} 
Since $\g_1(\y)=0$ for all $\y$, 
$\w(\y, t) = \int_0^t \f_1(\y, s) ds$.
Recalling the notation  in  \eqref{eq.Lnotation}, 
we note  that  
$
\JL_{\y} \w(\y,t) = \int_0^t  \JL_{\y} \f_1(\y,s) ds,
$
from \cite[Theorem 15]{nesterov2005lexicographic}.
Then we see that we get the following expression for $\g_2(\y)$:
\begin{align*}
\g_2(\y) &= \frac{1}{T}\int_0^T \f_2(\y(t),t) dt + \Gamma (\y),
\end{align*}
where
\begin{align*}
    \Gamma(\y)&= \underbrace{\frac{1}{T}\int_0^T \JL_{\y} \f_1(\y,t)  \left( \int_0^t \f_1(\y, s) ds\right) dt}_{\text{first term}} 
    - \underbrace{\frac{1}{T}\int_0^T \left( \int_0^t  \JL_{\y} \f_1(\y,s) ds \right)  \f_1(\y,t) dt}_{\text{second term}}.
\end{align*}
We simplify the ``second term'' in $\Gamma(\y)$ using integration by parts: let
\begin{align*}
\uu(t)&:=\int_0^t  \JL_{\y} \f_1(\y,s) ds, \quad
\vv(t):=\int_0^t \f_1(\y,s)ds.
\end{align*}
Then $\uu(t)$ and $\vv(t)$ are absolutely continuous; $\vv(t)$ is absolutely continuous because the function $\f_1(\y,t)$ is locally Lipschitz continuous, hence its integral is absolutely continuous.
Let $\widetilde{\J}_{\y}\f_1(\y,t)$ be defined as in \eqref{eq:Jacobian_Like_dropping_zero} (replacing $\f$ and $\x$ in \eqref{eq:Jacobian_Like_dropping_zero} with $\f_1$ and $\y$, respectively), and define $\h(t):= \int_0^t\widetilde{\J}_{\y}\f_1(\y,s)ds$ for some fixed $\y$. Since $\f_1(\y,t)$ is $PWC^{\infty}$ in $\y$, then $\JL_{\y} \f_1(\y,t)=\widetilde{\J}_{\y}\f_1(\y,t)\;\text{a.e.}$ This implies that $\uu(t)=\h(t)$. We see from \cref{lemma:PWSC_implies_PWC.new} that for all $\y,$ $\h(t)$ is absolutely continuous. Hence, $\uu(t)$ is absolutely continuous. Therefore, we may apply integration by parts to yield 
\begin{align*}\frac{1}{T}\int_0^T \left( \int_0^t  \JL_{\y} \f_1(\y,s) ds \right)  \f_1(\y,t) dt
&= \frac{1}{T} \left( \int_0^T  \JL_{\y} \f_1(\y,s) ds \right)  \left( \int_0^T \f_1(\y,\tau) d\tau \right)\\
 &\qquad-  \frac{1}{T}\int_0^T    \JL_{\y} \f_1(\y,t)  \left(\int_0^t\f_1(\y,s) ds \right) dt.
 \end{align*}
Moreover, using $T$-periodicty of $\f_1$ in $t$, we get
$$\frac{1}{T}\int_0^T \left( \int_0^t  \JL_{\y} \f_1(\y,s) ds \right)  \f_1(\y,t) dt=- \frac{1}{T} \int_0^T    \JL_{\y} \f_1(\y,t)  \left(\int_0^t\f_1(\y,s) ds \right) dt.$$
Hence,
\begin{align*}
    \Gamma(\y)&= \frac{2}{T} \int_0^T    \JL_{\y} \f_1(\y,t)  \left(\int_0^t\f_1(\y,s) ds \right) dt,
\end{align*}
and we get
\begin{align*}
    \g_2(\y) = \frac{2}{T} \int_0^T \frac{1}{2}\f_2(\y,t) + \JL_{\y} \f_1(\y,t)  \left(\int_0^t\f_1(\y,s) ds \right) dt.
\end{align*}
\end{proof}
 
The proof of \cref{thm:2nd_order_bound} follows a similar approach to that in \cite[Lemma 8]{llibre2015averaging}.

\begin{theorem}\label{thm:2nd_order_bound}
 Suppose that assumptions (A2)-(A6) hold.  Let $\x(t)=\x(t;\epsilon)$ be the solution of \eqref{eq:average-canonical form_intro} for $\epsilon \in [0,\epsilon_0]$, for some $\epsilon_0>0$, such that $\x(t;\epsilon) \in D$ for some compact set $D \subset X \subseteq \real^n$. Let $\Bar{\y}(t)=\Bar{\y}(t;\epsilon)$ be a solution of system \eqref{eq:Bar_y_with_g2}. Then the solution $\zz_2(t;\epsilon)$ of the system in \eqref{eq:yExpansion2ndorder_truncated}
satisfies
\begin{equation}\label{eq:second_order_bound}
       \|\Bar{\y}(t;\epsilon)-\zz_2(t;\epsilon)\|=\mathcal{O}(\epsilon^2),
    \end{equation}
for $t=\mathcal{O}(1/\epsilon)$.
 \end{theorem}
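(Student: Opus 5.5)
Under (A5) we have $\g_1\equiv\zero$. The full averaged system \eqref{eq:Bar_y_with_g2} therefore reduces to $\dot{\Bar{\y}}=\frac{\epsilon^2}{2}\g_2(\Bar{\y})+\epsilon^3\tilde{\g}(\Bar{\y},t,\epsilon)$, and the truncated system \eqref{eq:yExpansion2ndorder_truncated} to $\dot{\zz}_2=\frac{\epsilon^2}{2}\g_2(\zz_2)$. We take the same initial condition for both. The plan follows \cite[Lemma 8]{llibre2015averaging}: handle the discontinuous $\g_2$ through a one-sided Lipschitz (contraction-type) estimate that is valid under the transversality assumption (A6), and avoid any Lipschitz constant across the switching surfaces. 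Set $\E(t)=\Bar{\y}(t;\epsilon)-\zz_2(t;\epsilon)$ and pass to the slow time $\tau=\epsilon^2 t/2$, so that the horizon $t=\mathcal{O}(1/\epsilon)$ becomes $\tau=\mathcal{O}(\epsilon)$.

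First, by \cref{lemma:g2_PWD_t_indep}, $\g_2$ is $PWD^{\infty}$ on $D$. Hence on each piece $X_k$ it is smooth with a continuous extension to $\mathrm{cl}(X_k)$, which gives a uniform bound $M_{\g_2}$ and a uniform Lipschitz constant $L_k$ within each piece. We also have $\|\tilde{\g}\|\le M_{\tilde{\g}}$ on $D$. The first step is the crude estimate: integrating both equations gives $\|\E(t)\|\le \epsilon^2 M_{\g_2} t+\epsilon^3M_{\tilde{\g}}t$, which is $\mathcal{O}(\epsilon)$ on $t=\mathcal{O}(1/\epsilon)$. This is consistent with \cref{thm:second_order_averaging_BigO_bound}, but it does not give the claimed order.

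The second step uses the crossing hypothesis. By (A6), each trajectory of $\dot{\zz}_2=\frac{\epsilon^2}{2}\g_2(\zz_2)$ (and of $\Bar{\y}$, whose vector field differs by $\mathcal{O}(\epsilon)$ in slow time) crosses each surface $Z_i$ transversally. The reason is that $\nabla h_i\cdot\g_2\neq0$ on $Z_i$ and, by compactness, $|\nabla h_i\cdot\g_2|\ge c>0$ in a neighborhood of $Z_i\cap D$. A solution therefore spends slow time at most $\mathcal{O}(\delta)$ in a $\delta$-neighborhood of each $Z_i$ per crossing. Over a slow horizon of length $\mathcal{O}(\epsilon)$ the number of crossings is bounded, and in fact at most one per surface for $\epsilon$ small, since $c$ bounds the rate of change of $h_i$ along trajectories. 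I would then partition $[0,t]$ into subintervals. On the first kind, $\Bar{\y}$ and $\zz_2$ lie in the same piece $X_k$, and Gronwall with constant $L_k$ applies. On the second kind, the two trajectories lie on opposite sides of some $Z_i$. Because both cross transversally with velocities bounded below in the normal direction, the slow-time length of each such interval is $\mathcal{O}(\|\E\|/c)$. On these intervals I bound $\|\g_2(\Bar{\y})-\g_2(\zz_2)\|\le 2M_{\g_2}$, so each contributes at most $\mathcal{O}(\|\E\|)$ times a constant multiplied by the slow-time length. Combining the two kinds in slow time gives
\[
\|\E(\tau)\|\le \epsilon^2 C\tau+\int_0^\tau \bar{L}\,\|\E(s)\|\,ds+ C'\sup_{s\le\tau}\|\E(s)\|\cdot\mathcal{O}(\tau).
\]
Here the $\epsilon^3$ term has become $\mathcal{O}(\epsilon)$ per unit slow time, which integrates to $\epsilon\tau$. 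Since $\tau=\mathcal{O}(\epsilon)$, Gronwall then yields $\|\E\|\le C''\epsilon\tau=\mathcal{O}(\epsilon^2)$, which is \eqref{eq:second_order_bound}.

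The main obstacle is the crossing-interval estimate. I must show that the time during which $\Bar{\y}$ and $\zz_2$ lie in different pieces is controlled linearly by their separation, and that no sliding occurs; sliding is excluded precisely by (A6). I would first try to prove this via the scalar functions $h_i(\Bar{\y}(t))$ and $h_i(\zz_2(t))$, both strictly monotone near $Z_i$ with slopes of order at least $\epsilon^2 c/2$. The difference of their zero-crossing times is then at most $\|\nabla h_i\|\,\|\E\|/(\epsilon^2c/2)$ in $t$, which is $\mathcal{O}(\|\E\|)$ in slow time. A secondary technical point is that the solutions are only Filippov solutions. Transversality ensures that they are Carath\'eodory solutions, unique and piecewise classical, so the integral representations used above are legitimate.
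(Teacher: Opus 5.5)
Your decomposition matches the paper's: same-piece times are handled by a piecewise Lipschitz/Gronwall bound, and different-piece times are controlled through transversality and monotonicity of $h_i$ along trajectories. Working directly with piecewise Lipschitz constants of $\g_2$ is simpler than the paper's bounds (P1)--(P8), and your slow-time scaling of the crossing time is correct. The gap is in the closing inequality. Let $\tau_k$ be the slow time at which the leading trajectory hits $Z_i$, and let $L_h$ be a Lipschitz constant of the $h_i$ on $D$. The crossing interval then has slow length $\ell_k\le L_h\|\E(\tau_k)\|/c$, and on it the integrand is only bounded by $2M_{\g_2}$. So the interval contributes up to $K\|\E(\tau_k)\|$ with $K\approx 2M_{\g_2}L_h/c$, a constant independent of both $\epsilon$ and $\tau$. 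Nothing produces the factor $\mathcal{O}(\tau)$ in your display; you yourself note the number of crossings is bounded, not $\mathcal{O}(\tau)$. What you can actually prove is $\|\E(\tau)\|\le C\epsilon\tau+\bar L\int_0^\tau\|\E(s)\|ds+K\sup_{s\le\tau}\|\E(s)\|$. Since $K$ need not be below $1$, the $\sup$ term cannot be absorbed and Gronwall does not close. This is intrinsic, not bookkeeping. Take $n=1$ with $\g_2=a$ on $\{y<0\}$ and $\g_2=b$ on $\{y>0\}$, $0<a<b$. A lead $\delta$ of $\Bar{\y}$ at the moment it reaches $0$ becomes $b\delta/a$ once $\zz_2$ has crossed, so a single crossing multiplies the error by $b/a$. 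Separately, in slow time the forcing is $2\epsilon\tilde{\g}$, which gives $\mathcal{O}(\epsilon\tau)$, not the $\epsilon^2 C\tau$ written in your display.

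The missing idea is to treat each crossing multiplicatively rather than additively. Order the times at which the leading trajectory hits a switching surface. By your one-crossing-per-surface observation there are at most $2q$ of them on the $\mathcal{O}(\epsilon)$ slow horizon. Between consecutive crossings, Gronwall inside one piece gives $\|\E(\tau'')\|\le e^{L(\tau''-\tau')}\bigl(\|\E(\tau')\|+2\epsilon M_{\tilde{\g}}(\tau''-\tau')\bigr)$. Across the $k$-th crossing, $\|\E(\tau_k+\ell_k)\|\le(1+K')\|\E(\tau_k)\|$ with $K'=(2M_{\g_2}+2\epsilon M_{\tilde{\g}})L_h/c$. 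Chaining from $\E(0)=\zero$ yields $\|\E(\tau)\|\le(1+K')^{2q}e^{L\tau}\,2\epsilon M_{\tilde{\g}}\tau=\mathcal{O}(\epsilon^2)$ for $\tau=\mathcal{O}(\epsilon)$. Do not fall back on the paper's additive route to get the smallness either. It asserts $\mu(B^{i,j}_{\Bar{\y},\zz_2}(t))=\mathcal{O}(\epsilon)$ by dividing the $\mathcal{O}(\epsilon)$ first-order error by $\lambda=\frac{dH}{dt}$, treating $\lambda$ as $\epsilon$-independent. Under (A5), however, $\lambda=\frac{\epsilon^2}{2}\nabla h_k\cdot\g_2=\mathcal{O}(\epsilon^2)$, which is exactly the scaling you identified, so that step does not supply the smallness.
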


    \begin{proof}
    Define the following sets for  $i,j \in \{1,\ldots,m\}$:
    \begin{align*}
    A^i_{\Bar{\y},\zz_2}(t)&:=\{\tau\in[0,t]:(\tau,\Bar{\y}(\tau)),(\tau,\zz_2(\tau))\in X_i\},\\
    B^{i,j}_{\Bar{\y},\zz_2}(t)&:=\{\tau\in[0,t]:(\tau,\Bar{\y}(\tau))\in X_i,(\tau,\zz_2(\tau))\in X_j, i\neq j\},\\
    C_{\Bar{\y}}(t)&:=\{\tau\in[0,t]: (\tau,\Bar{\y}(\tau))\in Z_{\g_2}\},\quad
    D_{\zz_2}(t):=\{\tau\in[0,t]: (\tau,\zz_2(\tau))\in Z_{\g_2}\},
    \end{align*}
     where $X_i,\; i \in \{1,\ldots,M\}$ are the open sets from the $PWC^{\infty}$ decomposition of $\g_2$ (i.e., $\g_2$ is $C^{\infty}$ on each $X_i$).
     From the crossing hypothesis (A6), we see that the measures of both $C_{\Bar{\y}}(t)$ and $D_{\zz_2}(t)$, denoted by  $\mu(C_{\Bar{\y}}(t))$ and $\mu(D_{\zz_2}(t))$, respectively, are zero. Let $\|\E(t)\|=\|\E(t;\epsilon)\|=\|\Bar{\y}(t;\epsilon)-\zz_2(t;\epsilon)\|$.
    First, we prove that
    for some constants $\alpha_1,\alpha_2,\alpha_3$, we have 
    \begin{equation}\label{eq:claim1}
    \begin{split}
        \|\E(t)\|&\le\int_0^t\frac{\epsilon^2}{2}\alpha_1\|\E(\tau)\|d\tau + \sum_{i=1}^M\int_0^t\frac{\epsilon^2}{2} \alpha_2\|\E(\tau)\|d\tau +\sum_{i=1}^M\sum_{j=1}^M\frac{\epsilon^2}{2}  \alpha_3\mu(B^{i,j}_{\Bar{\y},\zz_2}(t)).
    \end{split}
    \end{equation}

    Observe that
    \begin{align*}
         &\|\E(t)\|
         =\Big|\Big|\int_0^t \epsilon \Bar{\f}_1(\Bar{\y}(\tau)) - \Bar{\f}_1(\zz_2(\tau)) + \frac{\epsilon^2}{2}\g_2(\Bar{\y}(\tau))-\g_2(\zz_2(\tau)) + \epsilon^3  \hat{\g}(\Bar{\y}(\tau),\tau,\epsilon)d\tau\Big|\Big|\\
         &\quad \le \underbrace{\int_0^t \epsilon \|\Bar{\f}_1(\Bar{\y}(\tau)) - \Bar{\f}_1(\zz_2(\tau))\|d\tau}_{I_1}+ \underbrace{\int_0^t\frac{\epsilon^2}{2}\|\g_2(\Bar{\y}(\tau))-\g_2(\zz_2(\tau))\|d\tau}_{I_2}+ \underbrace{\epsilon^3 \int_0^t \hat{\g}(\Bar{\y}(\tau),\tau,\epsilon)d\tau}_{I_3},
    \end{align*}
    using Minkowski's integral inequality. Now we proceed by calculating $I_1,I_2$, and $I_3$. For $I_1$, we note that since $\f_{1}(\Bar{\y},t)$ is Lipchitz continuous with respect to $\Bar{\y}$ on $D$, then $\Bar{\f}_1(\Bar{\y})$ is Lipchitz continuous with respect to $\Bar{\y}$ on  $D$ with some Lipchitz constant $L_{\Bar{\f}_1}$, and we have 
    \begin{align*}
        I_1(t) &= \int_0^t \epsilon \|\Bar{\f}_1(\Bar{\y}(\tau)) - \Bar{\f}_1(\zz_2(\tau))\|d\tau \le   \epsilon\int_0^t L_{\Bar{\f}_1}\|\E(\tau)\|d\tau. 
    \end{align*}
    For $I_3$, and recalling that $\hat{\g}$ is bounded on a compact set $D$, we see that
    \begin{align*}
        I_3(t) &= \epsilon^3 \int_0^t \hat{\g}(\Bar{\y}(\tau),\tau,\epsilon)d\tau
        \le \epsilon^3 M_{\hat{\g}} t,
    \end{align*}
    where $M_{\hat{\g}}$ is a constant. For $I_2$, from the properties of $\f$, we note that:
    \begin{enumerate}
        \item[(P1)] The function $\f_1$ is bounded and integrable on $D$ because $\f_1$ is continuous. 
        \item[(P2)]  The function $\JL \f_1=\JL \f_1(\cdot,t;\M,\mathbf{0})$ is bounded above on $D$ because  $\JL \f_1$ is $PWD^{\infty}$ (from \cref{prop:LDerivative_PWC_PWD}).
         
        \item [(P3)]   Let $J_{\Bar{\y}}=\JL_{\Bar{\y}} \f_1(\Bar{\y},t)$ and  $I_{\Bar{\y}}=\int_0^t\f_1(\Bar{\y},s) ds$. We see that $I_{\Bar{\y}}$ is bounded for all $\Bar{\y}\in D$ and all $\tau\in[0,t]$ (since $\f_1$ is bounded), i.e.,  $\|I_{\Bar{\y}}\|\le M_{I},\:\forall \Bar{\y}\in D, \tau\in[0,t]$. 
        Similarly, $\|J_{\Bar{\y}}\|\le M_{J},\:\forall \Bar{\y}\in D, \tau\in[0,t]$. 
        \item[(P4)]  We see that for all $\Bar{\y},\zz_2\in D,t\in[0,t]$, $\|I_{\Bar{\y}} J_{\Bar{\y}}-I_{\zz_2} J_{\zz_2} \|=\|I_{\Bar{\y}} J_{\Bar{\y}}-I_{\Bar{\y}}J_{\zz_2}-I_{\zz_2} J_{\zz_2}+I_{\Bar{\y}} J_{\zz_2}\|\le \|I_{\Bar{\y}}\| \|J_{\Bar{\y}}-J_{\zz_2}\|+\|J_{\zz_2}\|\|I_{\Bar{\y}}-I_{\zz_2} \| \le M_{I} \|J_{\Bar{\y}}-J_{\zz_2}\| + M_{J} \|I_{\Bar{\y}}-I_{\zz_2} \|$. 
        \item[(P5)] For any two vectors $\Bar{\y}$ and $\zz_2$ in $D$, the difference $\|\JL_{\Bar{\y}} \f_1(\Bar{\y},t)-\JL _{\zz_2}\f_1(\zz_2,t)\|, \:\forall \tau\in[0,t]$ is bounded above by some constant $M_{\JL \f_1}$.
        \item[(P6)]  We note that the function $I_{\Bar{\y}}$ is Lipschitz continuous with respect to $\Bar{\y}$ on $D$, with Lipschitz constant $L_{I \f_1}$. 
        {This follows from the Lipschitz continuity of $\f_1$ with respect to $\Bar{\y}$.}
        \item[(P7)] Also, the function $J_{\Bar{\y}}$ is Lipschitz continuous with respect to $\Bar{\y}$ on $X_i\cap D$ , for all $i=\{1,\dots,M\}$, with Lipchitz constants $L_{\JL \f_{1,i}}$.
        \item [(P8)] The function $\f_2(\Bar{\y},t)$ is Lipschitz continuous with respect to $\Bar{\y}$ on $D$, with  Lipschitz constant $L_{\f_2}$ (because $\f$ is L-smooth with respect to $\Bar{\y}$).
    \end{enumerate}
  \allowdisplaybreaks
    Hence, we have
    \begin{align*}
        I_2(t) &= \int_0^t\frac{\epsilon^2}{2}\|\g_2(\Bar{\y}(\tau))-\g_2(\zz_2(\tau))\|d\tau \\
        &= \int_0^t\frac{\epsilon^2}{2}\Big|\Big|\frac{2}{T} \int_0^T \frac{1}{2}\f_2(\Bar{\y}(\tau),r) + \JL_{\Bar{\y}} \f_1(\Bar{\y}(\tau),r)\cdot \int_0^r\f_1(\Bar{\y}(\tau),s) ds  dr
        \\
        &\qquad\qquad\quad-\frac{2}{T} \int_0^T \frac{1}{2}\f_2(\zz_2(\tau),r)+ \JL_{\zz_2} \f_1(\zz_2(\tau),r) \cdot \left(\int_0^r\f_1(\zz_2(\tau),s) ds \right) dr\Big|\Big|d\tau
        \\
        &\le \int_0^t\frac{\epsilon^2}{2}\frac{1}{T} \int_0^T \| \f_2(\Bar{\y}(\tau),r)- \f_2(\zz_2(\tau),r)\|drd\tau
        \\
        &\qquad+\int_0^t\frac{\epsilon^2}{2}\frac{1}{T} \int_0^T \Big|\Big|\JL_{\Bar{\y}} \f_1(\Bar{\y}(\tau),r) \cdot \left(\int_0^r\f_1(\Bar{\y}(\tau),s) ds \right) - \JL_{\zz_2} \f_1(\zz_2(\tau),r) \cdot \left(\int_0^r\f_1(\zz_2(\tau),s) ds \right)\Big|\Big| drd\tau\\
        &\le\int_0^t\frac{\epsilon^2}{2}\frac{1}{T} \int_0^T L_{\f_2}\|\E(\tau)\|drd\tau +\int_0^t\frac{\epsilon^2}{2}\frac{1}{T} \int_0^T \Big[ M_I\| \JL_{\Bar{\y}} \f_1(\Bar{\y}(\tau),r)
         - \JL_{\zz_2} \f_1(\zz_2(\tau),r)\|\qquad\text{(from (P4))}\\
         &\qquad\qquad\qquad\qquad\quad+  M_{J}\Big|\Big|\int_0^r\f_1(\Bar{\y}(\tau),s) ds -\int_0^r\f_1(\zz_2(\tau),s) ds \Big|\Big| \Big] drd\tau\\
        &\le\int_0^t\frac{\epsilon^2}{2}\frac{1}{T} \int_0^T L_{\f_2}\|\E(\tau)\|drd\tau+\int_0^t\frac{\epsilon^2}{2}\frac{1}{T} \int_0^T \Big[ M_I\| \JL_{\Bar{\y}} \f_1(\Bar{\y}(\tau),r)
         - \JL_{\zz_2} \f_1(\zz_2(\tau),r)\|\\
         &\qquad\qquad+  \int_0^t\frac{\epsilon^2}{2}\frac{1}{T} \int_0^T M_{J}L_{I\f_1}\|\E(\tau)\| drd\tau\quad\text{(from (P6))}\\
         &=\int_0^t\frac{\epsilon^2}{2}(L_{\f_2}+M_{J}L_{I\f_1})\|\E(\tau)\|d\tau+\frac{\epsilon^2}{2}\frac{M_{I}}{T}\Big[\sum_{i=1}^M\int_{A^i_{\Bar{\y},\zz_2}(t)}\int_0^T  L_{\JL \f_1,i}\|\E(\tau)\| dr d\tau\quad\text{(from (P7))}\\
         &\qquad\qquad\quad+\sum_{i=1}^M\sum_{j=1}^M\int_{B^{i,j}_{\Bar{\y},\zz_2}(t)} \int_0^T \|\JL_{\Bar{\y}} \f_1(\Bar{\y}(\tau),r)- \JL_{\zz_2} \f_1(\zz_2(\tau),r)\|   dr d\tau\\
         &\qquad\qquad\quad+\int_{C_{\Bar{\y}}(t)} \int_0^T2 M_{J} dr d\tau
         +\int_{D_{\Bar{\y}}(t)} \int_0^T 2 M_{J} dr d\tau\Big]\quad\text{(from (P3))}\\
          &=\frac{\epsilon^2}{2}\Bigg[\int_0^t(L_{\f_2}+M_{J}L_{I\f_1})\|\E(\tau)\|d\tau +M_{I}\Big[\sum_{i=1}^M\int_{A^i_{\Bar{\y},\zz_2}(t)}\frac{\epsilon^2}{2}L_{\JL \f_1,i} \|\E(\tau)\|d\tau \\
         &\;+\sum_{i=1}^M\sum_{j=1}^M \int_{B^{i,j}_{\Bar{\y},\zz_2}(t)}\frac{1}{T}\int_0^T \|\JL_{\Bar{\y}} \f_1(\Bar{\y}(\tau),r)- \JL_{\zz_2} \f_1(\zz_2(\tau),r)\|   drd\tau+2 M_{J} \underbrace{\mu(C_{\Bar{\y}}(t))}_{=0}+ 2M_{J} \underbrace{\mu(D_{\Bar{\y}}(t))}_{=0}\Big]\Bigg]\\
         &\le\frac{\epsilon^2}{2}\Big[\int_0^t(L_{\f_2}+M_{J}L_{I\f_1})\|\E(\tau)\|d\tau
         + \sum_{i=1}^M\int_0^t M_{I}L_{\JL \f_1,i}\|\E(\tau)\|d\tau +\sum_{i=1}^M\sum_{j=1}^M  M_{\JL \f_1}M_{I}\mu(B^{i,j}_{\Bar{\y},\zz_2}(t))\Big].
        \end{align*}
        \normalsize
        Now, we want to show that $\mu(B^{i,j}_{\Bar{\y},\zz_2}(t))=\mathcal{O}(\epsilon)$.
        Suppose there exists a connected subset $[t_1,t_2]\subseteq B_{\Bar{\y},\zz_2}(t)$, i.e., for all $t\in [t_1,t_2]$, we have $\Bar{\y}(t)\in X_i$, $\zz_2(t)\in X_j$, $i\neq j$, and for some $\epsilon>0$, we have $\zz_2(t_1-\epsilon)\in X_i$ and  $\Bar{\y}(t_2+\epsilon)\in X_j$. 
        
        {To show $\mu(B^{i,j}_{\Bar{\y},\zz_2}(t))=\mathcal{O}(\epsilon)$, we aim to show $\|t_1-t_2\|<C_1\epsilon$ for some constant $C_1>0$.}
        Let $t^*\in[t_1,t_2]$ such that $\Bar{\y}^*:=\zz_2(t^*)\in Z_k$, for some $k\in\mathbb{N}$ (note that $Z_k\subseteq Z_{\g_2}$ from Assumption (A6)). The crossing hypothesis implies that
        \begin{align}\label{eq:transv_cond}
           \nabla h_k(\Bar{\y}^*)\cdot \g_2(\Bar{\y}^*) \neq 0.
        \end{align}
        Consider $t_a<t_1$ such that  $\zz_2(t_a)\in X_i$ and $\zz_2(t_2)\in X_j$, with $i\neq j$.  It can be seen from the continuity of $h_k$, the assumption $h_k(\Bar{\y}^*)=0$ if and only if $\Bar{\y}^*\in Z_k$, and equation \eqref{eq:transv_cond}, that the value of $h_k$ on $X_i$ has a different sign from the value of $h_k$ on $X_j$. Otherwise, there exists a $\tilde{\Bar{\y}}\notin Z_k$ such that $h_k(\tilde{\Bar{\y}})=0$, contrary to our assumption.  In other words, $h_k$ is strictly monotone on $[t_a,t_2]$, and $\frac{dh_k}{dt}(\zz_2(t))\neq 0$ for all $t\in [t_a,t_2]$. Define $H(t):=h_k(\zz_2(t))$.  {Then  from the Mean Value Theorem there exists $\tilde{t}\in [t_a,t_2]$ such that }
        \begin{align*}
             H(t_2)- H(t_a) = \lambda \|t_2-t_a\| 
        \end{align*}

        where $\lambda=\frac{dH}{dt}(\tilde{t})\neq 0$. That is 
        $$\|t_2-t_1\|\le\|t_2-t_a\|\le\frac{\|h_k(\zz_2(t_2))- h_k(\zz_2(t_a))\|}{\lambda}.$$ 
        Hence, we established the inequality relation between $\|t_2-t_1\|$ and the difference $h_k(\zz_{21})-h_k(\zz_{22})$, for any $\zz_{21}\in X_i$ and $\zz_{22}\in X_j$. We also note that the function $h_k$ is Lipschitz continuous on $D$, hence we have
        \begin{align*}
            \|h_k(\Bar{\y}(t))-h_k(\zz_2(t))\|\le L_{h_k} \|\Bar{\y}(t)-\zz_2(t)\|\le L_{h_k} C\epsilon.
        \end{align*}
         Since $\Bar{\y}(t)\in X_i$, and $\zz_2(t)\in X_j$, we see that 
         \begin{align*}
            \|t_2-t_1\|\le \frac{\|h_k(\Bar{\y}(t))-h_k(\zz_2(t))\|}{\lambda}\le\frac{L_{h_k}C\epsilon}{\lambda}=C_1\epsilon,
         \end{align*}
         where $C_1=\frac{L_{h_k}C}{\lambda}$
     Since $[t_1,t_2]$ is chosen arbitrarily from $B^{i,j}_{\Bar{\y},\zz_2}(t)$, then $\mu(B^{i,j}_{\Bar{\y},\zz_2}(t))=\mathcal{O}(\epsilon)$, as desired.
     
    Next, let us 
    consider a constant $M_B$ such that $ \sum_{i=1}^M\sum_{j=1}^MM_{\JL \f_1}M_{I}<M_B$, then we have from \eqref{eq:claim1} that:
    \begin{align*}
        \|\E(t)\|&\le\int_0^t \epsilon L_{\Bar{\f}_1}\|\E(\tau)\|d\tau 
        +\int_0^t\frac{\epsilon^2}{2}(L_{\f_2}+M_{J}L_{I\f_1})\|\E(\tau)\|d\tau +\sum_{i=1}^M\int_0^t\frac{\epsilon^2}{2}M_{I}L_{\JL \f_1,i}\|\E(\tau)\|d\tau\\
        &\qquad+ \epsilon^3 M_{\hat{\g}} t+\sum_{i=1}^M\sum_{j=1}^M\frac{\epsilon^2}{2}  M_{\JL \f_1}M_{\f_1}\mu(B^{i,j}_{\Bar{\y},\zz_2}(t))\\
         &\le \int_0^t \epsilon L_{\Bar{\f}_1}\|\E(\tau)\|d\tau 
        +\int_0^t\frac{\epsilon^2}{2}(L_{\f_2}+M_{J}L_{I\f_1})\|\E(\tau)\|d\tau
        + \sum_{i=1}^M\int_0^t\frac{\epsilon^2}{2}M_{I}L_{\JL \f_1,i}\|\E(\tau)\|d\tau \\
        &\qquad\qquad+ \epsilon^3 M_{\hat{\g}} t +  \epsilon^3 M_B\\
        &\le \epsilon^3 (M_{\hat{\g}}+M_B) t + \epsilon(1+\frac{\epsilon}{2}+\frac{\epsilon}{2}M_{L})L_m \int_0^t\|\E(\tau)\|d\tau,  
    \end{align*}
where $L_m=\max( L_{\Bar{\f}_1},L_{\f_2},\sum_{i=1}^ML_{\JL \f_1,i})$.

        Finally, we get from Gronwall's Lemma that:
    \begin{align*}
        \|\E(t)\|&\le   \frac{\epsilon^3(M_{\hat{\g}}+M_B)}{\epsilon(1 +\frac{\epsilon}{2}+\frac{\epsilon}{2}M_{L}) L_{m}} e^{\epsilon(1+\frac{\epsilon}{2}+\frac{\epsilon}{2}M_{L}) L_m t}- \frac{\epsilon^3 (M_{\hat{\g}}+M_B)}{\epsilon(1+\frac{\epsilon}{2}+\frac{\epsilon}{2}M_{L}) L_m} .
    \end{align*}
    As $\epsilon\rightarrow 0$, then $1+\frac{\epsilon}{2}+\frac{\epsilon}{2}M_{L}\rightarrow 1$. Hence,
    \begin{align*}
         \|\E(t)\|\le \epsilon^2 \frac{(M_{\hat{\g}}+M_B)}{L_m}(e^{\epsilon L_m t}-1).
    \end{align*}
    With $t=\mathcal{O}(1/\epsilon)$, we get $\|\E(t;\epsilon)\|=\|\Bar{\y}(t;\epsilon)-\zz_2(t;\epsilon)\|=\mathcal{O}(\epsilon^2)$.
\end{proof}

\begin{remark}\label{rmk:closeness_of_trajectories}
    As is the case in smooth near-identity transformation (classical) averaging \cite{sanders2007averaging,maggia2020higher}, the approximation error (closeness of trajectories) between the nonsmooth NLTP system \eqref{eq:average-canonical form_intro} and its nonsmooth averaged system \eqref{eq:yExpansion2ndorder_truncated} holds (i.e., \cref{thm:2nd_order_bound} and \cref{eq:x_bar_y_g1_g2_BigO} hold) regardless of stability; this will be demonstrated by example/simulations in \cref{subsec:3.3}.    
\end{remark}

\subsection{Summary of Averaging Systems}\label{subsec:3.25}
In this part, we summarize the averaging theory above --- see \cref{table.averaging}.
 \begin{table}[h!]
   \hspace{-2cm}
    \renewcommand{\arraystretch}{1.5} 
    \begin{tabular}{|m{5cm}|m{3cm}|m{3cm}|m{3cm}|m{3cm}|}
        \hline
        \textbf{System Equations} & \textbf{Equation Label}  & \textbf{Type of System} & \textbf{Type of Solution} & \textbf{Relevant Theorems} \\
        \hline
        \begin{equation*}
             \dot{\x} = \epsilon \f(\x,t,\epsilon)
        \end{equation*} & \eqref{eq:average-canonical form_intro}  & NLPT & Classical & --  \\
        \hline
        \begin{align*}
             \dot{\x} = \epsilon \f_1(\x,t)+\frac{\epsilon^2}{2!}\f_2(\x,t) +\frac{\epsilon^3}{3!}\hat{\f}(\x,t,\epsilon)
        \end{align*} &  \eqref{eq:xExpansion2ndorder}  & Expanded NLTP & Classical  & -- \\
        \hline
        \begin{equation*}
     \dot{\y} = \epsilon \h_1(\y,t)+\epsilon^2 \hat{\h}(\y,t,\epsilon)
 \end{equation*} & \eqref{eq:yExpansion2ndorder} & Expanded NLTP under Near-Identity Transformation & Filippov  & \cref{lemma:g_1_g_2_hatg_Loc_Lip}  \\
        \hline  
        \begin{equation*}
     \dot{\Bar{\y}} = \epsilon \g_1(\Bar{\y}) + \epsilon^2 \bgHat(\Bar{\y}, t, \epsilon)
 \end{equation*} & \eqref{eq:averaged_sys}  & Full Averaging of NLTP & Filippov  &  \cref{thm:y_bar_y_bounds} \\
        \hline  
        \begin{equation*}
    \dot{\zz}_1 = \epsilon \Bar{\f}_1(\zz_1)
 \end{equation*}
 
 & \eqref{eq:yExpansion1storder_truncated}  & First-Order Averaging of NLTP  & Classical &  \cref{thm:first_order_averaging}  \\
        \hline
        \begin{equation*}
   \dot{\Bar{\y}} = \epsilon \g_1(\Bar{\y})+\frac{\epsilon^2}{2}\g_2(\Bar{\y})+ \epsilon^3 \tilde{\g}(\Bar{\y},t,\epsilon)
 \end{equation*}
 & \eqref{eq:Bar_y_with_g2}  & Full Averaging of NLTP  & Filippov&   \cref{thm:2nd_order_bound}  \\
        \hline
        \begin{equation*}
  \dot{\zz}_2  = \epsilon \g_1(\zz_2)+\frac{\epsilon^2}{2}\g_2(\zz_2)
 \end{equation*}
 
 & \eqref{eq:yExpansion2ndorder_truncated}  & Second-Order Averaging of NLTP & Filippov&  \cref{thm:2nd_order_bound}  \\
        \hline
    \end{tabular}
    \caption{Summary of nonsmooth averaging theory.}\label{table.averaging}
\end{table}

 \subsection{Nonsmooth averaging example}\label{subsec:3.3}
In this part, we provide an example of an NLTP system in the averaging canonical form \eqref{eq:average-canonical form_intro} and derive its second-order averaging using LD-derivatives, as described above. In fact, we choose a system that is not stable to verify  \cref{eq:x_bar_y_g1_g2_BigO} and illustrate the observations made in  \cref{rmk:closeness_of_trajectories}. 

Consider the following  system:
\begin{equation}\label{eq:divergent_multiagent}
\begin{split}
\frac{dx_1}{dt}=cJ(\x)\sqrt{\omega}\sin(\omega t)+\alpha\sqrt{\omega}\cos(\omega t),\quad
\frac{dx_2}{dt}=-cJ(\x)\sqrt{\omega}\cos(\omega t)+\alpha\sqrt{\omega}\sin(\omega t),
\end{split}
\end{equation}
     with state variables $\x=(x_1,x_2)$,  $J(\x)=|x_1|+|x_2|$, and and constants $c$ and $\alpha$. First, in order to apply \cref{thm:second_order_averaging_BigO_bound} to \eqref{eq:divergent_multiagent}, the system \eqref{eq:divergent_multiagent} is required to be in the averaging-canonical form as in \eqref{eq:average-canonical form_intro}. Accordingly, let $\tau = \omega t$ and $\epsilon = \frac{1}{\sqrt{\omega}}$,  from which it follows that 
     $$\frac{d x_1}{d\tau} = \frac{1}{\omega}\frac{dx_1}{dt}, \quad \frac{d x_2}{d\tau} = \frac{1}{\omega} \frac{dx_2}{dt}.$$ 
     Hence, we get the NLTP system:      \begin{equation}\label{eq:divergent_multiagent.NLTP}
    \begin{aligned}
         \frac{d x_1}{d\tau}&=\epsilon[cJ(\x)\sin(\tau)+\alpha\cos(\tau)],\quad
        \frac{d x_2}{d\tau}=\epsilon[-cJ(\x)\cos(\tau)+\alpha\sin(\tau)],
     \end{aligned}
     \end{equation}  
which is in the averaging-canonical form $\dot{\x}=\epsilon \f(\x,t,\epsilon)$ in \eqref{eq:average-canonical form_intro}. Now, we follow \cref{remark:computations} and build the averaging systems:  first,
\begin{align*}
    \g_1(\y) &=\bar{\f}_1(\y)=  \frac{1}{2\pi}\int_0^{2\pi} \begin{bmatrix}cJ(\y)\sin(s)+\alpha\cos(s)\\-cJ(\y)\cos(s)+\alpha\sin(s)\end{bmatrix}ds = \zero.
\end{align*}
Next, to construct the second-order term $\g_2(\y)$,  
observe that 
\begin{equation}\label{eq:w1}
\begin{split}
    \w(\y,\tau)  
    = \int_{0}^{\tau} \left(\f_1(\y,s) - \bar{\f}_1(\y\right) ds 
    &=\int_0^{\tau} \begin{bmatrix}cJ(\y)\sin+\alpha\cos(s)\\-cJ(\y)\cos(s)+\alpha\sin(s)\end{bmatrix}ds=\begin{bmatrix}-cJ(\y)\cos(\tau)+cJ(\y)+\alpha\sin(\tau)\\-cJ(\y)\sin(\tau)-\alpha\cos(\tau)+\alpha\end{bmatrix}.
\end{split}
\end{equation}
Moreover, 
\begin{align*}
    [\w,\f_1]_{\rm L}(\y,\tau)&= \JL\f_1(\y,\tau)
    \begin{bmatrix}-cJ(\y)\cos(\tau)+cJ(\y)+\alpha\sin(\tau)\\-cJ(\y)\sin(\tau)-\alpha\cos(\tau)+\alpha\end{bmatrix}- \JL\w(\y,\tau)  \begin{bmatrix}cJ(\y)\sin(\tau)+\alpha\cos(\tau)\\-cJ(\y)\cos(\tau)+\alpha\sin(\tau)\end{bmatrix}.
\end{align*} 
Then, using the sharp calculus rules of LD-derivatives in \eqref{eq:sharp_rules}, we get 
\begin{equation} \label{eq:J_L f_1}
\begin{aligned}
    \JL\f_1(\y,\tau) 
    &=\begin{bmatrix}
        c\sin(\tau)[J]'(\y)\\
        -c\cos(\tau)[J]'(\y)
    \end{bmatrix}
     = \begin{bmatrix}
        c\sin(\tau)\mathrm{fsign}(x_1,1)&c\sin(\tau)\mathrm{fsign}(x_2,1)\\
        -c\cos({\tau})\mathrm{fsign}(x_1,1)&-c\cos({\tau})\mathrm{fsign}(x_2,1)
    \end{bmatrix}.
\end{aligned}
\end{equation}
where, via \eqref{eq.LDderivative.abs}, 
\begin{align*}
 [J]'(\y)&=J'(\y;\II_n)=\nabla_L J(\y)
    = 
    \mathrm{fsign}(x_1,[1\quad 0])[1\quad 0]
    +\mathrm{fsign}(x_2,[0\quad 1])[0\quad 1]\\
    &=[\mathrm{fsign}(x_1,[1\quad 0]) \quad \mathrm{fsign}(x_2,[0\quad 1])]
    =[\mathrm{fsign}(x_1,1) \quad \mathrm{fsign}(x_2,1)].
\end{align*}
Similarly,  
\begin{align}\label{eq:J_L w1}
    \JL\w(\y,\tau) &= [\w]'(\y,\tau)=[\w]'(\y,\tau;(\II_n,\zero))
    = \begin{bmatrix}
        \beta_1&\beta_2\\
        \beta_3&\beta_4
    \end{bmatrix},
\end{align}
where 
\begin{align*}
\beta_1&=-c\cos(\tau)\mathrm{fsign}(x_1,1)+c\mathrm{fsign}(x_1,1),\quad
\beta_2=-c\cos(\tau)\mathrm{fsign}(x_2,1)+c\mathrm{fsign}(x_2,1),\\ 
\beta_3&= -c\sin({\tau})\mathrm{fsign}(x_1,1),\quad
\beta_4=-c\sin({\tau})\mathrm{fsign}(x_2,1).
\end{align*}
Since $\g_1(\y)=\zero$, then it follows that $[\w,\g_1]_{\rm L}(\y,t)=\zero$. 
Also, $\f_2(\x,t)=  2\frac{\partial \f}{\partial \epsilon}(\x,t,0) = \zero$. Hence, from  \eqref{eq:w1}, \eqref{eq:J_L f_1} and \eqref{eq:J_L w1} we get 
\begin{align*}
    \g_2(\y)
    &=\frac{1}{2\pi}\int_{0}^{2\pi} (\f_2(\y,t) + [\w,\f_1]_{\rm L}(\y,t)+[\w,\g_1]_{\rm L}(\y,t))dt\\
     &= \frac{1}{2\pi}\int_0^{2\pi} \begin{bmatrix}
        c\alpha\mathrm{fsign}(x_1,1)-c^2\mathrm{fsign}(x_2;1)J(\y)\\
        c^2\mathrm{fsign}(x_1;1)J(\y)+\alpha c\mathrm{fsign}(x_2,1)
    \end{bmatrix}dt\\
    &\quad - \frac{1}{2\pi}\int_0^{2\pi}\left(\begin{bmatrix}
        c\alpha\mathrm{fsign}(x_1,1)\cos(t)\\0\end{bmatrix} +\begin{bmatrix}c^2\mathrm{fsign}(x_2,1)J(\y)\cos(t)\\
        0
    \end{bmatrix}\right) dt\\
     &= \begin{bmatrix}
        c\alpha\mathrm{fsign}(x_1,1)-c^2\mathrm{fsign}(x_2,1)(|x_1|+|x_2|)\\
        c^2\mathrm{fsign}(x_1,1)(|x_1|+|x_2|)+ c\alpha\mathrm{fsign}(x_2,1)
    \end{bmatrix}.
\end{align*}
Then, we get that the second-order averaging system is given by 
\begin{align*}
    \frac{d\zz}{d\tau} 
    &= \epsilon \g_1 (\zz)+\frac{\epsilon^2}{2!} \g_2 (\zz)
    =  \frac{\epsilon^2}{2}\begin{bmatrix}
        c\alpha\mathrm{fsign}(z_1,1)-c^2\mathrm{fsign}(z_2,1)J(\zz)\\
        c^2\mathrm{fsign}(z_1,1)J(\zz)+ c\alpha\mathrm{fsign}(z_2,1)
    \end{bmatrix}\\
    &= \frac{1}{2\omega}\begin{bmatrix}
            c\alpha &-c^2J(\zz)\\
            c^2J(\zz) & c\alpha
        \end{bmatrix}\begin{bmatrix}
            \mathrm{fsign}(z_1,1)\\
            \mathrm{fsign}(z_2,1)
        \end{bmatrix}
    =\frac{1}{2\omega}\A(\zz)\begin{bmatrix}
            \mathrm{fsign}(z_1,1)\\
            \mathrm{fsign}(z_2,1)
        \end{bmatrix}.
\end{align*}
where
$$\A(\zz)=\begin{bmatrix}
            c\alpha &-c^2J(\zz)\\
            c^2J(\zz) & c\alpha
        \end{bmatrix}.$$
Finally, the second-order averaging system  in the timescale of the original variable $t$ is 
\begin{align}\label{eq:second_order_example}
\begin{split}
    \frac{d\zz}{dt} = \omega \frac{d\zz}{d\tau}&= \frac{1}{2}\A(\zz)\begin{bmatrix}
            \mathrm{fsign}(z_1,1)\\
            \mathrm{fsign}(z_2,1)
        \end{bmatrix}.
\end{split}
\end{align}

\cref{fig:diverge_x1_x2} shows a simulation (codes are available in \cite{github_averaging_ESC}) for the original nonsmooth time-varying system \eqref{eq:divergent_multiagent} and its second-order averaged system \eqref{eq:second_order_example}, which is not stable. However, as predicted by \cref{eq:x_bar_y_g1_g2_BigO}, closeness of the trajectories is maintained between $\x$ (solid blue) and $\zz$ (dashed red) even though both are diverging; that is, $\|\x(t;\epsilon)-\zz(t;\epsilon)\|=\mathcal{O}(\epsilon)$. We also notice how the L-smooth averaged system captures multiple nonsmooth turns throughout the simulation. In our simulation, we have $\x_0=\zz_0=\begin{bsmallmatrix}
   2\\-2 
\end{bsmallmatrix}$, $\omega=10$, $c=0.3$ and $\alpha=1$.

\begin{figure}[htbp]
    \centering
    \hspace*{-0.5cm} 
\includegraphics[width=0.85\textwidth]{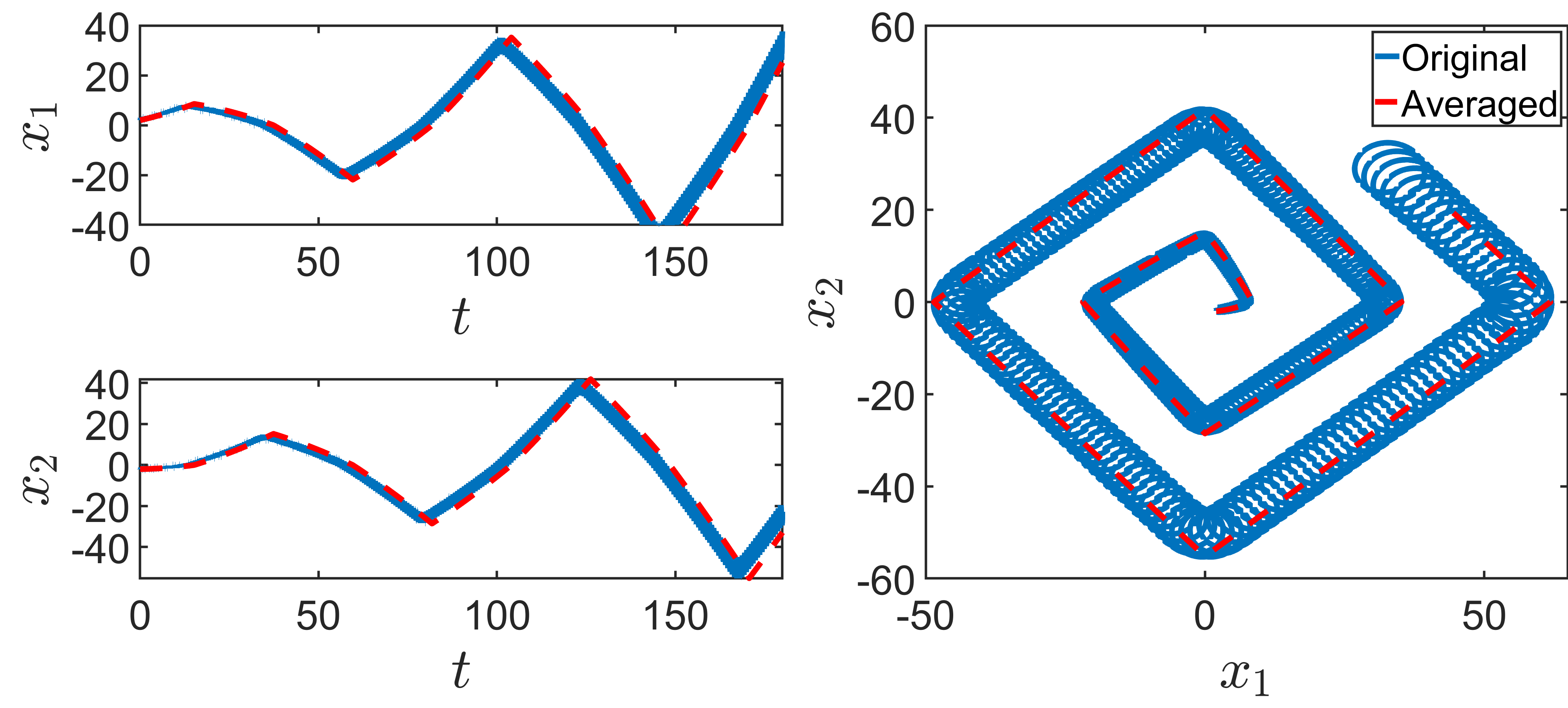}
\caption{Simulation results of  example with initial conditions $\x_0=\zz_0=\begin{bsmallmatrix}
   2\\-2 
\end{bsmallmatrix}$, $\omega=10$, $c=0.3$ and $\alpha=1$. The two left figures show the states $x_1$ and $x_2$ (solid blue) with their respective second-order averages $\zz_{2,1}$ and $\zz_{2,2}$ (dashed red), while the right figure shows the trajectories $x_1$ vs $x_2$ (solid blue) with $\zz_{2,1}$ and $\zz_{2,2}$ (dashed red). 
} 
\label{fig:diverge_x1_x2}
\end{figure}

\section{Nonsmooth control-affine extremum seeking systems}\label{sec:NonsmoothESC}
 
In this part, we use the nonsmooth averaging theory to achieve results in nonsmooth extremum seeking control (ESC) systems.

\subsection{Generalized-gradient-based extremum seeking}\label{subsection:4.1}

Let us consider the control-affine ESC system below:
\begin{align}\label{eq:control_affine}
\frac{d\x}{dt}=\bm{b}_1(\x)u_1(t)+\bm{b}_2(\x)u_2(t),
\end{align}
where $\bm{b}_1(\x)$ and $\bm{b}_2(\x)$ are vector fields that involve an objective function $J(\x)$ whose minimization we seek, and where typically the excitation signals satisfy $u_1(t)=\sqrt{\omega} \;\widetilde{u}_1(\omega t)$, $u_2(t)=\sqrt{\omega} \;\widetilde{u}_2(\omega t)$ with $\widetilde{u}_1$ and $\widetilde{u}_2$ periodic (e.g., $\widetilde{u}_1(\omega t)=\cos(\omega t)$, $\widetilde{u}_2(\omega t)=\sin(\omega t)$). Then, rescaling time according to $\tau=\omega t$ and letting $\epsilon=\frac{1}{\sqrt{\omega}}$,  \eqref{eq:control_affine} becomes
\begin{equation}\label{eq:control_affine_canonical}
\frac{d\x}{d\tau}=\frac{1}{\omega}\frac{d\x}{dt}=\epsilon(\bm{b}_1(\x) \widetilde{u}_1(\tau)+\bm{b}_2(\x) \widetilde{u}_2(\tau))=\epsilon \f(\x,\tau)
\end{equation}
i.e., in averaging-canonical form \eqref{eq:average-canonical form_intro}.
As noted in \cite{durr2013lie,VectorFieldGRUSHKOVSKAYA2018}, with smooth $\bm{b}_1(\x)$, $\bm{b}_2(\x)$ and $J(\x)$ and certain control structures/laws (i.e., choices of $\bm{b}_1(\x)$ and $\bm{b}_2(\x)$), the ESC system \eqref{eq:control_affine} can be approximated using first-order Lie bracket approximations, resulting in a gradient system/flow of the following form 
$$\dot{\zz}=-\A \nabla J(\zz),$$ 
where $\A$ is positive definite. Recently, in \cite{pokhrel2023higher}, it was shown that first-order Lie bracket approximations are equivalent to second-order averaging. Hence, our aim here is to take advantage  of the results of section 3 in order to derive/analyze nonsmooth control-affine ESC systems of the form \eqref{eq:control_affine} via nonsmooth second-order averaging, which results in a generalized-gradient system/flow of the following form: 
$$\dot{\zz} = -\A \nabla_L J(\zz),$$ 
where $\nabla_L J(\zz)$ is the lexicographic gradient as mentioned in \cref{subsec:LD}.

Before establishing nonsmooth ESC results, we require the following assumptions to hold throughout this section: we suppose that (A2)-(A6) hold for the RHS function $\f$ in \eqref{eq:control_affine_canonical} (and therefore on $\bm{b}_i$ and $u_i$). Additionally, we consider the following assumptions on the excitation signals $u_i:\real\to\real$,  vector fields $\bm{b}_i:X \to \real^n$ (with open set $X$), and the behavior of the objective function $J:X\to\real$, which are common and non-restrictive  assumptions made in the ESC literature (e.g., \cite{VectorFieldGRUSHKOVSKAYA2018}):  
    \begin{enumerate}
        \item[\textbf{(B1)}] The excitation signal functions $u_1(t)$ and $u_2(t)$ are $T$-periodic in $t$ and have zero-mean, i.e., $\int_0^Tu_i(t)dt=0$ for $i=1,2$.
        \item[\textbf{(B2)}] The objective function $J(\x)$ has an isolated local minimizer\footnote{That is, $J(\x)> J^*:=J(\x^*)$ for all $\x\in D\backslash\{\x^*\}$.} $\x^*\in D$ in some compact subset $D \subset X$ and there exist $\mathcal{K}$-class functions\footnote{The function $\alpha:[0,\infty)\to[0,\infty)$ is $\mathcal{K}$-class if $\alpha(0)=0$ and $\alpha$ is strictly increasing.}  $\alpha_1$ and $\alpha_2$ such that $\alpha_1(||\x-\x^*||)\le J(\x)-J^*\le \alpha_2(||\x-\x^*||)$ for all $\x\in D$.
    \end{enumerate}

\begin{proposition}\label{prop:ESC_laws}
Suppose that assumptions (A2)-(A4) and (A6) hold. 
Let $u_1(t)=\sqrt{\omega}\cos(\omega t)$ and $u_2(t)=\sqrt{\omega}\sin(\omega t)$.
Then the following ESC law{s}
  \begin{itemize}[leftmargin=1.5cm]
     \item[(Law 1)]  $\dot{x} = J(x)u_1(t)+ u_2(t)$,
     \item[(Law 2)]  $\dot{x} = \sin(J(x))u_1(t)+\cos(J(x))u_2(t)$,
     \item[(Law 3)]  $\dot{x} = \sqrt{\frac{1-e^{-J(x)}}{1+e^{J(x)}}}(\sin(J(x)+2\ln(e^{J(x)}-1))u_1(t)+\cos(J(x)+2\ln(e^{J(x)}-1))u_2(t)$,
 \end{itemize} 
 with $J(x)\neq 0$ in Law 3, have nonsmooth second-order averaging system 
     \begin{align}\label{eq:Lyap_proof}
        \dot{\zz} = -\A \nabla_L J(\zz),
    \end{align}
    for some positive definite matrix $\A$.
 \end{proposition}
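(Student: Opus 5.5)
The plan is to put each law into averaging-canonical form, check that (A5) holds, and then evaluate $\g_2$ with the simplified formula of \cref{lemma:g1_zero_simplify_g2}. The LD-derivative chain rule \eqref{eq:sharp_rules} turns every L-Jacobian that appears into (scalar factor)$\times\nabla_{\rm L}J(\y)^{\rm T}$. After averaging over one period, everything should collapse to $-\A\nabla_{\rm L}J$.

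\textbf{Canonical form.} Each law has the control-affine form \eqref{eq:control_affine}, with $\bm b_1=\phi_1(J(x))$ and $\bm b_2=\phi_2(J(x))$ for explicit $C^\infty$ scalar functions $\phi_1,\phi_2$:
\begin{itemize}
\item Law 1: $\phi_1(s)=s$, $\phi_2(s)=1$;
\item Law 2: $\phi_1(s)=\sin s$, $\phi_2(s)=\cos s$;
\item Law 3: the given expressions, which are smooth where $J\neq0$.
\end{itemize}
Setting $\tau=\omega t$ and $\epsilon=1/\sqrt\omega$ as in \eqref{eq:control_affine_canonical}, we get
\[
\f_1(\y,\tau)=\phi_1(J(\y))\cos\tau+\phi_2(J(\y))\sin\tau ,
\]
with $\f$ independent of $\epsilon$, so $\f_2\equiv\zero$. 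Both excitation signals have zero mean, so $\g_1=\zero$ and (A5) holds. \cref{lemma:g1_zero_simplify_g2} then applies and gives
\[
\g_2(\y)=\frac{2}{T}\int_0^T \JL_{\y}\f_1(\y,\tau)\int_0^\tau \f_1(\y,s)\,ds\,d\tau,\qquad T=2\pi .
\]

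\textbf{L-Jacobian and the $\tau$-integrals.} By the chain rule for LD-derivatives with $\M=\I$, and since $\phi_i$ is $C^1$,
\[
\JL_{\y}\f_1=\bigl(\phi_1'(J)\cos\tau+\phi_2'(J)\sin\tau\bigr)\nabla_{\rm L}J(\y)^{\rm T}.
\]
This is the step where nonsmoothness enters, and it enters only through the common factor $\nabla_{\rm L}J$. Next,
\[
\int_0^\tau\f_1\,ds=\phi_1\sin\tau+\phi_2(1-\cos\tau).
\]
Multiplying the two and averaging with the elementary integrals $\frac1{2\pi}\int\cos^2=\frac1{2\pi}\int\sin^2=\tfrac12$ and $\frac1{2\pi}\int\sin\cos=\frac1{2\pi}\int\sin=\frac1{2\pi}\int\cos=0$ yields
\[
\g_2(\y)=\bigl(\phi_1'(J)\phi_2(J)-\phi_2'(J)\phi_1(J)\bigr)\nabla_{\rm L}J(\y).
\]
So the second-order averaged system is $\dot\zz=\tfrac{\epsilon^2}{2}\,\kappa(J(\zz))\,\nabla_{\rm L}J(\zz)$, which becomes $\dot\zz=\tfrac12\kappa\nabla_{\rm L}J$ in $t$-time, where $\kappa=\phi_1'\phi_2-\phi_2'\phi_1$.

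\textbf{Sign of $\kappa$ for each law.}
\begin{itemize}
\item Law 1: $\kappa=1$. This gives ascent with the stated sign convention, so either the law is meant with $u_1,u_2$ swapped or a sign is absorbed into $\A$. I would verify the orientation and, if needed, note that swapping $\cos$ and $\sin$ flips the sign of $\kappa$.
\item Law 2: $\kappa=\cos^2J+\sin^2J=1$, with the same sign caveat.
\item Law 3: write $\phi_1=r\sin\theta$ and $\phi_2=r\cos\theta$, where $r=\sqrt{(1-e^{-J})/(1+e^{J})}$ and $\theta=J+2\ln(e^{J}-1)$. Then $\kappa=r^2\theta'$. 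Computing $\theta'=1+2e^J/(e^J-1)=(3e^J-1)/(e^J-1)$ and multiplying by $r^2$ simplifies to a positive constant or a positive function. This constant is the main algebra to check.
\end{itemize}
In each case $\A=-\tfrac12\kappa\,\I$, up to the orientation sign, is positive definite. When $\kappa$ depends on $J$, we instead get a positive scalar function times $\I$, which is still positive definite pointwise.

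\textbf{Main obstacle.} The hard part is not the calculus. It is confirming two things. First, the chain rule legitimately yields the scalar-times-$\nabla_{\rm L}J$ form under (A4); this holds because the outer maps $\phi_i$ are $C^1$, so the LD-derivative recovers $\phi_i'(J)\,J'(\y;\I)$. Second, the sign of $\kappa$ comes out consistently negative for each law, including the Law 3 simplification on the domain $J>0$.
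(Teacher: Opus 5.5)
\textbf{The sign of $\kappa$ is wrong, and this flips your conclusion.} Your reduction to a pair $(\phi_1,\phi_2)$ plus \cref{lemma:g1_zero_simplify_g2} is a sound route, and tidier than the paper's law-by-law evaluation of $\nabla_L f_1\,w_1-\nabla_L w_1\,f_1$. The error is in the one trigonometric average. In
\[
\bigl(\phi_1'\cos\tau+\phi_2'\sin\tau\bigr)\bigl(\phi_1\sin\tau+\phi_2-\phi_2\cos\tau\bigr)
\]
the only terms with nonzero mean are $-\phi_1'\phi_2\cos^2\tau$ and $+\phi_1\phi_2'\sin^2\tau$. Hence
\[
\g_2(\y)=\bigl(\phi_1(J)\phi_2'(J)-\phi_1'(J)\phi_2(J)\bigr)\nabla_{\rm L}J(\y),
\]
which is the negative of your $\kappa$. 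For Law 1 this gives $\kappa=0-1=-1$, and for Law 2 it gives $\kappa=-\sin^2J-\cos^2J=-1$. So $\g_2=-\nabla_{\rm L}J$ and $\dot\zz=-\tfrac12\nabla_{\rm L}J$ in $t$-time, exactly the paper's result; its direct computation for Law 1 gives the bracket $(\cos\tau-1)\nabla_{\rm L}J$, whose mean is $-\nabla_{\rm L}J$.

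As written, your proposal instead concludes that Laws 1 and 2 produce gradient ascent, i.e.\ it argues against the statement. Neither repair you suggest is admissible. Swapping $u_1,u_2$ changes the laws being analyzed, and absorbing a sign into $\A$ contradicts positive definiteness. With the sign corrected, your $\A=-\tfrac12\kappa\,\I$ is positive with no orientation caveat.

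\textbf{Law 3 is left unfinished.} With the corrected sign, $\phi_1\phi_2'-\phi_1'\phi_2=-r^2\theta'$, but you stopped before evaluating $r^2\theta'$. With the phase exactly as printed in the statement, $\theta=J+2\ln(e^J-1)$, you get
\[
r^2\theta'=\frac{3e^J-1}{e^J(1+e^J)} .
\]
This is positive for $J>0$ but not constant, so you would obtain only a state-dependent gain, not a constant positive definite $\A$. The paper's appendix actually uses the phase $\psi_1=e^{J}+2\ln(e^{J}-1)$. Then $\theta'=e^J(e^J+1)/(e^J-1)$ and $r^2\theta'=1$ exactly, giving $\A=\tfrac12$. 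That is evidently the intended law, and your polar-form identity verifies it in one line, replacing the paper's ``lengthy'' computation.
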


 \begin{proof} See Appendix \cref{sec:appendix}.
 \end{proof}

Next we prove that  \eqref{eq:Lyap_proof} has asymptotically stable  equilibrium $\zz^*$ (isolated local minimizer of $J$), as in the smooth gradient system case. With this result in place, since \eqref{eq:Lyap_proof} represents second-order averaging of \eqref{eq:control_affine},
 then \cref{thm:asymptotic_implies_practical} guarantees practical stability of \eqref{eq:control_affine}.  Our proof closely follows the proof outlined in \cite[Theorem 3.1]{shevitz1994lyapunov}.
 
 \begin{theorem}\label{thm:Lyap_stability}
Suppose that assumptions (A2)-(A4) and  (B2) hold.
Then  \eqref{eq:Lyap_proof} has asymptotically stable  equilibrium $\zz^*$.
 \end{theorem}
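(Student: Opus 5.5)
We will use the nonsmooth Lyapunov framework of \cite[Theorem 3.1]{shevitz1994lyapunov} recalled in \cref{subsec:filippov}, with candidate $V(\zz):=J(\zz)-J^*$ on the compact set $D$ of (B2). First we check that $V$ is admissible. By (A4) and \cref{prop:PWC_implies_L-smooth}, $J$ is $PC^1$, hence L-smooth and locally Lipschitz. By (B2), $\alpha_1(\|\zz-\zz^*\|)\le V(\zz)\le\alpha_2(\|\zz-\zz^*\|)$, so $V$ is positive definite and decrescent about $\zz^*$. Regularity of $V$ in the sense of \cite[Definition 2.4]{shevitz1994lyapunov} is not automatic for $PC^1$ functions: $\max$-type kinks are regular, $\min$-type kinks are not. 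We therefore either add regularity of $J$ near $\zz^*$ as a standing requirement, which holds for convex $J$ such as norms, or replace regularity by the chain-rule inclusion valid for L-smooth compositions. The right-hand side $-\A\nabla_{\rm L}J$ is $PWD^{\infty}$ by \cref{prop:LDerivative_PWC_PWD}, so it is measurable and locally bounded, and Filippov solutions exist.

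Next we bound the set-valued derivative \eqref{eq:V_general_stability}. Since the right-hand side is $PWD^\infty$, on each open piece $X_k$ we have $\nabla_{\rm L}J=\nabla J$, and there $\partial_{\rm C}J(\zz)$ is a single gradient. Filippov regularization then gives
\[
\mathcal{F}[-\A\nabla_{\rm L}J](\zz)\subseteq -\A\,\conv\{\nabla J_k(\zz)\}\subseteq -\A\,\partial_{\rm C}J(\zz),
\]
where the first inclusion uses the limits of gradients from adjacent pieces (that is, $\partial_{\rm B}J$) and the second uses \eqref{eq:Clarke}. Consequently every $a\in\dot{\tilde V}(\zz)$ satisfies $a=\xi^{\rm T}v$ for \emph{all} $\xi\in\partial_{\rm C}J(\zz)$, where $v=-\A\eta$ for some $\eta\in\partial_{\rm C}J(\zz)$. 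Taking $\xi=\eta$ gives $a=-\eta^{\rm T}\A\eta\le-\lambda_{\min}(\A_s)\|\eta\|^2\le0$, where $\A_s$ is the symmetric part of $\A$, which is positive definite.

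Hence $\max\dot{\tilde V}(\zz)\le -c\,\min_{\eta\in\partial_{\rm C}J(\zz)}\|\eta\|^2$. Strictness requires $0\notin\partial_{\rm C}J(\zz)$ for $\zz\in D\setminus\{\zz^*\}$, that is, $\zz^*$ is the only Clarke-critical point in $D$. This does not follow from (B2) alone, since $PC^1$ functions can have spurious Clarke-stationary points. We will obtain it by shrinking $D$ to a sublevel set $\{V\le c\}$ near the isolated minimizer that contains no other Clarke-stationary point, stating this as an implicit part of (B2) if necessary. Once $\dot{\tilde V}<0$ away from $\zz^*$ and $\le0$ everywhere, \cite[Theorem 3.1]{shevitz1994lyapunov} gives uniform (asymptotic) stability: sublevel sets are forward invariant, and by compactness, upper semicontinuity of $\partial_{\rm C}J$, and a standard LaSalle/contradiction argument, $V(\zz(t))\to0$.

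The main obstacle is exactly this step: guaranteeing regularity of $J$ together with the absence of nonzero-free Clarke stationary points near $\zz^*$. Attention should be concentrated there, while the inclusion computation above is routine.
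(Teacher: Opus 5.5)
Your argument follows the paper's proof step for step. Both use $V=J-J^*$ and the Shevitz--Paden set-valued derivative. Both use $\mathcal{F}[-\A\nabla_{\rm L}J]\subseteq-\A\,\partial_{\rm C}J$; the paper asserts equality via Paden--Sastry, but only the inclusion is used. Both then derive a bound of the form $-c\,\mathrm{dist}(0,\partial_{\rm C}J)^2$ and get strictness from $0\notin\partial_{\rm C}J$ away from $\zz^*$.

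The two obstacles you isolate are real. They are exactly the two steps the paper asserts without support:
\begin{itemize}
\item that $V$ is ``locally convex'' near $\zz^*$, which it uses to get regularity;
\item that isolatedness of $\zz^*$ gives $0\notin\partial_{\rm C}J(\zz)$ on a punctured neighborhood.
\end{itemize}
Neither follows from (A2)--(A4) and (B2). For the first, $J=|z_1|+|z_2|+\tfrac12\min(|z_1|,|z_2|)$ satisfies (B2) but has concave kinks on $|z_1|=|z_2|$ arbitrarily close to $0$.

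Regularity can be dispensed with, along the lines of your second option. Suppose $J$ is $PC^1$ (as \cref{prop:PWC_implies_L-smooth} asserts) with $C^1$ selections $J_i$. Isolated points of a subset of $\real$ form a countable set. So for a.e.\ $t$, each index $i$ active at $\zz(t)$ has $t$ as a non-isolated point of $\{s: J(\zz(s))=J_i(\zz(s))\}$. Differentiating along such $s$ gives $\frac{d}{dt}J(\zz(t))=\nabla J_i(\zz(t))^{\rm T}\dot\zz(t)$ for every active $i$. Hence $\frac{d}{dt}J(\zz(t))=\xi^{\rm T}\dot\zz(t)$ for every $\xi\in\partial_{\rm C}J(\zz(t))$, with no regularity needed.

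This also repairs a quantifier slip in your bound. In the set \eqref{eq:V_general_stability}, the vector in $\mathcal{F}$ may depend on $\xi$, so ``take $\xi=\eta$'' is not valid for an arbitrary element of $\dot{\tilde V}$. It is valid for the actual derivative, where the single vector $\dot\zz(t)=-\A\eta$ serves every $\xi$. There it gives $-\eta^{\rm T}\A\eta\le-\lambda_{\min}(\A_s)\|\eta\|^2$ without symmetry of $\A$. The paper instead works on the set directly: it takes $\xi_0=\arg\min_{\partial_{\rm C}J}\xi^{\rm T}\A\xi$ and uses $\eta^{\rm T}\A\xi_0\ge\xi_0^{\rm T}\A\xi_0$, which requires $\A$ symmetric.

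The stationarity condition, by contrast, cannot be obtained by shrinking $D$. Without it the statement is false, even for smooth $J$. Take $n=1$ and $J(x)=e^{-1/x^2}\bigl(2+\sin(x^{-3})\bigr)$, $J(0)=0$. It is $C^\infty$, so (A2)--(A4) hold, for example for Law~1. (B2) holds with $\alpha_1(r)=e^{-1/r^2}$ and $\alpha_2=3\alpha_1$. Yet
\[
J'(x)=x^{-4}e^{-1/x^2}\bigl(2x(2+\sin(x^{-3}))-3\cos(x^{-3})\bigr)
\]
is negative where $x^{-3}=2k\pi$ and positive where $x^{-3}=(2k+1)\pi$, for $k\ge1$. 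So equilibria of $\dot z=-\A J'(z)$ accumulate at $0$, and $0$ is stable but not attractive.

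An extra hypothesis must therefore be stated explicitly. One option is $0\notin\partial_{\rm C}J(\zz)$ for $\zz$ in a punctured neighborhood of $\zz^*$. Another is local convexity of $J$ near $\zz^*$, which is the paper's tacit assumption. It yields regularity, and also rules out other stationary points, since a Clarke-stationary point of a convex function is a minimizer. Under such a hypothesis your remaining steps complete the proof: uniform negativity on compact annuli from upper semicontinuity of $\partial_{\rm C}J$, then \cite[Theorem 3.1]{shevitz1994lyapunov}.
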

 
 \begin{proof}
     Let $V(\zz)= J(\zz)-J^*$ be a candidate Lyapanov function. 
     We see from assumption  (B2) that $V(\zz^*)=0$ and $V(\zz)>0$ for $\zz \in D \setminus \{\zz^*\}$, and that $V(\zz)$ is locally convex on a neighborhood $N_{\zz^*}$ around the  isolated minimizer $\zz^*$. It therefore follows from   \cite[Proposition 2.3.6]{clarke1990optimization} that $V$ is regular\footnote{A function is (Clarke) regular \cite{clarke1990optimization} at $\zz$ if $V'(\zz;\dd)=V^{o}(\zz;\dd)$ for all $\dd\in\real^n$, where $V'(\zz;\dd)$ is the one-sided directional derivative and $V^{o}(\zz;\dd)=\limsup_{\w\to\zz, \gamma \to 0^+} [V(\w+\gamma \dd)-V(\w)]/\gamma$ is the Clarke generalized directional derivative \cite{clarke1990optimization}.} at each $\zz\in N_{\zz^*}$. Moreover, as $V(\zz)$ is scalar-valued, it follows from \cite{nesterov2005lexicographic} that an L-gradient (corresponding to any full rank directions matrix) of $V(\zz)$ at $\zz$ is a subgradient (from convex analysis) of $V(\zz)$ at $\zz$, i.e., $(\nabla_{\rm L} V(\zz))^{\rm T} \in  \partial_{\rm L} V(\zz) \subseteq \partial_{\rm C} V(\zz)$.
      
  
     From \cite[Equation 13, Theorem 2.2]{shevitz1994lyapunov}, and given that the function $V$ is time-independent, then $\dot{V}(\zz(t))\in \dot{\Tilde{V}}(\zz(t))$ almost everywhere
     where \cite[Theorem 2.2]{shevitz1994lyapunov}:
\begin{align}\label{eq:Lyapanov_eq1}
         \dot{\Tilde{V}}(\zz(t))
       &= \bigcap_{\xi\in\partial_{\rm C} J(\zz(t))} \xi^{\rm{T}}\mathcal{F}[-\A\nabla_L J](\zz(t)),
     \end{align} 
with  
  $\mathcal{F}$ representing the Filippov set-valued map.
 As described above, since $J(\zz)$ is a scalar function $(\nabla_L J(\zz))^{\rm T} \in \partial_{\rm C} J(\zz)$. Then, based on \cite[Theorem 1, property 6]{paden1987calculus}, and since $J$ is L-smooth (and hence locally Lipschitz continuous) with $\nabla_L J(\zz)=\nabla J(\zz)$ almost everywhere, it follows that \begin{equation}\label{eq.filippovproperties}
     \mathcal{F}[-\A\nabla_L J](\zz)=-\A\mathcal{F}[\nabla_L J](\zz)=-\A\partial_{\rm C} J(\zz).
     \end{equation} 
     Thus, we have 
    \begin{align}\label{eq:Lyapanov_eq2}
         \dot{\Tilde{V}}(\zz(t)) &= \bigcap_{\xi\in\partial_{\rm C} J(\zz(t))} -\xi^{\rm{T}}\A\partial_{\rm C} J(\zz(t)).
     \end{align}
Let $U=\partial_{\rm C} J(\zz)$ (recall from \cref{subsec:LD} that $\partial_{\rm C} J(\zz)$ is convex, hence $U$ is a convex set). Define the quadratic function $g:U\to\real:\xi\mapsto\xi^{\rm{T}}\A\xi$.  We note that the convex quadratic function $g$ has a (global) minimizer on the convex domain $U$, denoted by $\xi_0$, i.e., 
 \begin{align*}
     \xi_0 = \underset{\xi\in U}{\text{arg min }}g(\xi)=\underset{\xi\in\partial_{\rm C} J(\zz(t))}{\text{arg min }}\xi^{\rm{T}}\A\xi.
 \end{align*}
 Then we see that, for any $\xi\in\partial_{\rm C} J(\zz)$, we have
\begin{align*}
       \xi^{\rm{T}}\A \xi_0  &\ge \xi_0^{\rm{T}}\A\xi_0\ge \lambda\|\xi_0\|^2,
\end{align*}
for some $\lambda>0$ (since $\A$ is positive definite), 
and so
\begin{align*}
       \xi^{\rm{T}} \A \xi  &\ge \lambda\|\xi_0\|^2.
\end{align*}
Hence, we can see from \eqref{eq:Lyapanov_eq2} that all elements in the set $\dot{\Tilde{V}}(\zz(t))$ satisfy $-\xi^{\rm{T}} \A \xi \le -\lambda\|\xi_0\|^2$, i.e., 
\begin{equation}\label{eq.tildetilde}
\dot{\Tilde{V}}(\zz(t)) \le -\lambda\|\xi_0\|^2\le 0.
\end{equation}
  Finally,  we show that $\dot{\Tilde{V}}(\zz(t)) < 0$.
Given that $\zz^*$ is an isolated local minimizer of $J$, there exists a neighborhood of $\zz^*$,  $N_{\zz^*}$, such that for any $\zz\in (N_{\zz^*}\cap D)\backslash\{\zz^*\}$, 
 $$0\notin \partial_{\rm C} V(\zz)\Rightarrow 0\notin \partial_{\rm C}J(\zz).$$ 
Then it follows from \eqref{eq:Lyapanov_eq2} that $\dot{\Tilde{V}}(\zz(t))\neq 0$ for $\zz(t) \in (N_{\zz^*}\cap D)\backslash\{\zz^*\}$. Then, from \eqref{eq.tildetilde}, it must be true that  there exists some $\rho>0$ such that $\dot{\Tilde{V}}(\zz(t)) < -\rho$  for  $\zz(t)\in (N_{\zz^*}\cap D)\backslash\{\zz^*\}$. Then the asymptotic stability of $\zz^*$ follows from \cref{thm:asymptotic_implies_practical}.
\end{proof}

\subsection{Extremum seeking control examples}\label{subsec:4.2} 

In this subsection, we provide simulation results for control-affine ESC systems of the form \eqref{eq:control_affine}, which have second-order averaged dynamics of the form \eqref{eq:Lyap_proof} -- such as Laws 1-3, in \cref{prop:ESC_laws}. We demonstrate the ability of said ESC systems to admit nonsmooth objective functions and the ability of their nonsmooth second-order averaged system to capture their behavior, including practical stability. 
We provide simulations for ESC Laws 1-3, in \cref{prop:ESC_laws}, where we use two nonsmooth objective functions, $J_1(x)$ and $J_2(x)$. These two objective functions are chosen to involve multiple compositions of smooth and nonsmooth expressions, causing the presence of multiple nonsmooth points in the domain. 
The formulas for the objective functions, as well as their L-gradients, are given in the examples below. The codes for all the simulations are available in \cite{github_averaging_ESC}.

\begin{example}\label{ex:law1_3}
    The first objective function we consider is:
\begin{equation}\label{eq:objectiveJ1}
    \begin{split}
        J_1(x)  =& |x+1| + |x-2| + |x-5|.
    \end{split}
\end{equation}
The L-gradient of $J_1(x)$ using the sum rule in \eqref{eq:sharp_rules} and the formula in \eqref{eq.LDderivative.abs} is:
\begin{equation}\label{eq:objectiveJ1_Lgrad}
    \begin{split}
        \nabla_L J_1(x)=  
        &\Bigg[\mathrm{fsign}(x+1;1)+\mathrm{fsign}(x-2;1)
        +\mathrm{fsign}(x-5;1)\Bigg].
    \end{split}
    \end{equation}
    We show in \cref{Fig:Law1_Law3_J1} simulation results of Laws 1 and 3 using the objective function $J_1(x)$ with $x_0=\Bar{y}_0=3$, $\omega=300$ and $\M=1$. Note that $J_1(x)$ is nonsmooth at both the initial point $x_0=3$ and the minimum $x^*=2$. Results show both ESC trajectories from Laws 1 and 3 converging to the minimum with practical stabilization, but Law 3 maintains vanishing oscillations. The averaged system converges asymptotically to the minimum.
\end{example}
\begin{figure}[htbp]
    \centering
    \includegraphics[width=0.7\textwidth, keepaspectratio]{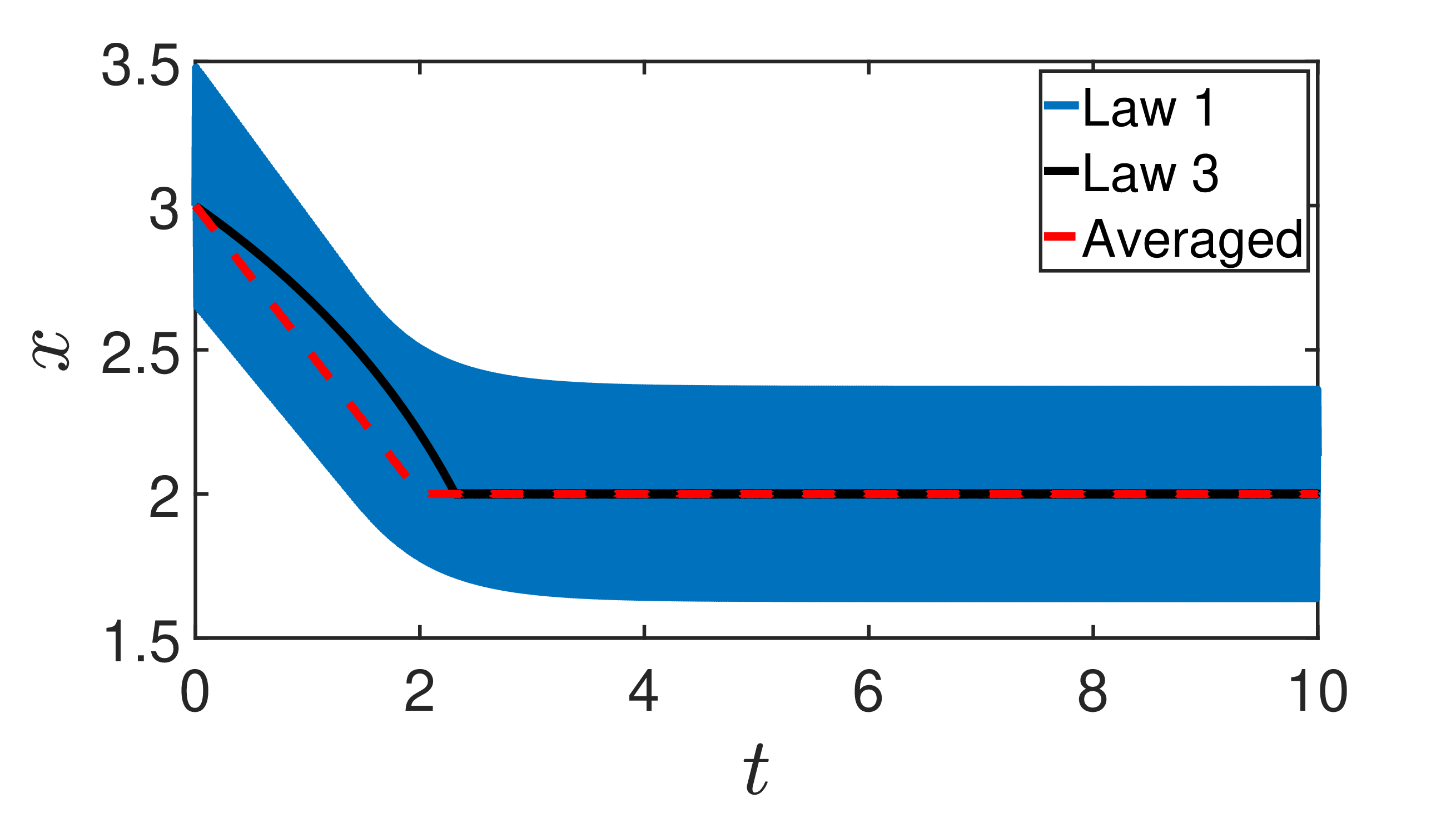}
    \caption{Simulation results for Example \cref{ex:law1_3}. ESC trajectories (Law 1 in blue and Law 3 in black) are captured by their averaged trajectory (red) while seeking minimum of $J_1(x)$.}
    \label{Fig:Law1_Law3_J1}
\end{figure}

\begin{example}\label{ex:law2ESC}
    The second objective function that we use is:
    \begin{equation}\label{eq:objectiveJ2}
    \begin{split}
        J_2(x) =& 5\min((x-2)^4,|x-2|)
        +3\min(0,x-2)+0.25x.
    \end{split}
    \end{equation}
The L-gradient of $J_2(x)$ using the chain rule in \eqref{eq:sharp_rules} and the formula in \eqref{eq:min_function_LD} is:
\small
\begin{equation*}
    \begin{split}
        &\nabla_L J_2(x)\\
        &=  
        \Bigg[0.25+5{\rm \bf slmin}\Big([(x-2)^4\quad 4(x-2)^3 ]),
        [|x-2|\quad\mathrm{fsign}(x-2,1)\Big)+3{\rm \bf slmin}\Big([0\quad 0 ]),[x-2\quad 1]\Big)\Bigg].
    \end{split}
\end{equation*}
\normalsize
We provide a simulation for Law 2 with initial condition $x_0=\Bar{y}_0=5$ using $J_2(x)$; we took $\M=1$. Hence, the ESC and the second-order averaged system will pass through two nonsmoothness and three distinct curves with three distinct derivative characteristics. We also used different $\omega$ to test the behavior. Results provided in \cref{Fig:Law2_J2} demonstrate the ability of the ESC to seek the minimum, stabilizing about it in a practical sense. Moreover, its second-order average is able to capture its behavior and converges asymptotically to the minimum. It is to be noted that with increased $\omega$, the ESC becomes better approximated by its averaged system. 
\end{example}
\begin{figure}[htbp]
    \centering
    \includegraphics[width=0.75\textwidth, keepaspectratio]{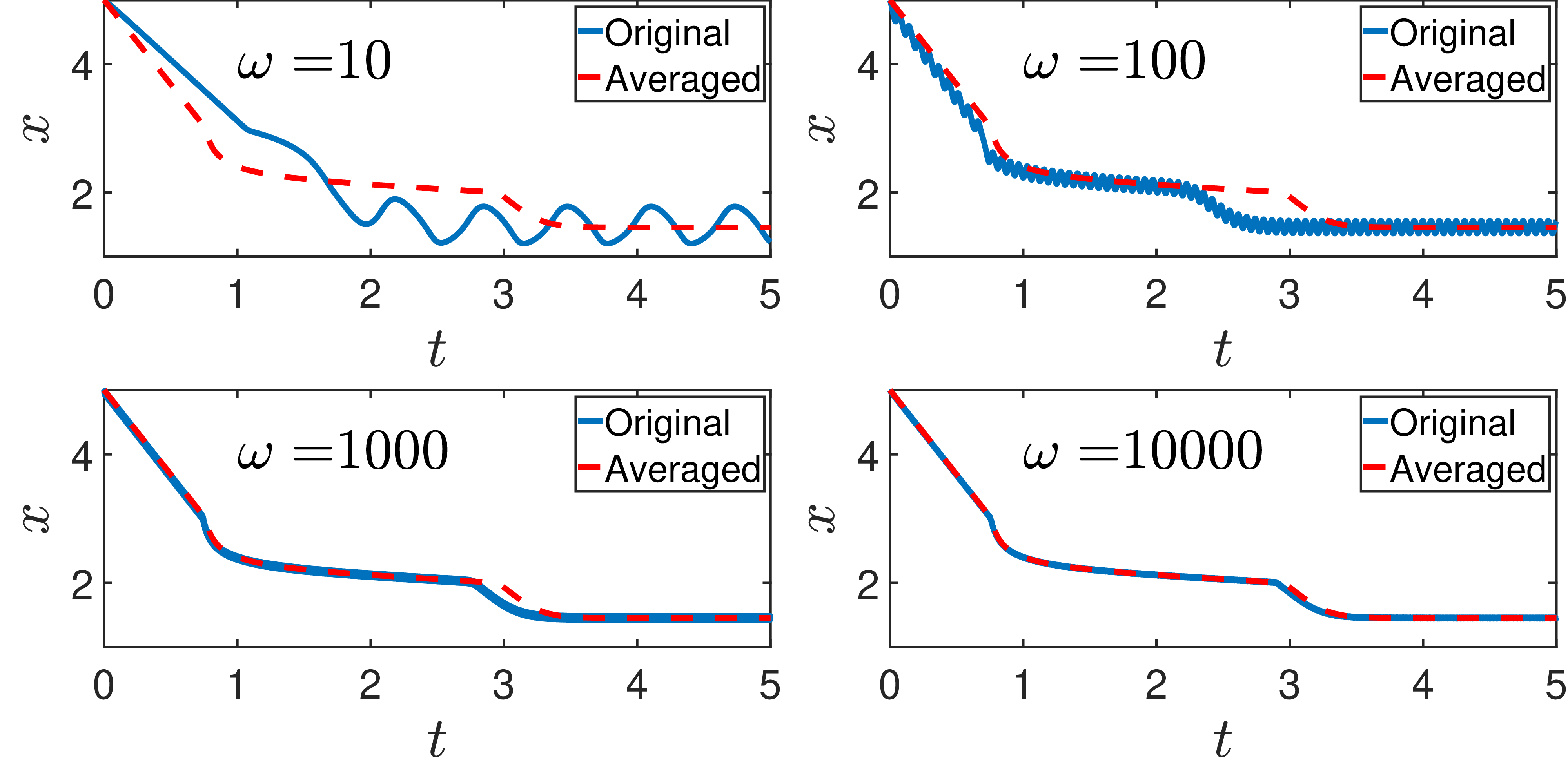}
    \caption{Simulation results for Example \cref{ex:law2ESC} for different $\omega$ values. ESC trajectories (blue) is captured by its averaged trajectories (red) while seeking minimum of  $J_2(x)$.
    }
    \label{Fig:Law2_J2}
\end{figure}
The left and right panels in \cref{Fig:objective_functions} show the objective functions \eqref{eq:objectiveJ1} and \eqref{eq:objectiveJ2} (in blue), respectively. In addition, their gradients are also shown in red. In \cref{Fig:objective_functions}, we also show the optimal points $x^*=1$ and $x^*=1.4565$, where the objective functions $J_1(x)$ and $J_2(x)$ achieve their minimum values.

\begin{figure}[ht]
    \centering
\includegraphics[width=0.7\textwidth, keepaspectratio]{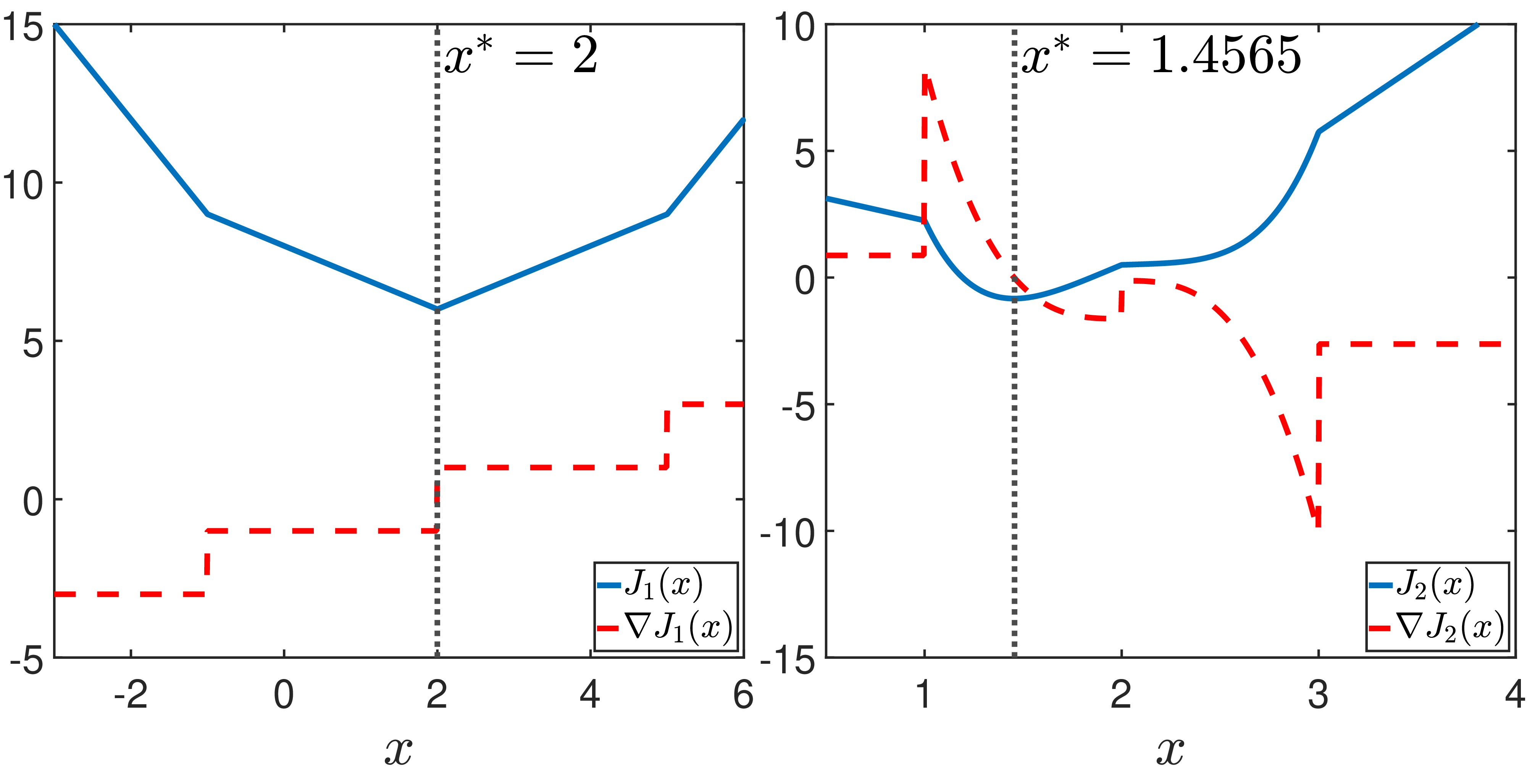}
    \caption{The left and right panels show the objective functions \eqref{eq:objectiveJ1} and \eqref{eq:objectiveJ2} (in blue), respectively. In addition, their discontinuous gradients are also shown in red.}
    \label{Fig:objective_functions}
\end{figure}

\section{Conclusion}
In this paper, we provided a higher-order nonsmooth (L-smooth) averaging theory, with formulas up to second-order averaging, that is analogous to near-identity transformation (classical) averaging. The use of lexicographic differentiation for the first time as a tool from generalized derivatives theory in averaging, was proven useful in both the theoretical analysis and the computations. These results enabled us to provide generalized-gradient-based model-free extremum seeking methods that are operable in the nonsmooth (L-smooth) setting. We demonstrated the effectiveness of many results in this paper via examples and numerical simulations.

\appendix
\section{Proof of \cref{prop:ESC_laws}}\label{sec:appendix}
\subsection{Proving Law 1}
Let $\tau=\omega t$ and $\epsilon = \frac{1}{\sqrt{\omega}}$,  then Law 1 becomes
\begin{align*}
    \frac{dx}{d\tau} = \frac{1}{w}\frac{dx}{dt}
    &= \frac{1}{\omega}(J(x)\sqrt{\omega}\cos{(\omega t)}+\sqrt{\omega}\sin{(\omega t)})  
     = \epsilon (J(x)\cos{(\tau)}+\sin{(\tau)})
    = \epsilon f(x,\tau),
\end{align*}
which is in the averaging-canonical form.
Let $T=2\pi$, and similar to the process in \cref{thm:NonsmoothTransformationValidity}, let $f_1(x,\tau) = f(x,\tau,0)$ and $f_2(x,\tau) = 2\frac{\partial f}{\partial \epsilon}\Big|_{\epsilon=0}(x,\tau,\epsilon)=0$.  Then, the first-order averaging becomes: $g_1(x) = \frac{1}{2\pi}\int_0^{2\pi} J(x)\cos{(s)}+\sin{(s)} ds = 0$.
To calculate the second-order term, $g_2(x)$, we first note that $w_1(x,\tau)=\int_{0}^{\tau} f_1(x,s) - g_1(x) ds 
    = \int_{0}^{\tau} f(x,s,0) ds$.
Using the chain rule in \eqref{eq:sharp_rules} we get
    $\nabla_L f_1(x) 
    = \cos(\tau) \nabla_L J(x)$.
Similarly, 
    $\nabla_L w_1(x) 
    =  \int_{0}^{\tau}\cos(s) \nabla_L J(x)ds$.
Therefore, we have
\begin{align*}
     \nabla_L  f_1 w_1 - \nabla_L w_1  f_1
    =& \cos(\tau) \nabla_L J(x)\int_{0}^{\tau} J(x)\cos{(s)}+\sin{(s)}ds
    -\int_{0}^{\tau} \cos(s) \nabla_L J(x)ds(J(x)\cos{(\tau)}+\sin{(\tau)})\\
    =& \cos(\tau) \nabla_L J(x)(J(x)\sin{(\tau)}-\cos{(\tau)}+1)\\
    &-\sin(\tau) \nabla_L J(x)(J(x)\cos{(\tau)}+\sin{(\tau)})
    -\sin^2(\tau) \nabla_L J(x)\\
    =&\cos(\tau) \nabla_L J(x)-\nabla_L J(x).
\end{align*}
Since $g_1(x)=0$, we see that
     $\nabla_L  g_1 w_1 - \nabla_L  w_1  g_1=0$.
 Therefore, we see that
\begin{align*}
    g_2(x)
    = &\frac{1}{T}\int_0^T  \nabla_L f_1 w_1 - \nabla_L w_1  f_1d\tau= - \nabla_L J(x).
\end{align*}
 Therefore, the second-order averaged system is 
\begin{align*}
    \frac{d\Bar{y}}{d\tau} &=\epsilon g_1 (\Bar{y})+\frac{\epsilon^2}{2!} g_2 (\Bar{y}) = -\frac{1}{2\omega} \nabla_L J(\Bar{y}).
\end{align*}
Finally, we represent the second-order averaged system in terms of $t$ as 
\begin{align}\label{eq:ex_averaged_system_Law1}
    \frac{d\Bar{y}}{dt} &= \omega \frac{d\Bar{y}}{d\tau} = -\omega \frac{1}{2\omega} \nabla_L J(\Bar{y}) = -\frac{1}{2} \nabla_L J(\Bar{y}).
\end{align}
\subsection{Proving Law 2}
We get the system in Law 2 in the averaging canonical form as before
\begin{align*}
    \frac{dx}{d\tau} 
    &= \epsilon (\sin{(J(x))}\cos{(\tau)}+\cos{(J(x))}\sin{(\tau)})=\epsilon f(x,\tau),
\end{align*}
Let $T=2\pi$, $f_1(x,\tau) = f(x,\tau,0)$ and $f_2(x,\tau) = 2\frac{\partial f}{\partial \epsilon}\Big|_{\epsilon=0}=0$.  Then, the first-order averaging becomes:
\begin{align*}
    g_1(x) &= \frac{1}{2\pi}\int_0^{2\pi} \sin{(J(x))}\cos{(s)}+\cos{(J(x))}\sin{(s)} ds = 0.
\end{align*}
Also we have 
    $w_1(x,\tau) = \int_{0}^{\tau} f(x,s,0) ds$, and using the chain rule in \eqref{eq:sharp_rules} we get
\begin{align*}
     \nabla_L f_1 w_1 - \nabla_L w_1  f_1
    =& [\nabla_L J(x)\cos{(J(x))}\cos{(\tau)}-\nabla_L J(x)\sin{(J(x))}\sin{(\tau)}]\\
    &\cdot
    \Big[\int_{0}^{\tau} \sin{(J(x))}\cos(s)+\cos{(J(x))}\sin(s) ds\Big]\\
    &-\Big[\int_{0}^{\tau} \nabla_L J(x)\cos{(J(x))}\cos{(s)}
    -\nabla_L J(x)\sin{(J(x))}\sin{(s)} ds\Big]\\
    &\qquad\cdot[\sin{(J(x))}\cos{(\tau)}+\cos{(J(x))}\sin{(\tau)}]\\
    =& \cos(\tau)\nabla_L J(x)-\nabla_L J(x).
\end{align*}
We note that $f_2(x)=0$ and 
     $\nabla_L g_1 w_1 - \nabla_L w_1 \cdot g_1=0.$ Hence
\begin{align*}
    g_2(x)
    = &\frac{1}{T}\int_0^T [\nabla_L  f_1 w_1 - \nabla_L  w_1 f_1]d\tau= - \nabla_L J(x).
\end{align*}
 Therefore, the second-order averaged system is 
\begin{align*}
    \frac{d\Bar{y}}{d\tau} &=\epsilon g_1 (\Bar{y})+\frac{\epsilon^2}{2!} g_2 (\Bar{y}) = -\frac{1}{2\omega}\nabla_L J(\Bar{y}).
\end{align*}
In terms of $t$: 
\begin{align}\label{eq:ex_averaged_system_Law2}
    \frac{d\Bar{y}}{dt} &= \omega \frac{d\Bar{y}}{d\tau}  = -\frac{1}{2}\nabla_L J(\Bar{y}).
\end{align}
\subsection{Proving Law 3}
Similar to Law 1 and Law 2, we obtain the second-order averaged system associated with Law 3. Let $\psi_1=e^{J(x)}+2\log(e^{J(x)}-1)$ and $\psi_2=\sqrt{\frac{1-e^{-J(x)}}{1+e^{J(x)}}}$, then we see (as done before) that we get the following averaging canonical form:
\begin{align*}
    \frac{dx}{d\tau} &=  \epsilon \Biggr(\psi_2\Bigr(\sin(\psi)u_1+\cos(\psi)u_2\Bigr)\Biggr)
    =\epsilon f(x,\tau).
\end{align*}
 We see that
    $g_1(x) = \frac{1}{T}\int_0^T f(x,s,0) ds=0$, and
\begin{equation}\label{eq:w1_Law3}
    \begin{split}
    w_1(x,\tau)
        &=\psi_2\,\Big(\mathrm{cos}\left(\psi_1\right)
        -\mathrm{cos}\left(t+\psi_1\right)\Big),
    \end{split}
\end{equation}
Using the chain rule in \eqref{eq:sharp_rules} we get
\begin{equation}\label{eq:JL_f_Law3}
\begin{split}
    \nabla_L  f(x,\tau)=& 
    \Bigg[\mathrm{cos}\left(\tau+\psi_1\right)\,
    \psi_2\Big({\mathrm{e}}^{J\left(x\right)} \,\nabla_L J\left(x\right)
    +\frac{2\,{\mathrm{e}}^{J\left(x\right)} \,\nabla_L J\left(x\right)}{{\mathrm{e}}^{J\left(x\right)} -1}\Big)\Bigg]\\
    &+\frac{1}{2}\Biggr[\frac{\mathrm{sin}\left(\tau+\psi_1\right)\,\frac{{\mathrm{e}}^{-J\left(x\right)} \,\nabla_L J\left(x\right)}{{\mathrm{e}}^{J\left(x\right)} +1}}{\psi_2} 
    + \frac{\mathrm{sin}\left(\tau+\psi_1\right)\,\frac{{\mathrm{e}}^{J\left(x\right)} \,{\left({\mathrm{e}}^{-J\left(x\right)} -1\right)}\,\nabla_L J\left(x\right)}{{{\left({\mathrm{e}}^{J\left(x\right)} +1\right)}}^2 }\,}{\psi_2}\Biggr]
\end{split}
\end{equation}
and
\begin{equation}\label{eq:JL_w_Law3}
\begin{split}
    &\nabla_L  w(x,\tau)\\
    &=\frac{1}{{{\left({\mathrm{e}}^{J(x)} +1\right)}}^2 \,{{\left(\frac{{\mathrm{e}}^{-J(x)} \,{\left({\mathrm{e}}^{J(x)} -1\right)}}{{\mathrm{e}}^{J(x)} +1}\right)}}^{\frac{1}{2}} }\\
    &\qquad\cdot\Biggr[\frac{1}{2}\,{\mathrm{e}}^{-J(x)} \,\Big(\mathrm{cos}\left(\psi_1\right)-\mathrm{cos}(t+\psi_1)\Big)
    \left(2\,{\mathrm{e}}^{J(x)} -{\mathrm{e}}^{2\,J(x)} +1\right)\nabla_L J(x)\Biggr]\\
    &\quad-\frac{1}{{\mathrm{e}}^{J(x)} -1}\Biggr[{\mathrm{e}}^{J(x)} \,{\left({\mathrm{e}}^{J(x)} +1\right)}\Big(\mathrm{sin}\left(\psi_1\right)-\mathrm{sin}\left(t+\psi_1\right)\Big)
    \sqrt{\frac{{\mathrm{e}}^{-J(x)} \,{\left({\mathrm{e}}^{J(x)} -1\right)}}{{\mathrm{e}}^{J(x)} +1}}\,\nabla_L J(x)\Biggr].
\end{split}
\end{equation}
 We also note that $f_2(x,\tau)=0$ and 
     $\nabla_L g_1 w_1 - \nabla_L w_1  g_1=0$, which, along with \eqref{eq:w1_Law3}, \eqref{eq:JL_f_Law3} and \eqref{eq:JL_w_Law3}, following lengthy (but systematic) calculations gives that 
\begin{align*}
    g_2(x)
    =& - \nabla_L J(x).
\end{align*}
Therefore, the second-order averaged system in terms of $t$ is:  
\begin{align}\label{eq:ex_averaged_system_Law3}
    \frac{d\Bar{y}}{dt} &= \omega \frac{d\Bar{y}}{d\tau}  = -\frac{1}{2}\nabla_L J(\Bar{y}).
\end{align}

\section*{Acknowledgments}
This material is based upon work supported by the National Science Foundation under Award No. 2318772 and 2318773.
\small
\bibliographystyle{siamplain}
\bibliography{bibliography,references}
\end{document}